\documentclass[twoside,a4paper,reqno,11pt]{amsart} 
\usepackage[top=28mm,right=30mm,bottom=28mm,left=30mm]{geometry}

\usepackage{amsfonts, amsmath, amssymb, mathrsfs, bm, latexsym, stmaryrd, array, hyperref, mathtools, bold-extra, xcolor, enumerate,booktabs}

\newcommand{\wy}{\widehat{y}}
\newcommand{\QQ}{\widetilde{Q}}
\newcommand{\RC}{\mathrm{RC}}
\renewcommand{\a}{\alpha}
\renewcommand{\b}{\beta}
\newcommand{\g}{\gamma}
\newcommand{\e}{\varepsilon}

\newcommand\bbF{\mathbb{F}}
\newcommand{\normeq}{\trianglelefteqslant}

\newcommand{\A}{\mathrm{A}}

\newcommand{\la}{\langle}
\newcommand{\ra}{\rangle}

\newcommand{\leqs}{\leqslant}
\newcommand{\geqs}{\geqslant}
\def\ge{\geqs}
\def\le{\leqs}

\newcommand{\Aut}{\mathrm{Aut}}
\newcommand{\Inn}{\mathrm{Inn}}
\newcommand{\Out}{\mathrm{Out}}

\newcommand{\PSL}{\mathrm{PSL}}
\newcommand{\GL}{\mathrm{GL}}
\newcommand{\PGL}{\mathrm{PGL}}
\newcommand{\SL}{\mathrm{SL}}

\newcommand{\PSp}{\mathrm{PSp}}
\newcommand{\Sp}{\mathrm{Sp}}

\newcommand{\POmega}{\mathrm{P\Omega}}
\newcommand{\LL}{\mathrm{L}}
\newcommand{\UU}{\mathrm{U}}

\newcommand{\PGU}{\mathrm{PGU}}

\newcommand{\GU}{\mathrm{GU}}

\newcommand{\Hol}{\mathrm{Hol}}

\def\Sym{\mathrm{Sym}}
\newcommand{\Sn}{\mathrm{S}}
\newcommand{\An}{\mathrm{A}}

\newcommand{\minPart}{\Gamma_{k, |T|}}

\makeatletter
\newcommand{\imod}[1]{\allowbreak\mkern4mu({\operator@font mod}\,\,#1)}
\makeatother

\newtheorem{theorem}{Theorem} 
\newtheorem*{conj*}{Conjecture}

\newtheorem{corol}{Corollary}

\newtheorem{thm}{Theorem}[section] 
\newtheorem{prop}[thm]{Proposition} 
\newtheorem{lem}[thm]{Lemma}
\newtheorem{cor}[thm]{Corollary}

\newtheorem*{prob*}{Problem}

\theoremstyle{definition}

\begin{document}

\title{Greedy bases and relational complexity of diagonal type groups}
\subjclass[2020]{20B15 (primary), 20P05, 20E45}
\begin{abstract}
  A \emph{base} for a subgroup $G$ of $\Sym(\Omega)$ is a sequence of elements of $\Omega$ with trivial pointwise stabiliser. The size of the smallest base for $G$ is denoted $b(G)$. There is a natural greedy algorithm to compute a base for $G$, and it was conjectured by Cameron in 1999 that there exists an absolute constant $c$ such that if $G$ is primitive then any base returned by this algorithm has size at most $cb(G)$. In this paper we determine the size of 
  every
  base returned by the greedy algorithm when $G$ is a primitive group of diagonal type, and hence prove Cameron's conjecture for these groups. 

  The relational complexity $\RC(G)$ of $G$ is a measure of the way in which the orbits of $G$ on $\Omega^k$ for various $k$ determine the action of $G$ on $\Omega$. Very few precise values of relational complexity are known, and in particular it is not known which primitive groups have relational complexity $3$. In this paper we prove that if $G$ is primitive of diagonal type then $\RC(G) \geqs 4$, that this lower bound is attained by infinitely many such $G$, and that the relational complexity of the groups of diagonal type is unbounded. 
 \end{abstract}

\date{\today}

\author{Hong Yi Huang}
    \address{H.Y. Huang, Department of Mathematics, Southern University of Science and Technology, Shenzhen 518055, Guangdong, P. R. China}
    \curraddr{Alfréd Rényi Institute of Mathematics, Reáltanoda utca 13-15., H-1053, Budapest, Hungary}
    \email{11612012@mail.sustech.edu.cn}

\author{Colva M. Roney-Dougal}
    \address{C.M. Roney-Dougal, School of Mathematics and Statistics, University of St Andrews, St Andrews, UK}
    \email{Colva.Roney-Dougal@st-andrews.ac.uk}

\maketitle

\section{Introduction}

A \textit{base} for a subgroup $G$ of $\Sym(\Omega)$ is a sequence of elements of $\Omega$ with trivial pointwise stabiliser in $G$. For finite $G$ the \textit{base size} $b(G)$ is the minimal size of a base for $G$. This classical invariant of permutation groups has been studied since the 19th century, and gained particular importance in computational group theory \cite{S_compu}, as the running time of many permutation group algorithms is a function of the size of a known base. A base $(\b_1, \ldots, \b_k)$ is \emph{irredundant} if the pointwise stabiliser in $G$ of $\b_1, \ldots, \b_{i+1}$ is a proper subgroup of the pointwise stabiliser of $\b_1, \ldots, \b_i$, for all $i \in \{1, \ldots, k-1\}$. The size of the largest irredundant base is denoted by $I(G)$: it is straightforward to see that $I(G) \leqs b(G) \log n$ (our logarithms are to the base 2 unless otherwise specified). 

In general, $b(G)$ can be as large as $|\Omega| -1$, but much smaller bounds are known for most types of primitive groups.  The problem of determining $b(G)$ has been considered for all five O'Nan--Scott classes. For example, the precise base sizes of primitive groups of diagonal type are determined by the first author \cite{H_diag} (see Theorem~\ref{t:H_diag} below), and so far this is the only O'Nan--Scott family for which the base sizes are known in all cases. 

There are no efficient algorithms
for computing $b(G)$, or  for constructing a base of minimal size. Indeed, Blaha \cite{Blaha} proves that computing a base of size $b(G)$ is NP-hard. To find a base of ``relatively small" size, 
there is a natural greedy algorithm, first systematically studied by Blaha. Let $\beta_1$ be any point in any longest orbit of $G$. Now define a sequence $(\b_i)_{i\geqslant 1}$ of points of  $\Omega$, terminating as soon as a base is reached, where $\b_m$ lies in a longest orbit of the pointwise stabiliser $G_{\b_1, \ldots, \b_{m-1}}$.
The sequence $(\b_i)_{i\geqslant 1}$ is called a \textit{greedy base} for $G$, and the \textit{greedy base size} $\mathcal{G}(G)$ is defined to be the largest size of a greedy base for $G$. Blaha \cite{Blaha} shows that $\mathcal{G}(G)$ is ``small" in the sense that there is an absolute constant $d$ such that
\begin{equation}\label{e:loglog}
    \mathcal{G}(G)\leqslant db(G)\log\log|\Omega|
\end{equation}
for every permutation group $G\leqslant\mathrm{Sym}(\Omega)$. And in the same paper, he proves that for a fixed integer $k$ and a sufficiently large integer $n$, there exists a permutation group $G$ of degree $n$ such that $b(G) = k$ and $\mathcal{G}(G) \geqslant \frac{1}{5}k\log\log n$, so the bound \eqref{e:loglog} is best possible. 

For primitive groups, Cameron conjectured in 1999 \cite{CameronBook} that a substantially tighter bound than \eqref{e:loglog} holds.
\begin{conj*}[Cameron's Greedy Conjecture]
    There is an absolute constant $c$ such that $\mathcal{G}(G)\leqs cb(G)$ for every finite primitive group $G$.
\end{conj*}

Since every greedy base is irredundant, $\mathcal{G}(G) \leqs I(G)$. Building on \cite{GillLodaSpiga}, Kelsey and the second author showed in \cite{KelseyCMRD} that if $G$ is primitive and not large base then $I(G) \leqs 5 \log n$, which automatically bounds $\mathcal{G}(G)$ for these groups,  but does not directly address Cameron's Greedy Conjecture. The conjecture has been proved for many almost simple primitive groups \cite{d_greedy,dR_greedy}, and been proved (with  $c = 1$) for groups of odd order \cite{BdR_odd}.

In this paper, we focus on primitive groups of diagonal type. In this setting, $G\leqs \mathrm{Sym}(\Omega)$ has socle $T^k$, where $T$ is a non-abelian simple group and $k\geqs 2$ is an integer. Here $|\Omega| = |T|^{k-1}$ and 
\begin{equation*}
    T^k\normeq G\leqs T^k.(\Out(T)\times \Sn_k).
\end{equation*}
The subgroup $P\leqs \Sn_k$ induced by the action of $G$ by conjugation on the set of factors of $T^k$ is called the \textit{top group} of $G$. We  define
\begin{equation*}
    Q = \{(1,\dots,1)\sigma : \sigma\in \Sn_k\} \cap G = \Sn_k \cap G.
\end{equation*}
Note that $Q$ is naturally isomorphic to a subgroup of the top group $P$.

For groups of diagonal type, Gill, Lod\'a and Spiga prove in \cite{GillLodaSpiga} that $I(G) \leqs \log |\Omega| + 1$, giving the same upper bound on $\mathcal{G}(G)$.  Our first main theorem is the following, which gives the first O'Nan-Scott family for which $\mathcal{G}(G)$ is known in all cases. 
\begin{theorem}\label{thm:greedy}
    Let $G$ be a primitive group of diagonal type with socle $T^k$ and top group $P$, and let $\ell  = \lceil \log_{|T|} k \rceil$. 
    \begin{itemize}\addtolength{\itemsep}{0.2\baselineskip}
		\item[{\rm (i)}] If $k = 2$, then $\mathcal{G}(G)\in\{3,4\}$, with $\mathcal{G}(G) = 4$ if and only if $T\in\{ \An_5, \An_6\}$ and $G = T^2.(\Out(T)\times \Sn_2)$.
		\item[{\rm (ii)}] If $P\notin \{\An_k, \Sn_k\}$, then $\mathcal{G}(G) = 2$.
		\item[{\rm (iii)}] For $k\geqs 3$, if $P\in\{ \An_k, \Sn_k\}$ then $\mathcal{G}(G)\in\{\ell+1,\ell+2\}$. Moreover, $\mathcal{G}(G) = \ell+2$ if and only if either $k = |T|^\ell$, or $k\in\{|T|^\ell-2,|T|^\ell-1\}$ and $Q = \An_k$.
	\end{itemize}
    In addition, every greedy base for $G$ has size $\mathcal{G}(G)$.
\end{theorem}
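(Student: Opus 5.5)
The plan is to fix the standard model of the diagonal action and then follow the greedy algorithm explicitly, stage by stage. Identify $\Omega$ with the right cosets of $D=\{(t,\dots,t):t\in T\}$ in $T^k$, or equivalently with $T^{k-1}$ via $D(t_1,\dots,t_k)\mapsto (t_1^{-1}t_2,\dots,t_1^{-1}t_k)$. We may take $\beta_1=D$, since $G$ is transitive and the whole of $\Omega$ is the unique longest orbit. Then $G_{\beta_1}=D.(\Out(T)\times P)\cap G$, which acts on the points $D(1,t_2,\dots,t_k)$ as follows: $D\cong\Inn(T)$ conjugates every coordinate simultaneously, outer automorphisms act coordinatewise, and $P$ permutes the $k$ coordinates, the first one included, by the twisted formulas. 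The whole proof then reduces to understanding the orbits of $G_{\beta_1}$ and of its subsequent stabilisers on tuples $(t_1,\dots,t_k)$ modulo left multiplication by the diagonal. The key observation is that the point stabiliser of such a tuple in $\Aut(T)\times \Sn_k$ consists of those $(\alpha,\sigma)$ that permute the multiset of entries compatibly with the automorphism.

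\textbf{Case (ii).} Here $P$ is not $\An_k$ or $\Sn_k$, and I would use the result from \cite{H_diag} (Theorem~\ref{t:H_diag}) that $b(G)=2$. The greedy claim is stronger: I must show that \emph{every} point of a longest $G_{\beta_1}$-orbit has trivial two-point stabiliser. A longest orbit has size $|G_{\beta_1}|$, that is, it is regular, exactly when some regular orbit exists. Since $b(G)=2$, a regular orbit of $G_{\beta_1}$ exists, so every longest orbit is regular and $\mathcal G(G)=2$ with all greedy bases of size $2$.

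\textbf{Cases (i) and (iii).} When $P$ is $\An_k$ or $\Sn_k$, or when $k=2$, there is no regular orbit, and I would compute orbit lengths explicitly. Stabilising $D(1,t_2,\dots,t_k)$ inside $\Inn(T)\times\Sn_k$ amounts to stabilising the tuple $(t_1,\dots,t_k)$ up to diagonal action. To maximise the orbit length, I want to minimise the stabiliser, which means choosing entries with as few repetitions as possible and with trivial common centraliser and no automorphism symmetry.

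After $m$ base points, the stabiliser's action on coordinates is the stabiliser of a partition of $\{1,\dots,k\}$. Its blocks are the sets of coordinates where all chosen tuples agree, so each step refines the partition by at most a factor of $|T|$ per block. The greedy choice at each step refines the partition as evenly as possible into $|T|$ parts, because the orbit length is roughly the multinomial coefficient times the size of the $T$-part. After $j$ steps the blocks therefore have size about $k/|T|^{j}$, and the $\Sn_k$-part dies after $\ell=\lceil\log_{|T|}k\rceil$ refinement steps. This gives $\ell+1$ points, together with possibly one extra point needed to kill the residual $\Aut(T)$ and sign pieces.

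The extra $+1$ occurs in two situations. The first is when the final partition has all blocks of size $1$ but uses every element of $T$ in some block structure, which forces a nontrivial automorphism symmetry; this is the case $k=|T|^\ell$. The second is when, for $Q=\An_k$, the parity constraint lets an odd permutation combined with an automorphism survive; this is the case $k\in\{|T|^\ell-2,|T|^\ell-1\}$. For $k=2$ the computation is direct: $\ell=1$, and one handles the small exceptional groups $\An_5$ and $\An_6$ with full $\Out(T)$ via the known fact that $\Aut(T)$ has no regular orbit on suitable pairs. I would cite any relevant lemma for these, or do a direct check.

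\textbf{Main obstacle.} The hardest step is to prove that \emph{every} greedy choice, not merely the one I construct, leads to the same length. This requires a precise formula for the orbit length as a function of the partition and of the pattern of tuple entries, followed by a proof that the maxima are attained only by near-equitable partitions with generic entries. That in turn requires an inequality on multinomial coefficients, and in the boundary cases ($k$ near $|T|^\ell$) a careful comparison of the competing orbit sizes that involve $\Out(T)$ and the parity of $Q$. I would first isolate a lemma giving the longest orbit of a partition stabiliser acting on the functions to $T$, and then apply it inductively along the base.
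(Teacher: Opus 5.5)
\textbf{Part (ii).} Your argument is correct and is the paper's: $b(G)=2$ gives a regular $G_{\beta_1}$-orbit, so every longest orbit is regular.

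\textbf{Part (i).} This is not proved. For $k=2$ the partition picture is empty, and ``the computation is direct'' hides the whole difficulty. Knowing $b(G)=3$ is not enough. You must show that for \emph{every} $x\in T$ minimising $|G_{1,x}|$, there is $y$ with $I_{\Aut(T)}(x)\cap I_{\Aut(T)}(y)=1$. Which $x$ are minimisers depends on $G\cap\Aut(T)$ and on which elements $\varphi\sigma$ lie in $G$.

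The paper's key idea is to use minimality only through the inequality $|I_{\Aut(T)}(x)|\le|I_{\Aut(T)}(y)|\cdot|\Out(T)|$ (Lemma~\ref{l:k=2_stabiliser}(iii)). It then uses a fixed-point-ratio count showing that a random conjugate $y^g$ works once $\widetilde{Q}(T,y)<1$ (Lemma~\ref{l:k=2_probability}). Verifying that inequality requires:
\begin{itemize}
\item a careful choice of $y$, namely generators of irreducibly acting cyclic tori, with a separate construction for $\POmega^+_{2m}(q)$;
\item class-size bounds for all prime-order elements of $I_{\Aut(T)}(y)$;
\item a direct analysis of $\LL_2(q)$;
\item machine computation for the groups in $\mathcal{A}$ and for $\An_5,\An_6$;
\item the results of \cite{FHLR_Saxl} that every pair of points extends to a base of size $3$ for alternating and sporadic $T$.
\end{itemize}
Nothing in your plan substitutes for this.

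\textbf{Part (iii).} Your claim that each greedy step refines ``as evenly as possible'' fails at the first step when $k\equiv 0,\pm1,\pm2 \pmod{|T|}$. There, a point $D\mathbf{t}$ with $\mathcal{P}^{\mathbf{t}}\sim\Gamma_{k,|T|}$ is also fixed, together with $D$, by elements $(x,\dots,x)\sigma$ with $x$ ranging over $T$ or over some $C_T(z)$. Hence its two-point stabiliser exceeds $2|Q_{(\Gamma_{k,|T|})}|\ge|Q_{(\Sigma)}|$, and the greedy second point has the uneven type $\Sigma$ (Lemma~\ref{l:greedy_two_stab}).

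Your ``blocks where all tuples agree'' description tacitly assumes the two-point stabiliser equals $Q_{(\Sigma)}$ with no automorphism component. Realising that requires a subset of $T$ of size between $3$ and $|T|-3$ with trivial setwise stabiliser in $\Hol(T)$, which is \cite[Theorem 4]{H_diag}.

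This forced unevenness is exactly what produces the extra point.
\begin{itemize}
\item For $k=|T|^\ell$, $\Sigma$ has two parts of size $|T|^{\ell-1}+1$. After $\ell+1$ points two blocks of size $2$ remain, and the double transposition lies in $\An_k\le Q$.
\item For $k\in\{|T|^\ell-2,|T|^\ell-1\}$, a single block of size $2$ remains, and its transposition survives if and only if $Q=\Sn_k$.
\end{itemize}
So your ``all blocks of size $1$ with automorphism symmetry'' account of $k=|T|^\ell$ is not what happens. Your parity mechanism also runs the wrong way. The paper's proof (Proposition~\ref{p:|T|+1}, consistent with Corollary~\ref{corol:GgBg}) gives $\mathcal{G}(G)=\ell+2$ in that case when $Q=\Sn_k$, so the $Q=\An_k$ in the displayed statement is evidently a misprint.

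Finally, your refinement argument does not cover the range $3\le k\le|T|$. There $G_{\beta_1}$ often does have a regular orbit, contrary to your remark. The paper handles this range via Theorem~\ref{t:H_diag} and \cite[Proposition 3.5]{FHLR_Saxl}.
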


Combining Theorem~\ref{thm:greedy} with the values of $b(G)$ found in \cite{H_diag} (see Theorem~\ref{t:H_diag}), we get the following corollary.

\begin{corol}\label{corol:GgBg}
    Let $G$ be a primitive group of diagonal type with socle $T^k$ and top group $P$, and let $\ell  = \lceil \log_{|T|} k \rceil$.  Then $\mathcal{G}(G) \in \{b(G),b(G)+1\}$, with $\mathcal{G}(G) = b(G)+1$ if and only if $P\in\{\An_k,\Sn_k\}$ and one of the following holds:
    \vspace{1mm}
		\begin{itemize}\addtolength{\itemsep}{0.2\baselineskip}
			\item[{\rm (a)}]  $k = |T|^2-2$, $Q = \Sn_k$, and either $T\notin\{\An_5,\An_6\}$ or $G\ne T^k.(\Out(T)\times \Sn_k)$: here $\mathcal{G}(G) = 4$;
            \item[{\rm (b)}] $k = |T|^\ell-2$ with $\ell\geqs 3$ and $Q = \Sn_k$: here $\mathcal{G}(G) = \ell+2$;
            \item[{\rm (c)}] $k = |T|^\ell$ with $\ell \geqs 2$ and $Q = \An_k$: here $\mathcal{G}(G) = \ell+2$.
		\end{itemize}
    In particular, for all $b\geqs 3$, there exist infinitely many primitive groups $G$ of diagonal type such that $b(G) = b < \mathcal{G}(G)$.
\end{corol}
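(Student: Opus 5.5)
The corollary follows by comparing Theorem~\ref{thm:greedy} with Theorem~\ref{t:H_diag}, case by case, using the same parameters $k$, $T$, $P$, $Q$ and $\ell = \lceil \log_{|T|} k\rceil$. Since $b(G)\leqs \mathcal{G}(G)$ always holds, it suffices in each case to check that $\mathcal{G}(G)-b(G)\in\{0,1\}$ and to find exactly when the difference is $1$.

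First, the cases where $P\notin\{\An_k,\Sn_k\}$.
\begin{itemize}
\item If $k=2$, Theorem~\ref{t:H_diag} and Theorem~\ref{thm:greedy}(i) should give the same value. Both are $4$ exactly when $T\in\{\An_5,\An_6\}$ and $G=T^2.(\Out(T)\times\Sn_2)$, and $3$ otherwise. So $\mathcal{G}(G)=b(G)$ here.
\item If $k\geqs 3$, Theorem~\ref{thm:greedy}(ii) gives $\mathcal{G}(G)=2$. Since $b(G)\geqs 2$ for any nontrivial group whose point stabiliser is nontrivial, we get $\mathcal{G}(G)=b(G)=2$.
\end{itemize}
Hence the defect can occur only when $P\in\{\An_k,\Sn_k\}$ and $k\geqs 3$.

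Next, the main case $P\in\{\An_k,\Sn_k\}$, $k\geqs 3$. I will read off from Theorem~\ref{t:H_diag} the exact value of $b(G)$ as a function of $k$ relative to $|T|^\ell$ and of whether $Q=\An_k$ or $Q=\Sn_k$. I expect it to take values in $\{\ell,\ell+1,\ell+2\}$, with the exceptional values at $k\in\{|T|^\ell-2,|T|^\ell-1,|T|^\ell\}$, plus the small exception at $\ell=2$ involving $T\in\{\An_5,\An_6\}$ and the full group $T^k.(\Out(T)\times\Sn_k)$. I then tabulate the following subcases against Theorem~\ref{thm:greedy}(iii):
\begin{itemize}
\item $k\leqs |T|^\ell-3$;
\item $k=|T|^\ell-2$, split by $Q$;
\item $k=|T|^\ell-1$, split by $Q$;
\item $k=|T|^\ell$, split by $Q$.
\end{itemize}
Theorem~\ref{thm:greedy}(iii) gives $\mathcal{G}(G)=\ell+2$ precisely when $k=|T|^\ell$, or when $k\in\{|T|^\ell-2,|T|^\ell-1\}$ with $Q=\An_k$; otherwise it gives $\ell+1$. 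In each row I compute the difference. The rows with difference $1$ should be exactly:
\begin{itemize}
\item $k=|T|^\ell$ with $Q=\An_k$, giving (c);
\item $k=|T|^\ell-2$ with $Q=\Sn_k$, giving (a) when $\ell=2$ and (b) when $\ell\geqs 3$.
\end{itemize}
In (a), the excluded groups are those for which Theorem~\ref{t:H_diag} already gives the larger base size. Here $\mathcal{G}(G)$ depends only on $k$ and $Q$, while $b(G)$ depends on the sharper form in Theorem~\ref{t:H_diag}. So the main obstacle is bookkeeping. I must make sure the boundary conventions for $\ell$ match in the two theorems, and handle the $\ell=1$ and $\ell=2$ edge cases exactly (for instance $k=|T|^\ell-2$ with $\ell=1$ is ruled out by the hypothesis on $P$, or is absorbed). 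I must also correctly place the $\An_5$, $\An_6$ exceptions.

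Finally, the ``in particular'' statement. Fix $b\geqs 3$ and set $\ell=b-1\geqs 2$. For each non-abelian simple $T$, take $k=|T|^\ell$ and
\[
G=T^k.\Sn_k\cap\bigl(T^k.(\Out(T)\times\An_k)\bigr)\cong T^k.\An_k .
\]
This $G$ is primitive of diagonal type, since $\An_k$ is transitive on the factors, and it has $Q=\An_k$. By (c), $\mathcal{G}(G)=\ell+2=b(G)+1$, so $b(G)=\ell+1=b$. Alternatively, for $\ell\geqs 3$ one can use (b). Letting $T$ range over infinitely many simple groups gives infinitely many examples for each $b\geqs 3$.
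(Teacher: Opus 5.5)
\textbf{Verdict: same route as the paper, but the one substantive step is asserted rather than checked, and it fails from the premise you state.} Your approach matches the paper's, which gets the corollary just by comparing Theorem~\ref{thm:greedy} with Theorem~\ref{t:H_diag}. Your construction for the final sentence ($\ell=b-1$, $k=|T|^\ell$, $G=T^k{:}\An_k$, so $Q=\An_k$) is also fine. The substantive step is the row-by-row comparison for $P\in\{\An_k,\Sn_k\}$, $k\geqs 3$. You assert its outcome (``should be exactly'') without doing it.

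You quote Theorem~\ref{thm:greedy}(iii) as giving $\mathcal{G}(G)=\ell+2$ for $k\in\{|T|^\ell-2,|T|^\ell-1\}$ when $Q=\An_k$. That is how the introduction prints it, but it is a misprint. Proposition~\ref{p:|T|+1}, where the result is proved, has $Q=\Sn_k$. Its proof shows why: after $\ell+1$ greedy points the stabiliser is $Q_{(\Pi_\ell)}$, where $\Pi_\ell$ has a unique part of size $2$ and all other parts of size at most $1$. That stabiliser is trivial if and only if $Q=\An_k$.

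If you tabulate with the version you quoted, three things go wrong:
\begin{itemize}
\item The row $k=|T|^\ell-1$, $Q=\Sn_k$ gives $\mathcal{G}(G)=\ell+1<\ell+2=b(G)$ by Theorem~\ref{t:H_diag}(iii)(b). This is impossible, since every greedy base is a base.
\item The defects would appear at $k\in\{|T|^\ell-2,|T|^\ell-1\}$ with $Q=\An_k$.
\item The row $k=|T|^\ell-2$, $Q=\Sn_k$ would show no defect at all.
\end{itemize}
So your list of defect rows does not follow from your premises; it seems to have been read off the statement rather than derived.

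With the corrected input the check is quick. If $\ell\geqs 2$ then $k\geqs |T|+1$, so $\An_k\leqs Q$ by \eqref{eq:wreath}, and splitting by $Q\in\{\An_k,\Sn_k\}$ is legitimate. The rows are then:
\begin{itemize}
\item $k\leqs |T|^\ell-3$: both invariants equal $\ell+1$.
\item $k=|T|^\ell-1$: both equal $\ell+2$ if $Q=\Sn_k$, and both equal $\ell+1$ if $Q=\An_k$.
\item $k=|T|^\ell$: $\mathcal{G}(G)=\ell+2$, while $b(G)=\ell+2$ if and only if $Q=\Sn_k$. This gives (c).
\item $k=|T|^\ell-2$: both equal $\ell+1$ if $Q=\An_k$. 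If $Q=\Sn_k$ then $\mathcal{G}(G)=\ell+2$ and $b(G)=\ell+1$, except in the situation of Theorem~\ref{t:H_diag}(iii)(c), where $\ell=2$. This gives (a) and (b).
\end{itemize}

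The defect at $|T|^\ell-2$ exists only because Theorem~\ref{t:H_diag}(iii)(b) lists $|T|-2$ rather than $|T|^\ell-2$. For $\ell=1$ these coincide, and together with (iii)(a) this is why $\ell=1$ gives no defect. It has nothing to do with the hypothesis on $P$ ruling cases out.

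Three smaller corrections:
\begin{itemize}
\item Theorem~\ref{t:H_diag}(iii) gives $b(G)\in\{\ell+1,\ell+2\}$, not $\{\ell,\ell+1,\ell+2\}$.
\item $k=2$ is not a case of $P\notin\{\An_k,\Sn_k\}$, since $P\leqs \Sn_2$ there. Your conclusion $\mathcal{G}(G)=b(G)$ for $k=2$ is still right.
\item Primitivity of your example needs $P=\An_k$ to be primitive on $[k]$, not just transitive. This holds for $k\geqs 3$.
\end{itemize}
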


If a transitive group $G$ has base size $2$, then in particular $G$ has a regular suborbit, and so $\mathcal{G}(G) = 2$ as well. Combining this observation with Corollary~\ref{corol:GgBg} proves
Cameron's Greedy Conjecture for primitive groups of diagonal type.

\begin{corol}
    Let $G$ be a primitive group of diagonal type. Then $\mathcal{G}(G)\leqs\frac{4}{3}b(G)$, with equality attained infinitely often.
\end{corol}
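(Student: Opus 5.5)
We will derive the statement by comparing the two corollaries above: Corollary~\ref{corol:GgBg} for the case $b(G)\geqs 3$, and the remark on regular suborbits for the case $b(G)=2$. Since $G$ is primitive of diagonal type it is not regular, so $b(G)\geqs 2$, and we split into the cases $b(G)=2$ and $b(G)\geqs 3$.

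\textbf{The case $b(G)=2$.} Here $G$ has a regular suborbit. This is a longest orbit of a point stabiliser, so every greedy base has size $2$. Hence $\mathcal{G}(G)=2=b(G)\leqs\frac43 b(G)$.

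\textbf{The case $b(G)\geqs 3$.} Corollary~\ref{corol:GgBg} gives $\mathcal{G}(G)\leqs b(G)+1$. For $b(G)\geqs 3$ we have $b(G)+1\leqs \frac43 b(G)$, since this inequality is equivalent to $b(G)\geqs 3$. Equality in $\mathcal{G}(G)\leqs\frac43 b(G)$ therefore forces both $b(G)=3$ and $\mathcal{G}(G)=b(G)+1=4$.

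\textbf{Equality is attained infinitely often.} It suffices to exhibit infinitely many $G$ with $b(G)=3$ and $\mathcal{G}(G)=4$. Corollary~\ref{corol:GgBg}(a) supplies these. Take any non-abelian simple $T\notin\{\An_5,\An_6\}$, set $k=|T|^2-2$, and choose $G$ with $P=Q=\Sn_k$, for instance $G=T^k.\Sn_k$. Then (a) gives $\mathcal{G}(G)=4=b(G)+1$, so $b(G)=3$. There are infinitely many such $T$, which gives the infinite family. Alternatively, the final sentence of Corollary~\ref{corol:GgBg} with $b=3$ yields infinitely many $G$ with $b(G)=3<\mathcal{G}(G)$, and then $\mathcal{G}(G)\leqs b(G)+1$ forces $\mathcal{G}(G)=4=\frac43 b(G)$.

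The only step needing any care is the consistency check that in case (a) the value $\mathcal{G}(G)=4$ indeed corresponds to $b(G)=3$ rather than some other base size. This follows directly from the equality $\mathcal{G}(G)=b(G)+1$ asserted in Corollary~\ref{corol:GgBg}, so the corollary itself requires no further work.
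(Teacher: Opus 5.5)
Your proof is correct and is the paper's argument: $b(G)=2$ is handled by the regular-suborbit observation, and otherwise $\mathcal{G}(G)\leqs b(G)+1\leqs\frac43 b(G)$ by Corollary~\ref{corol:GgBg}, with equality forcing $b(G)=3$ and $\mathcal{G}(G)=4$, which that corollary (e.g.\ part (a), or its final sentence with $b=3$) realises infinitely often. You were right to take the equality case from Corollary~\ref{corol:GgBg} rather than from Theorem~\ref{thm:greedy}(iii): the latter's printed condition ``$Q=\An_k$'' for $k\in\{|T|^\ell-2,|T|^\ell-1\}$ should read $Q=\Sn_k$, as Proposition~\ref{p:|T|+1} shows.
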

 
In a different direction, relational complexity is a topic connecting permutation groups with the study of homogeneous structures in model theory, going back to pioneering work of Lachlan and Cherlin \cite{LachlanICM, CherlinGelfand}. 
For much more background, including an explanation of the model-theoretic connection, see \cite{GLS_binary}. Let $s \leqs t$ be positive integers, and let $\Lambda = (\lambda_1, \ldots, \lambda_t), \Sigma = (\sigma_1, \ldots, \sigma_t) \in \Omega^t$. Then $\Lambda$ and $\Sigma$ are \emph{$s$-subtuple complete} with respect to a subgroup $G$ of $\Sym(\Omega)$, and we write $\Lambda \sim_s \Sigma$, if for every subset of $s$ indices $i_1, \ldots, i_s$ there exists an element $g \in G$ such that $\lambda_{i_j}^g = \sigma_{i_j}$ for all $j \in \{1, \ldots, s\}$. The \emph{relational complexity} of $G$, denoted $\RC(G)$, is the smallest $s \geqs 2$ such that for all $t \geqs s$ and all $\Lambda, \Sigma \in \Omega^t$, if $\Lambda \sim_s \Sigma$ then $\Lambda \in \Sigma^G$. 

Cherlin conjectured in \cite{CherlinGelfand} a classification of the finite primitive permutation groups $G$ for which $\RC(G) = 2$, and in a dramatic breakthrough in \cite{GLS_binary}, Gill, Liebeck and Spiga proved this conjecture: the only examples are the symmetric group in its natural action,  or of affine type. The alternating group $\An_n$ in its natural action has relational complexity $n - 1$, so almost simple groups can achieve all possible relational complexities. It is well known (see for example \cite[(1.1) and (1.2)]{GillLodaSpiga}) that $\RC(G) \leqs I(G)+1$, so the previously-mentioned bounds on $I(G)$ immediately yield bounds on $\mathcal{G}(G)$.  Recently, Gill asked what lower and upper bounds could be placed on the relational complexity of groups in each of the remaining O'Nan--Scott classes. In this paper we answer this question completely for primitive groups of diagonal type. 

Firstly, we show that there are no diagonal type groups with relational complexity $3$. 
\begin{theorem}\label{thm:RC4}
    Let $G$ be a primitive group of diagonal type. Then $\mathrm{RC}(G) \geqs 4$.
\end{theorem}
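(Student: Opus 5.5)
The plan is to exhibit, for every diagonal type group $G$ with socle $T^k$, two $4$-tuples $\Lambda,\Sigma\in\Omega^4$ with $\Lambda\sim_3\Sigma$ but $\Lambda\notin\Sigma^G$. This gives $\RC(G)>3$. It suffices to work in the largest group $W=T^k.(\Out(T)\times\Sn_k)$. If we use $W$ to show non-conjugacy and only $T^k$ (together with diagonal inner automorphisms, which lie in $T^k$) to show $3$-subtuple completeness, then the example works for every $G$ with $T^k\normeq G\leqs W$. We also use the general fact $\RC(G)\geqs 3$, which follows from the Gill--Liebeck--Spiga classification since diagonal groups are neither affine nor natural symmetric groups.

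We identify $\Omega$ with the right cosets $D(t_1,\ldots,t_k)$ of the diagonal subgroup $D$, and restrict to points of the form $[x]:=D(1,x,1,\ldots,1)$ with $x\in T$. For $k=2$ this is $\Omega=T$, with $(a,b)\in T^2$ acting by $x\mapsto a^{-1}xb$, $\Aut(T)$ acting diagonally, and the swap acting as $x\mapsto x^{-1}$ (up to the identification). The key invariant of a $4$-tuple $(x_1,x_2,x_3,x_4)$ is the $\Aut(T)$-class of $x_1x_2^{-1}x_3x_4^{-1}$, possibly merged with the class of its inverse to absorb the swap. Under $x\mapsto a^{-1}x^\varphi b$ this element becomes $a^{-1}(x_1x_2^{-1}x_3x_4^{-1})^\varphi a$. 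For general $k$, the $W$-invariant data of a tuple of points $[x_i]$ includes the set of coordinates on which the representatives can be chosen to differ, namely $\{1,2\}$. So any element of $W$ mapping $\Lambda$ to $\Sigma$ must stabilise $\{1,2\}$ modulo the part acting trivially, and the problem reduces to the $k=2$ computation with the same invariant.

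For the construction, fix non-commuting $a,b\in T$ and set $\Lambda=([1],[a],[b],[ab])$ and $\Sigma=([1],[a],[b],[ba])$. Triples are normalised as follows. A triple $(x_1,x_2,x_3)$ in the $T^2$-action can be moved to $(1,x_2x_1^{-1},x_3x_1^{-1})$ up to simultaneous conjugation, so two triples with the same first point lie in one orbit if the corresponding pairs are simultaneously conjugate. This handles the triples that contain the common first point:
\begin{itemize}
\item $(a,ab)$ and $(a,ba)$ are simultaneously conjugate by $a^{-1}$;
\item $(b,ab)$ and $(b,ba)$ are simultaneously conjugate by $b$.
\end{itemize}
The triple omitting the last point is identical in $\Lambda$ and $\Sigma$. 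The remaining triple, $(a,b,ab)$ versus $(a,b,ba)$, must be handled by normalising at a different point and using both left and right multiplication. Normalising on the left at the first entry, the triples become $(1,a^{-1}b,b)$ and $(1,a^{-1}b,a^{-1}ba)$, and these are simultaneously conjugate by $a$. On the non-conjugacy side, the invariants are $a^{-2}$ for $\Lambda$ and the commutator $[a,b^{-1}]$ (up to inversion) for $\Sigma$. So it remains to choose $a,b$ with $a^{-2}$ not $\Aut(T)$-conjugate to $[a,b^{-1}]^{\pm1}$, for instance $a$ an involution, so that $a^{-2}=1$ while the commutator is nontrivial.

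I expect the main obstacle to be the bookkeeping for general $k$ and general $G$. First, the completeness elements must really lie in $T^k$, in which case they lie in every $G$. Second, the invariant must be verified to be genuinely $W$-invariant, including when $\Sn_k$ moves coordinates $1,2$ to other coordinates, and including the identification of the swap with inversion. If the sample triple check fails, I would fall back to choosing $a,b$ inside a small subgroup, such as a dihedral subgroup generated by two involutions, where the triple orbits can be computed explicitly.
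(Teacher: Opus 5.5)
Your tuples are essentially the paper's. Every point differs from $D$ in one coordinate, and your $[ab]$, $[ba]$ play the role of the paper's $\delta_1=D(xy^{-1},1,\dots,1)$ and $\delta_2=D(y^{-1}x,1,\dots,1)$. Your $3$-subtuple completeness goes through for every $k$ using elements of $T^k$, with one slip. In the last triple, $(a^{-1}b,b)$ and $(a^{-1}b,a^{-1}ba)$ are not simultaneously conjugate by $a$, since that would force $ab=ba$. The correct map is $t\mapsto ba^{-1}tb^{-1}a$, induced by $(ab^{-1},b^{-1}a,ab^{-1},\dots,ab^{-1})\in T^k$; this is the paper's element $(y^{-1}x,xy^{-1},\dots,xy^{-1})$ with the distinguished coordinate moved. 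Non-conjugacy is also fine for $k\geqs 3$, and for $k=2$ with $P=1$. In those cases anything fixing $[1],[a],[b]$ acts on the distinguished coordinate as an automorphism centralising $a$ and $b$, so it fixes $[ab]\neq[ba]$.

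The genuine gap is $k=2$ with $P=\Sn_2$. Your claimed invariant is not preserved by the swap $\sigma\colon x\mapsto x^{-1}$: it sends $x_1x_2^{-1}x_3x_4^{-1}$ to $x_1^{-1}x_2x_3^{-1}x_4$, not to a conjugate of $(x_1x_2^{-1}x_3x_4^{-1})^{\pm1}$. The stabiliser of the point $1$ consists of the maps $x\mapsto x^\varphi$ and $x\mapsto (x^\varphi)^{-1}$ with $\varphi\in\Aut(T)$. Hence $\Lambda\in\Sigma^G$ if and only if some $\varphi\sigma\in G$ has $\varphi$ inverting both $a$ and $b$: such an element fixes $1,a,b$ and sends $ab\mapsto((ab)^\varphi)^{-1}=ba$. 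This breaks your choices in three ways.
\begin{itemize}
\item Your fallback fails outright. If $a$ and $b$ are both involutions then $\sigma$ itself maps $\Lambda$ onto $\Sigma$, so it fails whenever $\sigma\in G$.
\item Your main choice, with $a$ an involution, needs a condition you never verify: no $\varphi$ with $\varphi\sigma\in G$ centralises $a$ and inverts $b$.
\item For $T=\An_5$ and $G=W_0$ no choice of $a,b$ works, because every non-commuting pair in $\An_5$ is inverted by a single element of $\Sn_5$. For generating pairs this is the trace argument in $\PGL_2(5)\cong\Sn_5$. For pairs generating a dihedral group of order $6$ or $10$, or $\An_4$, it follows by inspection.
\end{itemize}
The missing idea is to choose $a,b$ with $G_{1,a,b}\leqs\Hol(T)$. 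The paper ensures this by taking $(D,D(x,1),D(y,1))$ to be a base of $G$, which exists by Theorem~\ref{t:H_diag}(i) unless $T\in\{\An_5,\An_6\}$ and $G=W_0$. It settles $T\in\{\An_5,\An_6\}$ with $k=2$ separately by computer.
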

This bound is best possible, as there are infinitely many primitive groups $G$ of diagonal type with $\RC(G) = 4$ (see Proposition \ref{p:RC4_inf}).

Secondly, we show that the relational complexity of the primitive groups $G$ of diagonal type is unbounded. Recall that Gill, Lod\`a and Spiga proved in \cite{GillLodaSpiga} that each such $G$ of degree $n$ satisfies $\RC(G) \leqs \log n + 1$. We prove a lower bound that occurs infinitely often that is asymptotically close to this.

\begin{theorem}\label{thm:RClog}
There are infinitely many values of $n$ for which there exists at least one primitive group $G$ of diagonal type of degree $n$ such that $\RC(G) \geqs \frac{1}{2} \log n/\log \log n$.
\end{theorem}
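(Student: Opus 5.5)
The plan is to take $k=2$ and let $T$ grow. With $k=2$ we have $n=|T|$ and $\log n/\log\log n\approx \log|T|/\log\log|T|$. For $T=\An_m$ we have $\log|T|\approx m\log m$ and $\log\log|T|\approx\log m$, so the bound is roughly $m$. It will therefore suffice to show $\RC(G)\geqs m/2+O(1)$ for $G=T^2.(\Out(T)\times\Sn_2)$ acting on $\Omega=T$. In this action $(a,b)\in T^2$ acts by $x\mapsto a^{-1}xb$, $\Out(T)$ acts by automorphisms, and the swap acts by $x\mapsto x^{-1}$. Since enlarging $G$ can change $\RC$ in either direction, I will work with this specific $G$, or with $T^2$ itself if the argument is cleaner there; any one group of degree $n$ suffices. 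The infinitely many values of $n$ are $n=|\An_m|$ as $m$ varies.

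Throughout I use the standard criterion. If there exist $\Lambda,\Sigma\in\Omega^t$ with $\Lambda\sim_{t-1}\Sigma$ but $\Lambda\notin\Sigma^G$, then $\RC(G)\geqs t$, since $\RC(G)=s\leqs t-1$ would force $\Lambda\sim_s\Sigma$ and hence $\Lambda\in\Sigma^G$.

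\textbf{Construction.} Let $m$ be even and put $r=(m-2)/2$. Define the pairwise commuting involutions $g_i=(2i-1,2i)(m-1,m)\in\An_m$ for $1\leqs i\leqs r$. Let $E=\la g_1,\dots,g_r\ra\cong C_2^r$, with $g_1,\dots,g_r$ an $\mathbb{F}_2$-basis. Set $t=r+2$ and take
$$\Lambda=(1,g_1,\dots,g_r,\,g_1g_2\cdots g_r),\qquad \Sigma=(1,g_1,\dots,g_r,\,z),$$
where $z\in E$ is chosen to differ from $g_1\cdots g_r$ but to agree with it after deleting any one coordinate. For instance, $z$ could be an alternative ``parity'' element, or $z=1$ with a correspondingly adjusted $\Sigma$. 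The tool for matching subtuples is that every map $x\mapsto c x$ or $x\mapsto xc$ with $c\in E$, composed with conjugation by elements of $N_{\Sn_m}(E)$, lies in $G$ and preserves $E$. This lets us realise affine maps of $E$. When coordinate $i$ is deleted, I want such a map fixing $1$ and each $g_j$ with $j\neq i$, and sending the last entry of $\Lambda$ to that of $\Sigma$. The argument is the usual one for the affine group on $\mathbb{F}_2^r$: the point set $\{0,e_1,\dots,e_r,\sum e_j\}$ and its modified version agree on every proper subset but not globally.

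\textbf{Non-equivalence.} I then check that no element of $G$ maps $\Lambda$ to $\Sigma$. An element fixing $1$ and each $g_j$ is an automorphism, possibly composed with inversion, that centralises $E$. It therefore fixes $g_1\cdots g_r$, so it cannot send it to $z\neq g_1\cdots g_r$. This part is straightforward.

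\textbf{Main obstacle.} The hard step is the subtuple completeness $\Lambda\sim_{t-1}\Sigma$. The normaliser of $E$ in $\Sn_m$ induces only a small subgroup of $\GL_r(2)$, essentially permutations of the $g_i$. So I cannot invoke full affine transitivity and must craft $z$, and possibly the extra coordinates, so that each of the $t$ deletions is handled explicitly.

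For deletion of coordinate $1$ (the identity), I would first try the translation $x\mapsto xg$ combined with a suitable permutation. For deletion of $g_i$, I would try multiplication by $g_i$ together with a transposition-type conjugation swapping $2i-1$ and $2i$. For deletion of the last coordinate, $\Lambda$ and $\Sigma$ already agree.

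If this $E$ proves too rigid, the fallback is to replace the $g_i$ by elements with disjoint supports of size $2$ inside $\Sn_m$, working in the full $T.\Out(T)$ and modifying parity via the $(m-1,m)$ factor. This keeps $t\approx m/2$, which matches the constant $\tfrac12$ in the statement after the estimate $\log n/\log\log n\leqs m(1+o(1))$.
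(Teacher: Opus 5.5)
\noindent The step you flag as the main obstacle is not just unfinished. For your $\Lambda$ it cannot be completed for any $z$, nor for any adjusted $\Sigma$.

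Take $m\geqs 8$ even, so $r\geqs 3$ and $\Aut(\An_m)=\Sn_m$. For $k=2$, every element of $G$ fixing the point $1\in\Omega=T$ acts as conjugation by some $\pi\in\Sn_m$, possibly followed by inversion, and inversion fixes every element of $E$. Fix $i$ and put $E_i=\la g_j : j\neq i\ra$. Its orbits on $[m]$ are the fixed points $2i-1,2i$, the pairs $\{2j-1,2j\}$ with $j\neq i$, and $\{m-1,m\}$. Since $r-1\geqs 2$, $E_i$ acts on these pairs through pairwise distinct nontrivial characters, so $C_{\Sn_m}(E_i)=\la (1,2),(3,4),\ldots,(m-1,m)\ra$. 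This group is abelian and contains $g_i$. Hence the pointwise stabiliser of $1$ and the $g_j$ with $j\neq i$ already fixes $g_i$, and therefore fixes $g_1\cdots g_r$: the entry $g_i$ is redundant.

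Now, using the element that realises the deletion of the last coordinate, you may assume $\Sigma$ agrees with $\Lambda$ in its first $t-1$ entries. The deletion of $g_i$ then needs an element of this stabiliser sending $g_1\cdots g_r$ to the last entry of $\Sigma$, which forces $\Sigma=\Lambda$. So for this $\Lambda$ the relation $\sim_{t-1}$ coincides with lying in a common $G$-orbit, and no witness for $\RC(G)\geqs t$ can be built from it.

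Your fallback does not change this. Transpositions are not points of $\Omega=\An_m$, and correcting the parity with $(m-1,m)$ gives back the $g_i$. The affine heuristic is also false over $\mathbb{F}_2$. In $\mathrm{AGL}_r(2)$, a linear map fixing $e_j$ for $j\neq i$ can send $\sum_j e_j$ only to vectors with $i$th coordinate $1$. Demanding this for every $i$ forces the last entry to be $\sum_j e_j$.

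\noindent What is missing is a tuple $(\lambda_1,\ldots,\lambda_{t-1})$ with no redundant entry, together with explicit elements: for each $i<t$, the pointwise stabiliser of the $\lambda_j$ with $j\neq i$ must contain an element moving $\lambda_t$ to $\sigma_t$, while the stabiliser of all of them must not. The paper gets this from $T=\An_{m+2}$, $k=3$ and the $3$-cycles $t_i=(1,2,i+1)$.
\begin{itemize}
\item The pointwise stabiliser of $\alpha_1=D$ and the $\alpha_i=D(t_i,1,1)$ is just $P_1$, since $C_T(\la t_2,\ldots,t_{m-1}\ra)=C_T(\An_m)=1$. Hence $\beta=D(t_m,1,1)$ and $\gamma=D(t_{m+1},1,1)$ lie in different orbits.
\item Omitting $\alpha_i$ admits conjugation by $s_{i+1}=(i+1,m+1,m+2)$, which fixes the other $t_j$ and sends $t_m$ to $t_{m+1}$.
\item Omitting $\alpha_1$ is handled by $(s_2,s_1,s_1)$.
\end{itemize}
This gives $\RC(G)\geqs m$ with $n=|T|^2$. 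The same tuple ($1$, the $t_i$, then $t_m$ or $t_{m+1}$) also works for $k=2$ and $G=T^2$. So your choice of $k=2$ and your degree estimate can be kept; it is the elementary abelian construction that has to be replaced.
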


\subsection*{Structure of the paper}

In Section~\ref{sec:prelim}, we give basic notation for diagonal type primitive groups. Section~\ref{sec:k=2} is devoted to the proof of Theorem~\ref{thm:greedy}(i). Lemma~\ref{l:k=2_probability} is a key ingredient, and is a new probabilistic method to find a greedy base of size $3$.  Theorem~\ref{thm:greedy}(ii) is an immediate consequence of Theorem~\ref{t:H_diag}(ii), as $b(G) = 2$ implies $\mathcal{G}(G) = 2$. In Section~\ref{sec:k_ge_3} we prove Theorem~\ref{thm:greedy}(iii) and complete the proof of Theorem~\ref{thm:greedy}. We determine the minimum two-point stabiliser by working with \cite[Theorem 4]{H_diag} (see Lemmas~\ref{l:two_stab_sigma} and \ref{l:greedy_two_stab}), and a detailed analysis of partition stabilisers in alternating and symmetric groups is used to determine greedy bases. Finally, in Section~\ref{sec:RC}, we establish Theorems~\ref{thm:RC4} and \ref{thm:RClog}.

\subsection*{Acknowledgments}

The first author thanks the London Mathematical Society for their support as an LMS Early Career Research Fellow at the University of St Andrews in 2025.

\section{Preliminaries}\label{sec:prelim}

In this section we give more information about the structure of diagonal type groups, and fix some related notation.
Throughout the paper, we let $k\geqs 2$ be an integer and let $T$ be a non-abelian finite simple group. All of the groups we define depend on $k$ and $T$, but we shall omit $k$ and $T$ from our notation when there is no room for confusion. 

Define 
\begin{equation*}
\begin{aligned}
&W_0 =\{(\varphi_1,\dots,\varphi_k)\sigma\in \Aut(T)\wr_k \Sn_k: \Inn(T)\varphi_1 = \Inn(T)\varphi_i \mbox{ for all }i\},\\
&D_0 =\{(\varphi,\dots,\varphi)\sigma\in \Aut(T)\wr_k \Sn_k\} \leqs W_0,\\
&\Omega =[W_0(k,T):D_0].
\end{aligned}
\end{equation*}
Then $|\Omega| = |T|^{k-1}$ and $W_0 = T^k.(\Out(T)\times  \Sn_k)$ acts faithfully on $\Omega$. A group $G\leqs\mathrm{Sym}(\Omega)$ is of \textit{diagonal type} if
$T^k\normeq G\leqs W_0$.
Recall from the introduction the definition of the \emph{top group} $P$ of $G$, and notice that
\begin{equation}\label{e:diag}
T^k\normeq G\leqs W:= T^k.(\Out(T)\times P) \leqs W_0.
\end{equation}
It is well known (see, for example \cite[Theorem 4.5A]{DixonMortimer}) that $G$ is primitive if and only if either $P$ is primitive on $[k] = \{1,\dots,k\}$ or $k = 2$, and that if $G$ is primitive then $T^k$ is the socle of $G$. Let
\begin{equation*}
D:=G \cap D_0 \leqs  \{(\varphi,\dots,\varphi)\sigma:\varphi\in\Aut(T),\sigma\in P\}.
\end{equation*}
We shall generally identify $\Inn(T)$ with $T$, so that elements of $\Inn(T) \wr_k \Sn_k$ are written $(t_1, \ldots, t_k) \sigma$. Hence in this context $t$ denotes  the conjugation map $\varphi_t: T \rightarrow T, \ x \mapsto t^{-1} x t$. This enables us to make the identification
\begin{equation*}
\Omega = \{D(t_1,\dots,t_k):t_1,\dots,t_k\in T\}.
\end{equation*}
The action of $G$ on $\Omega$ is given by
\begin{equation*}
D(t_1,\dots,t_k)^{(\varphi_1,\dots,\varphi_k)\sigma} = D((t_{1^{\sigma^{-1}}})^{\varphi_{1^{\sigma^{-1}}}},\dots,(t_{k^{\sigma^{-1}}})^{\varphi_{k^{\sigma^{-1}}}}).
\end{equation*}
Notice that $ \a^{-1}\varphi_t\a = \varphi_{t^\a}$ for all $\varphi_t\in \Inn(T)$ and $\a \in \Aut(T)$,
so that for any element $(\varphi,\dots,\varphi)\sigma\in D$, 
\begin{equation}\label{e:GD}
D(t_1,\dots,t_k)^{(\varphi,\dots,\varphi)\sigma} = D((t_{1^{\sigma^{-1}}})^{\varphi},\dots,(t_{k^{\sigma^{-1}}})^{\varphi}), \quad \mbox{ so } G_D = D. 
\end{equation}

The base sizes of diagonal type groups were determined in \cite[Theorem 3]{H_diag}, extending earlier work of Fawcett \cite{F_diag}.
\begin{thm}[{\cite[Theorem 3]{H_diag}}]
	\label{t:H_diag}
	Let $G$ be a primitive group of diagonal type with socle $T^k$ and top group $P$, and let $\ell  = \lceil \log_{|T|} k \rceil$.
	\begin{itemize}\addtolength{\itemsep}{0.2\baselineskip}
		\item[{\rm (i)}] If $k = 2$, then $b(G)\in\{3,4\}$, with $b(G) = 4$ if and only if $T\in\{ \An_5, \An_6\}$  and $G = T^2.(\Out(T)\times \Sn_2)$. 
		\item[{\rm (ii)}] If $P\notin \{\An_k, \Sn_k\}$, then $b(G) = 2$.
		\item[{\rm (iii)}] For $k\geqs 3$, if $P\in\{ \An_k, \Sn_k\}$ then $b(G)\in\{\ell+1,\ell+2\}$. Moreover, $b(G) = \ell+2$ if and only if one of the following holds:
		\vspace{1mm}
		\begin{itemize}\addtolength{\itemsep}{0.2\baselineskip}
			\item[{\rm (a)}] $k = |T|$ (so $\ell = 1$);
			\item[{\rm (b)}] $k\in\{|T|-2,|T|^\ell-1,|T|^\ell\}$ and $Q = \Sn_k$;
			\item[{\rm (c)}] $T\in\{ \An_5, \An_6\}$, $k = |T|^2-2$ (so $\ell = 2$) and $G = T^k.(\Out(T) \times  \Sn_k)$. 
		\end{itemize}
	\end{itemize}
\end{thm}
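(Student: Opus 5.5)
The plan is to translate the base problem for $G$ into a colouring problem for the coordinates $[k]$, and then settle existence and non-existence of suitable colourings case by case. By \eqref{e:GD}, $G_D=D$. So a sequence $(D,\,D(t^{(2)}_1,\dots,t^{(2)}_k),\dots,D(t^{(m)}_1,\dots,t^{(m)}_k))$ is a base exactly when no nontrivial $(\varphi,\dots,\varphi)\sigma\in D$ fixes every one of these points.

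Now $(\varphi,\dots,\varphi)\sigma$ fixes $D(t_1,\dots,t_k)$ if and only if there is some $s\in T$ with $t_{i^{\sigma^{-1}}}^{\varphi}=s\,t_i$ for all $i$. So the $j$-th point assigns to each coordinate $i\in[k]$ a colour $t^{(j)}_i\in T$, and these colourings are taken up to left translation by $T$, the diagonal action of $\Aut(T)$ (restricted to the automorphisms occurring in $D$), and permutation of coordinates by $P$. The first step is to record this dictionary carefully. In particular, the columns $c_i=(t^{(2)}_i,\dots,t^{(m)}_i)\in T^{m-1}$ lie in a set of size $|T|^{m-1}$.

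For (ii) I take $m=2$, so each coordinate gets a single colour from $T$. I would invoke known results on distinguishing partitions: a primitive group $P\notin\{\An_k,\Sn_k\}$ has a partition of $[k]$ into at most $4\le|T|$ parts whose setwise stabiliser in $P$ is trivial, and the small exceptions can be handled by hand. I then assign to the parts distinct elements of $T$, chosen so that the multiset of colours is not invariant under any nontrivial map $x\mapsto (s x)^{\varphi^{-1}}$. A convenient choice is $1$, $a$, $b$ and $ab$, together with repetitions of varying multiplicity, where $\langle a,b\rangle=T$; generation forces $\varphi$ to be trivial and $s=1$. Then $\sigma$ must preserve every part, so $\sigma=1$, and $b(G)=2$.

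For (iii), with $P\in\{\An_k,\Sn_k\}$, the lower bound comes from counting columns. If $c_i=c_{i'}$ for two coordinates $i\ne i'$, then the transposition $(i\,i')$ fixes all the base points, and if $Q=\An_k$ one can use a 3-cycle whenever three columns coincide. Hence $k\le |T|^{m-1}$ (or a slight relaxation of this when $Q=\An_k$), which gives $m\ge\ell+1$. For the upper bound I choose distinct columns and break the residual symmetry as in (ii), using generating pairs of $T$, so that only the trivial $(s,\varphi)$ permutes the set of columns. The boundary cases $k\in\{|T|^\ell-2,\ |T|^\ell-1,\ |T|^\ell\}$ and $k=|T|$ are where the difficulty lies. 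When nearly all of $T^{\ell}$ is used, the set of columns is almost invariant under left translations, so some $(s,\varphi)$ permutes the columns. Whether the induced permutation of coordinates lies in $Q$ then depends on parity, and this is why the dichotomy between $Q=\Sn_k$ and $Q=\An_k$ appears. I expect the main obstacle to be the exact analysis in these cases: computing the sign of the permutation induced by a translation of $T^{\ell}$ with one or two points removed, and deciding whether the extra point is forced.

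Finally, for (i), with $k=2$, $\Omega\cong T$ and $D$ acts by $\Aut(T)$ together with inversion. For bases of size $3$ I would count pairs of elements of $T$ that are fixed by some nontrivial automorphism (possibly composed with inversion), and compare this count with $|T|^2$ using fixed-point-ratio bounds. For $\An_5$ and $\An_6$ with $G$ the full group, I would check by direct computation that no base of size $3$ exists.
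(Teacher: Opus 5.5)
This theorem is not proved in the paper; it is imported from \cite[Theorem 3]{H_diag} (extending \cite{F_diag}), so there is no in-paper argument to compare against. Your column dictionary is correct and is the labelled-partition framework of that work (compare Lemma~\ref{l:H_diag_l:2.2}), and the transposition argument for $Q=\Sn_k$ is fine. However, the proposal stops where the content of the theorem begins, and several of its steps fail as stated.

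The main gap is (iii). You explicitly leave the characterisation of $b(G)=\ell+2$ open. Even the generic upper bound is not argued. At least when $Q=\Sn_k$, you need $k$ distinct columns in $T^\ell$ whose set has trivial stabiliser under the maps $x\mapsto s^{-1}x^{\varphi}$. Saying ``as in (ii)'' does not provide this. Already for $\ell=1$ it amounts to finding $k$-subsets of $T$ with trivial setwise stabiliser in $\Hol(T)$ for every $3\leqs k\leqs |T|-3$, which is \cite[Theorem 4]{H_diag} (invoked here in Lemma~\ref{l:two_stab_sigma}).

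Your picture of the boundary cases is also off. When the columns form $T^\ell\setminus M$ with $|M|\leqs 2$, the relevant symmetries are the maps preserving $M$, and translations barely enter.
\begin{itemize}
\item If $M=\{1\}$, then $s=1$ is forced and every diagonal automorphism is a symmetry. For conjugation by some $h\in T$ of odd order $>1$, the induced $\sigma$ has odd order, hence is even. So $(h,\dots,h)\sigma\in T\wr\An_k\leqs G$ fixes every point, whichever of $\An_k,\Sn_k$ is $Q$. Parity therefore does not rescue $Q=\An_k$. What $Q=\An_k$ really gives you is freedom to repeat one column: the transposition of two equal columns is not in $Q$, while every nontrivial symmetry of the multiset has an even lift.
\item If $M=\{1,v\}$, the symmetries are the $\varphi$ centralising every coordinate of $v$, together with (taking $s^{-1}=v$) those inverting every coordinate. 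For $\ell=1$ this contains conjugation by $v$, which gives the case $k=|T|-2$ of (b). For $\ell=2$ it is exactly the condition for $(1,v_1,v_2)$ to be a base when $k=2$, which is why $\An_5,\An_6$ reappear in (c).
\end{itemize}
Without this reduction the ``if and only if'' cannot be reached.

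Other steps also need repair.
\begin{itemize}
\item Lower bound for $Q=\An_k$. Your count gives only $k\leqs |T|^{m-1}+1$, which does not exclude $m=\ell$ when $k=|T|^{\ell-1}+1$. In that case the columns are all of $T^{\ell-1}$ with one column $w$ doubled. For $1\neq h\in T$, the map $x\mapsto s^{-1}x^{h}$ (with $s$ chosen to fix $w$) permutes the columns and lifts to an even $\sigma$, so the stabiliser is nontrivial. This must be said.
\item Part (i). Bounding the bad pairs by a sum over prime-order elements of the point stabiliser fails for $G=W_0$ and $T=\LL_2(q)$. The element $\varphi\sigma$, with $\varphi$ an involution, fixes $1,x,y$ whenever $\varphi$ inverts $x$ and $y$. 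Such a $\varphi$ inverts as many elements of $T$ as $T\varphi$ has elements of order at most $2$. For $q$ odd with $q\equiv\varepsilon\pmod 4$:
  \begin{itemize}
  \item the $q(q+\varepsilon)/2$ inner involutions each invert $1+q(q+\varepsilon)/2$ elements;
  \item the $q(q-\varepsilon)/2$ outer involutions of $\PGL_2(q)$ each invert $q(q-\varepsilon)/2$ elements.
  \end{itemize}
  The sum of squares then exceeds $q^6/4+3q^4/4>|T|^2$. For $q$ even, the inner involutions alone give $(q^2-1)q^4>|T|^2$. So $\LL_2(q)$ needs an explicit construction (compare Lemma~\ref{l:k=2_L2_pre}).
\item Part (ii). The labels $1,a,b,ab$ force $s=1$ and $\varphi=1$ only if they occur with pairwise distinct multiplicities. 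A distinguishing partition typically has just two parts, and with labels $1,a$ the diagonal element $(a,\dots,a)$ fixes both points. You must either refine to at least three parts of distinct sizes (refinement keeps the stabiliser in $P$ trivial), or choose a label set with trivial stabiliser in $\Hol(T)$. Small $k$ must be handled separately.
\end{itemize}
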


\section{Greedy bases for groups with $k = 2$}\label{sec:k=2}

In this section, we shall prove Theorem~\ref{thm:greedy}(i). We shall therefore assume throughout that $k = 2$. Here, by Theorem~\ref{t:H_diag}(i),  we know that $b(G)\in\{3,4\}$, with $b(G) = 4$ if and only if $G = T^2.(\Out(T)\times  \Sn_2)$ with $T\in\{ \An_5, \An_6\}$. We will show that the same is true for $\mathcal{G}(G)$. 

Since $k = 2$, our notation from Section~\ref{sec:prelim} can be simplified.
The \textit{holomorph} of $T$ is $\Hol(T) = T{:}\Aut(T)$. 
For the rest of this section we shall identify $\Omega$ with $T$ and $W_0 = W_0(2, T) = T^2.(\Out(T)\times  \Sn_2)$ with $\la\Hol(T),\sigma\ra$, where $\sigma$ is the inversion map on $T$. For all $t \in T = \Omega$, the action is therefore given by
\begin{equation}\label{e:k=2_act}
t^\sigma = t^{-1} \quad \mbox{ and } \quad t^{g\varphi} = (g^{-1}t)^\varphi
\end{equation}
for any $g\varphi \in T{:}\Aut(T) = \Hol(T)$. So \eqref{e:diag} simplifies to
\begin{equation*}
T{:}\Inn(T)\leqs G\leqs W \leqs W_0.
\end{equation*}
Notice that $P \in \{1, \Sn_2\}$, with $W = \Hol(T)$ if $P = 1$ and $W = W_0$ if $P = \Sn_2$.

For an element $t\in T$ and a subgroup $A$ of $\Aut(T)$,  the \emph{invertiliser} of $x$ in $A$ is
\begin{equation*}
I_{A}(t) = \{\varphi\in A:t^\varphi = t \mbox{ or }t^{-1}\}.
\end{equation*}
Note that $|C_{A}(t)|\leqs |I_{A}(t)|\leqs 2|C_{A}(t)|$, with the first inequality strict if and only if $t^{-1}\in t^{A}$ and $|t| > 2$.

\begin{lem}
    \label{l:k=2_stabiliser}
    Let $x,y\in T$. Then the following properties hold.
    \begin{itemize}\addtolength{\itemsep}{0.2\baselineskip}
		\item[{\rm (i)}] The two-point stabiliser $G_{1,x} = (C_{\Aut(T)}(x)\cup \{\varphi\sigma:\varphi\in \Aut(T), \ x^{\varphi} = x^{-1}\})\cap G$.
		\item[{\rm (ii)}] If $I_{\Aut(T)}(x)\cap I_{\Aut(T)}(y) = 1$ then $(1,x,y)$ is a base for $G$.
		\item[{\rm (iii)}] If $P = \Sn_2$ and $|G_{1,x}|$ is minimal amongst all two-point stabilisers of $G$, then $|I_{\Aut(T)}(x)| \leqs |I_{\Aut(T)}(y)|\cdot |\Out(T)|$.
	\end{itemize}
\end{lem}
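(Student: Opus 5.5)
The plan is to compute everything inside the point stabiliser $(W_0)_1$, using the explicit action \eqref{e:k=2_act}. An element $g\varphi\in\Hol(T)$ sends $1$ to $(g^{-1})^\varphi$, so it fixes $1$ if and only if $g=1$. Also $\sigma$ fixes $1$ and commutes with $\Aut(T)$, since $(t^{-1})^\varphi=(t^\varphi)^{-1}$. Because $W_0=\la \Hol(T),\sigma\ra$, this gives $(W_0)_1=\Aut(T)\times\la\sigma\ra$, and hence $G_1=G\cap(\Aut(T)\times\la\sigma\ra)$.

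For (i), every element of $G_1$ has the form $\varphi$ or $\varphi\sigma$ with $\varphi\in\Aut(T)$. We have $x^\varphi=x$ if and only if $\varphi\in C_{\Aut(T)}(x)$. We also have $x^{\varphi\sigma}=(x^\varphi)^{-1}$, which equals $x$ if and only if $x^\varphi=x^{-1}$. Intersecting with $G$ gives exactly the stated set.

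For (ii), take $g\in G_{1,x,y}$ and apply (i) to both $x$ and $y$. If $g=\varphi$, then $\varphi\in C(x)\cap C(y)\subseteq I(x)\cap I(y)=1$. If $g=\varphi\sigma$, then $x^\varphi=x^{-1}$ and $y^\varphi=y^{-1}$, so again $\varphi\in I(x)\cap I(y)$ and $\varphi=1$. In that case $g=\sigma$, which fixes $x$ and $y$ only when $x^2=y^2=1$.

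This residual case is the delicate point of (ii). I would try to exclude it as follows. If $x,y$ are both involutions, then $\la x,y\ra$ is dihedral, and I would look for a nontrivial (inner) automorphism lying in $I(x)\cap I(y)$, such as a central involution of $\la x,y\ra$ or an element inverting $xy$ appropriately, contradicting the hypothesis. If this cannot be arranged in general, the honest fix is to note that the lemma is only ever applied to pairs $x,y$ that are not both involutions, and to record this as an extra hypothesis.

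For (iii), I would compare $|G_{1,x}|\le|G_{1,y}|$ using two-sided estimates in terms of invertilisers. Note that $I_{\Aut(T)}(t)$ is a subgroup, namely the stabiliser of the set $\{t,t^{-1}\}$. For the upper bound, project $G_{1,y}\to\Aut(T)$ via $\varphi\sigma^e\mapsto\varphi$. By (i) the image lies in $I(y)$. The map is injective unless both $\varphi$ and $\varphi\sigma$ lie in $G_{1,y}$, which forces $y^2=1$, where $I(y)=C(y)$. This case needs separate bookkeeping, and I would handle it by working in the index-$\le 2$ subgroup where the $\sigma$-exponent is determined.

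For the lower bound, since $P=\Sn_2$ and $\Inn(T)\le G_1$, the projection $A$ of $G_1$ to $\Aut(T)$ is a subgroup containing $\Inn(T)$, and every $\varphi\in A$ appears with at least one value of the exponent $e$. The main obstacle is showing that enough $\varphi\in I_A(x)$ occur with the correct exponent: $e=0$ when $x^\varphi=x$ and $e=1$ when $x^\varphi=x^{-1}$. I would first try the subgroup $B=\{\varphi : \varphi\in G\}\cap C(x)$ together with the coset of inverting elements. The goal is to obtain $|G_{1,x}|\ge |I_{\Aut(T)}(x)|/|\Out(T)|$, using $|\Aut(T):\Inn(T)|=|\Out(T)|$ and $|I_{\Aut(T)}(x):I_{\Inn(T)}(x)|\le|\Out(T)|$. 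Chaining the two bounds then gives
$|I_{\Aut(T)}(x)|\le|\Out(T)|\,|G_{1,x}|\le|\Out(T)|\,|G_{1,y}|\le |\Out(T)|\,|I_{\Aut(T)}(y)|$.
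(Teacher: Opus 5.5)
Your part (i) is correct and is the ``easy exercise'' the paper has in mind. In (ii) you have found a real flaw in the statement. You should commit to your fallback rather than try to exclude the residual case, because the statement is false as written. Take $T=\An_5$, $G=W_0$, $x=(1,2)(3,4)$ and $y=(1,3)(2,5)$. Then $xy$ is a $5$-cycle, so $\la x,y\ra\cong D_{10}$. Since $x$ inverts $xy$, we get $C_{\Sn_5}(x)\cap C_{\Sn_5}(y)\leqs C_{\Sn_5}(xy)\cap C_{\Sn_5}(x)=\la xy\ra\cap C_{\Sn_5}(x)=1$. As $\Aut(T)\cong\Sn_5$ and $x,y$ are involutions, this says $I_{\Aut(T)}(x)\cap I_{\Aut(T)}(y)=1$, yet $1\neq\sigma\in G_{1,x,y}$. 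Your proposed repair cannot work in general: when $|xy|$ is odd the dihedral group $\la x,y\ra$ has trivial centre, and elements inverting $xy$ do not lie in $C_{\Aut(T)}(x)\cap C_{\Aut(T)}(y)$. The paper's proof (``(ii) follows from (i)'') overlooks exactly this element $\sigma$. The correct statement needs the extra hypothesis $y^2\neq 1$ (or $\sigma\notin G$). This is harmless downstream, since in Lemma~\ref{l:k=2_L2_pre} and in every application of Lemma~\ref{l:k=2_probability} the element $y$ has order at least $3$.

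The genuine gap in your proposal is in (iii), in two places.

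\emph{The lower bound.} The inequality $|G_{1,x}|\geqs|I_{\Aut(T)}(x)|/|\Out(T)|$ is the heart of the proof, and you leave it as ``the main obstacle''. Your suggested route through $I_{\Inn(T)}(x)$ does not work directly: an inner $\varphi$ inverting $x$ contributes to $G_{1,x}$ only if $\varphi\sigma\in G$, that is, only if $\sigma\in G$. The missing idea is an index comparison inside $W_0$, which avoids all bookkeeping of exponents. Since $T{:}\Inn(T)\leqs G$ and $G$ maps onto $P=\Sn_2$, we have $|W_0:G|\leqs|\Out(T)|$. Both groups are transitive, so $|(W_0)_1:G_1|=|W_0:G|$. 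Since $G_{1,x}=G_1\cap(W_0)_{1,x}$, it follows that $|(W_0)_{1,x}:G_{1,x}|\leqs|(W_0)_1:G_1|\leqs|\Out(T)|$. By (i), $(W_0)_{1,x}$ projects onto $I_{\Aut(T)}(x)$, so $|I_{\Aut(T)}(x)|\leqs|(W_0)_{1,x}|\leqs|\Out(T)|\,|G_{1,x}|\leqs|\Out(T)|\,|G_{1,y}|$. This is the paper's argument.

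\emph{The upper bound.} Your projection argument (equivalently $|G_{1,y}|\leqs|(W_0)_{1,y}|$) gives $|G_{1,y}|\leqs|I_{\Aut(T)}(y)|$ whenever $y^2\neq1$ or $\sigma\notin G$. Your suggested fix for the remaining case cannot succeed. If $\sigma\in G$ and $y^2=1$, then $G_{1,y}=C_{A}(y)\times\la\sigma\ra$ with $A=G\cap\Aut(T)$; for instance $|G_{1,y}|=2|I_{\Aut(T)}(y)|$ when $G=W_0$. The paper's chain asserts $|(W_0)_{1,y}|=|I_{\Aut(T)}(y)|$, which fails for involutions in exactly this way. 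So (iii) should likewise be stated for $y^2\neq 1$ (or $\sigma\notin G$), unless the involution case is handled by a separate argument.
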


\begin{proof}
    Part (i) is an easy exercise, and Part (ii) follows from Part (i) by considering $G_{1,x} \cap G_{1, y}$. Now suppose $P = \Sn_2$ and $|G_{1,x}|$ is minimal. Then $|W_0:G|\leqs |\Out(T)|$, so
    \begin{equation*}
    \begin{aligned}
        |I_{\Aut(T)}(x)| = |(W_0)_{1,x}| &\leqs |G_{1,x}|\cdot |\Out(T)|\\&\leqs |G_{1,y}|\cdot |\Out(T)| \\&\leqs |(W_0)_{1,y}|\cdot |\Out(T)| = |I_{\Aut(T)}(y)|\cdot |\Out(T)|,
    \end{aligned}
    \end{equation*}
as required.
\end{proof}

We first prove that Theorem~\ref{thm:greedy}(i) holds when
$P = 1$ or  $T$ is alternating or sporadic.

\begin{lem}
	\label{l:k=2_greedy_alt_spo}
	Suppose $k = 2$, and $P = 1$ or $T$ is alternating or sporadic. Then $\mathcal{G}(G) = b(G) \in \{3, 4\}$, with $\mathcal{G}(G) = 4$ if and only if $T \in \{\An_5, \An_6\}$ and $G = T^2.(\Out(T) \times \Sn_2)$.
\end{lem}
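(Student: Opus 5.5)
Since $b(G) \le \mathcal{G}(G)$ and Theorem~\ref{t:H_diag}(i) gives $b(G)\in\{3,4\}$ with the stated characterisation of $b(G)=4$, it suffices to show that every greedy base for $G$ has size $b(G)$. We work in the model of this section: $\Omega = T$, and $G$ acts via \eqref{e:k=2_act}. Because $G$ is transitive, we may take $\b_1 = 1$, so $G_1 = G \cap \la \Aut(T),\sigma\ra$. The second point $\b_2 = x$ then lies in a longest $G_1$-orbit, so $|G_{1,x}|$ is minimal amongst all two-point stabilisers. By Lemma~\ref{l:k=2_stabiliser}(i), $G_{1,x}$ is essentially $C_{\Aut(T)}(x)$ or $I_{\Aut(T)}(x)$, intersected with $G$.

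\textbf{Case $P=1$.} Here $G_{1,x} = C_A(x)$ with $A = G\cap\Aut(T)$, and $T\leqs A$. A longest $A$-orbit on $T$ is a class of elements with $|C_A(x)|$ minimal. The key step is to show that, for any such $x$, there is some $y$ with $C_A(x)\cap C_A(y)=1$; then $\b_3=y$ lies in a regular orbit of $G_{1,x}$, and the greedy base has size $3 = b(G)$. To produce $y$, I would use a counting argument. Each nontrivial element $\varphi \in C_A(x)$ of prime order centralises at most $|C_T(\varphi)|$ points $y\in T$. So it suffices to show
\[
\sum_{\varphi} |C_T(\varphi)| < |T|,
\]
where the sum runs over the prime-order elements $\varphi \in C_A(x)$. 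This holds because $C_A(x)$ is small (it is abelian-ish, e.g.\ when $x$ is regular semisimple or a regular element in an alternating or sporadic group). For the general $T$ allowed here I would use fixed-point-ratio bounds and the fact that $|C_A(x)|$ is minimal.

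\textbf{Case $T$ alternating or sporadic and $P=\Sn_2$.} Now $G_{1,x}$ involves $I_A(x)$, and the same scheme applies with $I$ in place of $C$. By Lemma~\ref{l:k=2_stabiliser}(ii), it suffices to find $y$ with $I(x)\cap I(y)=1$, counting over prime-order elements $\varphi \in G_{1,x}$ together with the points fixed by each. Lemma~\ref{l:k=2_stabiliser}(iii) controls $|I_{\Aut(T)}(x)|$ for the minimal $x$ in terms of any convenient $y$, giving $|I(x)| \leqs |I(y)|\,|\Out(T)|$, which is small.
\begin{itemize}
\item For $T=\An_n$, the minimal-invertiliser classes are known explicitly: elements with cycle type consisting of one or two long cycles of distinct odd lengths, up to the $\Sn_n$-fusion issues. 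I would verify directly that such a centraliser or invertiliser is cyclic or dihedral of order $O(n)$, and then exhibit a suitable $y$.
\item For sporadic $T$ and small alternating groups, I would check computationally (in \textsc{Magma}, with character tables) that some $y$ gives a regular $G_{1,x}$-orbit.
\end{itemize}
The exceptions $T\in\{\An_5,\An_6\}$ with $G=W_0$ need $\mathcal{G}(G)\le 4$. Here a direct computation shows that after the third greedy point the stabiliser has order at most $2$, so the fourth point completes the base.

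\textbf{Main obstacle.} The hard step is uniformity over \emph{every} choice of minimal $x$, not just a favourable one. The argument must show that no greedy $x$ gives a $G_{1,x}$ with no regular orbit. For alternating groups this requires identifying all classes attaining the minimum, including parity and fusion subtleties in $\An_n$ versus $\Sn_n$ (and $\An_6$'s extra automorphisms). I would handle large $n$ uniformly via the cycle-type description and settle $n\le 12$ or so by computer.
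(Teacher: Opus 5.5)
You reduce correctly: it suffices to show that every greedy base has size $b(G)$, and $T\in\{\An_5,\An_6\}$ is a finite computation, as in the paper. Your counting criterion is also sound: $y$ can fail to lie in a regular $G_{1,x}$-orbit only if some prime-order element of $G_{1,x}$ fixes it. The gap is that the step carrying all the weight is asserted rather than proved.

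When $P=1$, the simple group $T$ is arbitrary, including every group of Lie type. Your only justification for $\sum_\varphi |C_T(\varphi)|<|T|$ is that ``$C_A(x)$ is small (abelian-ish)''. To turn this into an argument you would need to do three things, none of which you supply:
\begin{enumerate}
\item bound the minimal $|C_A(x)|$ above by the centraliser order of an explicit element, such as a generator of a cyclic maximal torus;
\item bound $|T:C_T(\varphi)|$ below for every prime-order $\varphi\in\Aut(T)$, including field and graph automorphisms;
\item settle by machine the small groups where these estimates fail.
\end{enumerate}
That is a classification-wide analysis on the scale of Lemmas~\ref{l:k=2_probability}--\ref{l:k=2_greedy_excep}.

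The alternating case with $P=\Sn_2$ is likewise only sketched. Which classes minimise $|G_{1,x}|$ depends on $G$ and on $n$ modulo $4$. For example, take $G=\langle T{:}\Inn(T),\sigma\rangle$ with $T=\An_n$, $n\equiv 1\pmod 4$ and $n\geqs 9$. Then $|G_{1,x}|=|I_{\An_n}(x)|$, which is $2n$ for an $n$-cycle but only $2(n-2)$ for cycle type $(n-2,1,1)$. You identify none of these classes precisely and exhibit no suitable $y$.

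The missing idea is the one the paper uses. By \cite[Propositions 3.8, 3.14 and 3.15]{FHLR_Saxl}, when $k=2$ with $P=1$ or $T$ alternating or sporadic (and $T\notin\{\An_5,\An_6\}$), every pair of distinct points of $\Omega$ lies in a base of size three. Hence $G_{1,x}$ has a regular orbit for \emph{every} $x\neq 1$, not just for those minimising $|G_{1,x}|$. The greedy choice of the second point is therefore irrelevant, and every greedy base has size $3=b(G)$ by Theorem~\ref{t:H_diag}(i). This is exactly what removes your ``main obstacle'': you need neither an identification of the minimal classes nor any fixed-point-ratio estimates.
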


\begin{proof}
    If $T  \in\{ \An_5, \An_6\}$ then this can be checked using \textsf{GAP} \cite{GAP} or {\sc Magma} \cite{MAGMA}, so assume otherwise. Then $b(G) = 3$ by Theorem~\ref{t:H_diag}(i), and 
	by \cite[Propositions 3.8, 3.14 and 3.15]{FHLR_Saxl} the group $G$ has the property that every pair of distinct points in $\Omega$ can be extended to a base of size three. Hence each two-point stabiliser has a regular orbit, so $\mathcal{G}(G) = 3$.
\end{proof}

\subsection{The case $T = \LL_2(q)$}
Next, we prove Theorem~\ref{thm:greedy}(i) for groups $T \cong  \LL_2(q)$, as our arguments here are different from those for higher-rank groups.

We first record some information about conjugacy classes. The following result 
is classical, for a recent reference covering this and much more, see \cite[Chapter 2]{deFranceschiLiebeckOBrien}. 

\begin{lem}
    \label{l:k=2_inv_L2}
    Suppose $T = \LL_2(q)$, with $q = p^f$ for some prime $p$,  $q\geqs 7$ and $q\ne 9$. Let $L$ be almost simple with socle $T$, let $L_0 = L\cap \PGL_2(q)$, let $\ell = |L_0:T|$, and let $x\in T$ be a non-identity element of order coprime to $p$. Then
    \begin{itemize}\addtolength{\itemsep}{0.2\baselineskip}
        \item[{\rm (i)}] the order $|x|$ divides $q \pm 1$: we let $\e \in \{1, -1\}$ be such that $|x|$ divides $q - \e$ if $|x| > 2$, and $q \equiv \e \pmod 4$ if $|x| = 2$;
        \item[{\rm (ii)}] there exists $z\in\PGL_2(q)$ of order $q-\e$ such that $x\in\la z\ra$;
        \item[{\rm (iii)}] there exists $g\in T$ such that $z^g = z^{-1}$ (and so $x^g = x^{-1}$);
        \item[{\rm (iv)}] $I_{L_0}(x) = \la z^{(2,q-1)/\ell}\ra{:}\la g\ra\cong D_{\ell(q-\e)}$;
        \item[{\rm (v)}] if $|x| = (q-\e)/(2,q-1)$ then $C_L(x) = \la z^{(2,q-1)/\ell}\ra$,
        and $I_L(x) = I_{L_0}(x)$.
   \end{itemize}
        Furthermore, every element of $T$ of order divisible by $p$ has order exactly $p$.
\end{lem}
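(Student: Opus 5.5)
The plan is to work inside $\PGL_2(q)$ and $\mathrm{P\Gamma L}_2(q)$ with the standard matrix models, using the classical subgroup structure of $T=\LL_2(q)$.

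\emph{Step 1: semisimple elements and parts (i)--(iii).} Every semisimple element of $\PGL_2(q)$ lies in a maximal torus. Such a torus is cyclic, of order $q-1$ (split) or $q+1$ (nonsplit). Its intersection with $T$ is cyclic of order $(q\mp1)/(2,q-1)$. So $|x|$ divides $q-\e$ for some $\e$, and we take $z$ to be a generator of the $\PGL_2(q)$-torus containing $x$, giving (ii).

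For $p$ odd, an element $x$ of order $2$ lies in both tori. We choose $\e$ with $q\equiv\e\pmod 4$, which is exactly the case in which $|x|=2$ divides $(q-\e)/2$, the order of the torus in $T$. Part (i) is then immediate.

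For (iii), we use that the normaliser in $T$ of the torus $\la z\ra\cap T$ is dihedral of order $2(q-\e)/(2,q-1)$. Any involution $g\in T$ in this normaliser but outside the torus inverts the whole $\PGL_2(q)$-torus $\la z\ra$, by a direct matrix check (e.g. $g=\begin{pmatrix}0&1\\-1&0\end{pmatrix}$ on the diagonal torus). We must check that $g$ can be chosen in $T$ rather than only in $\PGL_2(q)$. This holds because the normaliser $N_{\PGL_2(q)}(\la z\ra)\cong D_{2(q-\e)}$ meets $T$ in a dihedral group that still contains non-central involutions, since $q\geqs 7$ and $q\ne 9$ exclude degenerate small cases.

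\emph{Step 2: parts (iv) and (v).} Since $x\ne 1$ is semisimple and $q\geqs 7$, we have $C_{\PGL_2(q)}(x)=\la z\ra$, except when $|x|=2$. In that case the centraliser in $\PGL_2(q)$ is $D_{2(q-\e)}$, and inversion coincides with centralising.

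Intersecting with $L_0$, where $|L_0:T|=\ell\in\{1,(2,q-1)\}$, the centraliser becomes the unique subgroup of $\la z\ra$ of index $(2,q-1)/\ell$, namely $\la z^{(2,q-1)/\ell}\ra$. The elements of $L_0$ inverting $x$ form the coset $g\la z^{(2,q-1)/\ell}\ra$. Hence $I_{L_0}(x)=\la z^{(2,q-1)/\ell}\ra{:}\la g\ra\cong D_{\ell(q-\e)}$.

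The involution case needs separate care, because there $I=C$. We expect it to be handled by noting that the centraliser is dihedral of the same order. This is the bookkeeping I expect to be most delicate.

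For (v), take $|x|=(q-\e)/(2,q-1)$, so that $x$ generates $\la z\ra\cap T$. We must rule out field automorphisms contributing to $C_L(x)$ or $I_L(x)$. An element $\varphi\phi$ of $L$ with $\phi$ a nontrivial field automorphism maps the torus to itself and acts on $x$ as a power map $x\mapsto x^{p^i}$ composed with conjugation. Since $x$ has order $(q\pm1)/(2,q-1)$, which exceeds $p^{f/r}+1$ for every proper subfield, $x$ is not fixed or inverted by such maps. More precisely, the eigenvalues of $x$ generate $\bbF_{q^2}$ or $\bbF_q$ and not a subfield, so $x^\phi\notin\{x^{\pm1}\}^{\PGL_2(q)}$. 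It follows that $C_L(x)=C_{L_0}(x)=\la z^{(2,q-1)/\ell}\ra$, and similarly $I_L(x)=I_{L_0}(x)$.

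I expect the main obstacle to be this field-automorphism argument, particularly the eigenvalue field check for small orders. The hypotheses $q\geqs 7$ and $q\ne 9$ should rule out exceptions.

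\emph{Step 3: elements of order divisible by $p$.} The Sylow $p$-subgroup of $T$ is elementary abelian, and its centraliser is the Sylow subgroup itself, or the Sylow subgroup times the centre for $\SL_2$, which is trivial in $T$. Hence no element has order $pm$ with $m>1$ coprime to $p$. Every element of order divisible by $p$ therefore has order exactly $p$.
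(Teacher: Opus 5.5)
The paper gives no proof of this lemma. It is quoted as classical, with a pointer to de Franceschi--Liebeck--O'Brien, Chapter 2. Your route is the standard one and is sound in outline: tori of $\PGL_2(q)$, their dihedral normalisers, and an order argument against field automorphisms. Several points are wrong or incomplete, however.

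\textbf{Part (iv) for $q$ even.} The conclusion $I_{L_0}(x)\cong D_{\ell(q-\e)}$ is false for $q$ even, so you should not assert it. There $\ell=(2,q-1)=1$, and your own computation gives $\la z\ra{:}\la g\ra$ of order $2(q-\e)$. Moreover $q-\e$ is odd, so no dihedral group of order $q-\e$ exists. The subgroup equality is right; the correct order is $2\ell(q-\e)/(2,q-1)$. This is what the paper actually uses later: in the proof of Lemma~\ref{l:k=2_L2_pre} it takes $I_{\PGL_2(q)}(x)\cong D_{2(q-1)}$ for all $q$.

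\textbf{Involutions and parts (ii), (iv).} An involution of $T$ does not lie in both kinds of torus. Modulo squares, the involution of a split torus of $\PGL_2(q)$ has determinant $-1$, and that of a nonsplit torus has determinant $-c$ with $c$ a nonsquare. So exactly one of them lies in $T$: the one in the torus of order $q-\e$ with $q\equiv\e\pmod 4$. Your choice of $\e$ is correct, but (ii) should be justified as follows: the $T$-torus of order $(q-\e)/2$ has even order, and all involutions of $T$ are conjugate. The involution case of (iv), which you leave open, is then immediate. Since $x$ is the unique involution of $\la z\ra$, we have $\la z\ra{:}\la g\ra\leqs C_{\PGL_2(q)}(x)$. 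Equality follows from the order $2(q-\e)$ you quote, and you then intersect with $L_0$ as before.

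\textbf{Field automorphisms in (v).} Your sentence about the eigenvalues generating $\bbF_q$ or $\bbF_{q^2}$ does not handle inversion. Let $\mu$ be the eigenvalue ratio of $x$. Then $\mu^{p^i}=\mu^{-1}$ can hold with $\mu$ generating $\bbF_q$: for $q=9$ and $|\mu|=4$, $\mu^3=\mu^{-1}$. This is precisely why $q=9$ is excluded. What works is your order bound, stated correctly. If $\mu^{p^i}=\mu^{\pm1}$ with $0<i<f$, then $|\mu|$ divides $p^j-1$ for some $0<j<f$, or $|\mu|$ divides $\sqrt q+1$ (the latter only when $i=f/2$). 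On the other hand, $|x|=|\mu|\geqs(q-1)/(2,q-1)$. This always exceeds $p^{f-1}-1$, and it exceeds $\sqrt q+1$ whenever $f$ is even and $q\ne 9$ (given $q\geqs 7$).

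\textbf{Elements of order divisible by $p$.} In the last step you need the centraliser of a nonidentity element $u$ of a Sylow $p$-subgroup $U$, not of $U$ itself. The reason is that $x^p$ is only known to centralise $x^m$. The needed fact $C_T(u)=U$ is true: $U$ is a TI subgroup, so $C_T(u)\leqs N_T(U)=U{:}H$, and the cyclic group $H$ acts fixed-point-freely on $U$. But this is the statement your argument has to use.
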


\begin{lem}\label{l:k=2_L2_pre}
    Suppose $T = \LL_2(q)$, $q\geqs 7$ and $q\ne 9$, with $q = p^f$ for some prime $p$. Let $\mathcal{O}$ be a subset of the divisors of $|T|$ containing $p$; all divisors of $(q-1)/(2,q-1)$ other than $2$; and, if $q\equiv 1\pmod 4$, the integer $2$. Let $x\in T$ have order in $\mathcal{O}$. Then there exists $y\in T$ of order $(q-1)/(2,q-1)$ such that $(1,x,y)$ is a base for any group of diagonal type with socle $T^2$.
\end{lem}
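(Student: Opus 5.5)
The plan is to reduce the statement to a condition on invertilisers in $A:=\Aut(T)=\mathrm{P\Gamma L}_2(q)$ and then verify it using the geometry of the projective line $\mathbb{P}^1(\bbF_q)$.

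\textbf{Step 1: reduction to $W_0$.} Every group of diagonal type with socle $T^2$ lies in $W_0$, so it suffices to show that $(1,x,y)$ is a base for $W_0$. By Lemma~\ref{l:k=2_stabiliser}(i),(ii) this holds as soon as $I_A(x)\cap I_A(y)=1$. So the task is: for each $x$ with $|x|\in\mathcal{O}$, exhibit $y$ of order $(q-1)/(2,q-1)$ with $I_A(x)\cap I_A(y)=1$.

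\textbf{Step 2: geometric description of the invertilisers.} Let $y$ be the image in $T$ of a generator of a split torus of $\PGL_2(q)$ fixing two points $\{a,b\}\subseteq\mathbb{P}^1(\bbF_q)$, so that $|y|=(q-1)/(2,q-1)$.
\begin{itemize}
\item Using Lemma~\ref{l:k=2_inv_L2}(iv),(v) with $L=A$, every element of $I_A(y)$ normalises $\la y\ra$. For $q\geqs 7$ and $q\neq 9$, $\la y\ra$ has fixed-point set exactly $\{a,b\}$ on the projective line. Hence every element of $I_A(y)$ stabilises the pair $\{a,b\}$.
\item Similarly, if $|x|$ divides $(q-1)/(2,q-1)$ and $|x|>2$, then $x$ lies in a split torus with fixed pair $\{c,d\}$. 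Every element of $I_A(x)$ stabilises $\{c,d\}$, since it normalises $\la x\ra$.
\item If $|x|=p$, then $x$ fixes a unique point $\infty$, and $I_A(x)$ fixes $\infty$.
\item If $|x|=2$ and $q\equiv 1\pmod 4$, then $x$ lies in a split torus, by Lemma~\ref{l:k=2_inv_L2}(i),(ii) with $\e=1$. Hence $x$ fixes a pair $\{c,d\}$, and $I_A(x)=C_A(x)$ stabilises $\{c,d\}$.
\end{itemize}

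\textbf{Step 3: choosing $\{a,b\}$ in each case.} In the torus cases we must choose the pair $\{a,b\}$ disjoint from $\{c,d\}$ so that no nontrivial semilinear map $\varphi\in A$ stabilises both pairs and also satisfies the inversion conditions. After conjugating, assume $\{c,d\}=\{0,\infty\}$ and take $\{a,b\}=\{1,\lambda\}$ with $\lambda$ a generator of $\bbF_q$ over its prime field.
\begin{itemize}
\item The elements of $\PGL_2(q)$ stabilising $\{0,\infty\}$ are the maps $z\mapsto\mu z$ and $z\mapsto\mu/z$. Those also stabilising $\{1,\lambda\}$ form a group of order at most $4$, containing for instance $z\mapsto\lambda/z$.
\item Field automorphisms are excluded because they move $\lambda$.
\end{itemize}
The remaining candidates form a handful of explicit maps. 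For each of them I would check directly that it does not send $x\mapsto x^{\pm1}$ and $y\mapsto y^{\pm1}$ simultaneously. If some candidate survives, I would instead vary $\lambda$ (there are many admissible choices) to kill it.

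In the unipotent case, take $\infty$ fixed and choose $\{a,b\}=\{0,\lambda\}$. An element fixing $\infty$ and stabilising $\{0,\lambda\}$ is either
\begin{itemize}
\item an affine map fixing $0$ and $\lambda$ (together with possibly a field automorphism), which forces it to be trivial by the choice of $\lambda$, or
\item an affine map swapping $0$ and $\lambda$, namely $z\mapsto\lambda-z$ composed with a field automorphism.
\end{itemize}
The second kind must then be excluded using the constraint that it normalises $\la x\ra$ and inverts or centralises $x$.

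\textbf{Main obstacle.} The delicate part is Step 3, in two respects:
\begin{itemize}
\item the bookkeeping of field automorphisms and of the small residual groups (orders $2$ and $4$) stabilising both pairs;
\item the boundary cases where $|x|$ is small, so that $\la x\ra$ is normalised by a larger group. The main instance is $|x|=2$, where $C_A(x)$ is dihedral of order about $2(q-1)$, plus field automorphisms.
\end{itemize}
If the explicit choice becomes messy, I would fall back on a counting argument. For each prime-order $\varphi\in I_A(x)$, bound the number of $T$-conjugates of $\la y\ra$ (there are $q(q+1)/2$ of them) whose invertiliser contains $\varphi$, using fixed-point counts on pairs of points. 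Then show that the total, summed over the at most $|I_A(x)|\leqs 2(q+1)\log q$ such $\varphi$, is less than $q(q+1)/2$.
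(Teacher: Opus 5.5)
There is a genuine gap in Step 3: the configurations you choose are exactly the ones that never work. Take disjoint pairs $\{a,b\}$ and $\{c,d\}$. Sharp $3$-transitivity gives $\tau\in\PGL_2(q)$ sending $(c,d,a)$ to $(d,c,b)$. Any element of $\PGL_2(q)$ swapping two points is an involution, so $\tau$ also swaps $a$ and $b$. In your coordinates $\tau$ is precisely the map $z\mapsto\lambda/z$ that you found, and it conjugates $z\mapsto\mu z$ to $z\mapsto\mu^{-1}z$. It swaps the fixed points of both split tori, whose normalisers are dihedral, so it inverts both tori. Hence $\tau$ inverts $y$, and inverts $x$ (centralises $x$ if $|x|=2$). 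By Lemma~\ref{l:k=2_stabiliser}(i), $\tau\sigma$ then fixes $1$, $x$ and $y$. So $(1,x,y)$ is not a base for $W_0$, for every $\lambda$, and varying $\lambda$ cannot remove this candidate.

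The unipotent case fails the same way. The map $\rho\colon z\mapsto\lambda-z$ fixes $\infty$ and swaps $0,\lambda$, so it inverts $y$. It conjugates $z\mapsto z+\mu$ to $z\mapsto z-\mu$, so $\rho\sigma\in(W_0)_{1,x,y}$, and $\rho$ cannot be excluded.

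Worse, for $p$ odd your Step~1 target $I_A(x)\cap I_A(y)=1$ is unattainable for every $y$ of order $(q-1)/(2,q-1)$. The $q$ involutions $z\mapsto -z+b$ all invert $x$, and every pair of points is stabilised by one of them: $z\mapsto -z+u+v$ swaps $u,v$, and $z\mapsto -z+2u$ fixes $\infty$ and $u$. So in that case you must use the finer criterion of Lemma~\ref{l:k=2_stabiliser}(i): no non-trivial element centralising both, and nothing inverting both.

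The repair is to take pairs meeting in exactly one point.
\begin{itemize}
\item \emph{Torus cases.} With $\{c,d\}=\{0,\infty\}$, take $\{a,b\}=\{0,1\}$. An element stabilising both pairs must fix $0$, hence also $\infty$ and $1$, hence is trivial in $\PGL_2(q)$. Lemma~\ref{l:k=2_inv_L2}(v) gives $I_A(y)=I_{\PGL_2(q)}(y)$; this, not ``field automorphisms move $\lambda$'', is what disposes of field automorphisms. So $I_A(x)\cap I_A(y)=1$. This is exactly the fact $K\cap K^g=1$ that the paper imports from \cite[Theorem~2.1]{J_base}. Once corrected, your projective-line picture gives an elementary proof of that step.
\item \emph{Unipotent case.} Make the fixed pair of $y$ contain $\infty$, e.g.\ $y$ diagonal as in the paper. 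Then $I_A(x)\cap I_A(y)\subseteq\{z\mapsto \pm z\}$. The map $z\mapsto -z$ inverts $x$ but centralises $y$, so by Lemma~\ref{l:k=2_stabiliser}(i) it contributes nothing to $G_{1,x,y}$.
\end{itemize}

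Your counting fallback would not have rescued the argument as described. In the torus case only the $2(q-1)$ pairs meeting $\{c,d\}$ in exactly one point are good, a proportion below $4/q$ of the $q(q+1)/2$ conjugates. For $q$ odd, the $q$ involutions of $I_{\PGL_2(q)}(x)\cong D_{2(q-1)}$ each stabilise $(q+1)/2$ pairs. So the involutions alone already push your union bound up to the full total $q(q+1)/2$.
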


\begin{proof}
    If $|x| = p$ then we may assume $x$ is the image in $T$ of
    \begin{equation*}
        \widehat{x} = 
        \begin{pmatrix}
            1 & \mu \\
            0 & 1
        \end{pmatrix} \in\SL_2(q)
    \end{equation*}
    for some $\mu\in\mathbb{F}_q^\times$. Take $\widehat{y} = \mathrm{diag}(\lambda,\lambda^{-1})$ for some primitive element $\lambda$ of $\mathbb{F}_q$, and let $y$ be the image in $T$ of $\widehat{y}$. Then, as explained in the proof of \cite[Lemma 3.20]{FHLR_Saxl}, the intersection $C_{\Aut(T)}(x)\cap C_{\Aut(T)}(y) = 1$ and there is no $g\in \Aut(T)$ such that $(x,y)^g = (x^{-1},y^{-1})$. By Lemma~\ref{l:k=2_stabiliser}(i) the group 
    \[\begin{array}{rl} G_{1,x, y} & = (C_{\Aut(T)}(x) \cap C_{\Aut(T)}(y) \cap  G) \cap (\{\varphi \sigma: \varphi \in \Aut(T), x^\varphi = x^{-1}, y^{\varphi} = y^{-1}\} \cap G)\\& = 1 \end{array}.\] 

    Hence we may assume that $m \ne p$. Then either $m = 2$ and $q\equiv 1\pmod 4$ (so $\e = 1$ in the notation of Lemma~\ref{l:k=2_inv_L2}(i)), or $m\ne 2$ and $m$ divides $(q-1)/(2,q-1)$.  By Lemma~\ref{l:k=2_inv_L2}(iv), the group $K:=I_{\PGL_2(q)}(x) \cong D_{2(q-1)}$, which is a maximal subgroup of $\PGL_2(q)$ of type $\GL_1(q)\wr S_2$. Let $z\in K$ be an element of order $(q-1)/(2,q-1)$. Then Lemma~\ref{l:k=2_inv_L2}(iv) and (v) show  that $I_{\Aut(T)}(z) = I_{\PGL_2(q)}(z) = K$. By \cite[Theorem 2.1]{J_base}, there exists $g\in \PGL_2(q)$ such that $K\cap K^g = 1$, whence
    \begin{equation*}
    \begin{aligned}
        1 = K\cap K^g &= I_{\PGL_2(q)}(x)\cap I_{\Aut(T)}(z^g) \\
        &= (I_{\Aut(T)}(x)\cap \PGL_2(q))\cap I_{\Aut(T)}(z^g) \\
        &= I_{\Aut(T)}(x)\cap (\PGL_2(q)\cap I_{\Aut(T)}(z^g)) \\
        &= I_{\Aut(T)}(x)\cap I_{\PGL_2(q)}(z^g) 
        = I_{\Aut(T)}(x)\cap I_{\Aut(T)}(z^g).
    \end{aligned}
    \end{equation*}
    The result now follows from Lemma~\ref{l:k=2_stabiliser}(ii), with $y = z^g$.
\end{proof}

\begin{lem}
    \label{l:k=2_L2}
    Suppose $T = \LL_2(q)$, with $q\geqs 7$ and $q\ne 9$. Then every primitive diagonal type group $G$ with socle $T^2$ satisfies $\mathcal{G}(G) = 3$.
\end{lem}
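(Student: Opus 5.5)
Since $b(G)=3$ here by Theorem~\ref{t:H_diag}(i) ($T\notin\{\An_5,\An_6\}$, noting $\LL_2(9)\cong\An_6$ and $\LL_2(4)\cong\LL_2(5)\cong\An_5$ are excluded), we have $\mathcal{G}(G)\geqs 3$. It therefore suffices to show that every greedy base has size at most $3$. Since $G$ is transitive, we may take $\b_1=1$. The second point $\b_2=x$ then lies in a longest orbit of $G_1$, that is, $|G_{1,x}|$ is minimal among two-point stabilisers. The plan is to prove that any such $x$ has order in the set $\mathcal{O}$ of Lemma~\ref{l:k=2_L2_pre}. That lemma then gives $y$ with $G_{1,x,y}=1$, so $G_{1,x}$ has a regular orbit. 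Hence the third greedy point $\b_3$ lies in a regular orbit of $G_{1,x}$, and the greedy base terminates at size $3$.

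To bound $|G_{1,x}|$, we compare with an element $y$ of order $(q-1)/(2,q-1)$. By Lemma~\ref{l:k=2_inv_L2}(iv),(v), $|I_{\Aut(T)}(y)|=|I_{\PGL_2(q)}(y)|=2(q-1)$, so $|G_{1,y}|\leqs 2(q-1)$. Using Lemma~\ref{l:k=2_stabiliser}(i), we have $|G_{1,x}|\geqs|C_{G\cap\Aut(T)}(x)|$. When $P=\Sn_2$, Lemma~\ref{l:k=2_stabiliser}(iii) gives the cleaner inequality $|I_{\Aut(T)}(x)|\leqs 2(q-1)\cdot|\Out(T)|$, with $|\Out(T)|=(2,q-1)f$. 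The case $P=1$ is already covered by Lemma~\ref{l:k=2_greedy_alt_spo}, so we may assume $P=\Sn_2$ throughout.

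We then rule out the elements with order outside $\mathcal{O}$. These are the elements of order dividing $(q+1)/(2,q-1)$ (other than $2$), and, when $q\equiv 3\pmod 4$, the involutions. For such $x$, $I_{\PGL_2(q)}(x)\cong D_{2(q+1)}$ by Lemma~\ref{l:k=2_inv_L2}(iv), so $|I_{\Aut(T)}(x)|$ is at least $2(q+1)$ and typically picks up field automorphisms. The direct size comparison only gives $2(q+1)\leqs 2(q-1)(2,q-1)f$, which is not a contradiction by itself. So we must compare $|G_{1,x}|$ and $|G_{1,y}|$ for the specific $G$, taking $G\cap\Aut(T)$ to contain $\Inn(T)$ and some subgroup $L$. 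One way is to compute both stabilisers inside $L$: $C_L(y)$ and $C_L(x)$ are cyclic tori extended by whatever field automorphisms of $L$ centralise them. Elements of order $(q-1)/(2,q-1)$ can be chosen (generators of the split torus, e.g.\ images of $\mathrm{diag}(\lambda,\lambda^{-1})$ with $\lambda$ primitive) that are centralised by no nontrivial field automorphism and inverted only inside $L_0$. By contrast, the nonsplit torus of order $(q+1)/(2,q-1)$ always contains some element with larger stabiliser. An element of order $p$ has $|C_{\PGL_2(q)}(x)|=q$, which is comparable, so $p\in\mathcal{O}$ is consistent.

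The main obstacle is this step: showing rigorously that a minimiser $x$ cannot lie in a nonsplit torus (or be an involution when $q\equiv 3 \pmod 4$). This needs a careful count of $|G_{1,x}|$, including the $\sigma$-part $\{\varphi\sigma : x^\varphi=x^{-1}\}\cap G$, for every $G$ between $T^2$ and $W_0$. I would first try to show directly that $|G_{1,y}|<|G_{1,x}|$ for every $x$ of order dividing $(q+1)/(2,q-1)$. For generic $x$, the inequality $q-1<q+1$ suffices once one checks that field automorphisms and diagonal automorphisms contribute at least as much for $x$ as for $y$. The small cases, such as $q\in\{7,8,11,13\}$ where the stabiliser sizes are close, I would check by computer as in Lemma~\ref{l:k=2_greedy_alt_spo}.
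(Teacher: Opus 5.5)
You have the same skeleton as the paper: $b(G)=3$ gives the lower bound, and the case $P=1$ is Lemma~\ref{l:k=2_greedy_alt_spo}. It then suffices to show that every $x$ with $|G_{1,x}|$ minimal has order in $\mathcal{O}$, after which Lemma~\ref{l:k=2_L2_pre} gives $G_{1,x}$ a regular orbit. But the step that carries all the content is left as ``the main obstacle'', with only a heuristic plus computer checks for small $q$. That step is the inequality $|G_{1,x}|>|G_{1,y}|$ for every $x$ of order outside $\mathcal{O}$ and every $G$. As you note, your global bounds do not give it: $|G_{1,y}|\leqs 2(q-1)$ is set against $|G_{1,x}|\geqs|C_{G\cap\Aut(T)}(x)|$, and the latter can be as small as $(q+1)/(2,q-1)$. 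Also, ``the nonsplit torus always contains some element with larger stabiliser'' has the wrong quantifier: you need \emph{every} element of order outside $\mathcal{O}$ to have a strictly larger two-point stabiliser. So as it stands this is a genuine gap.

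The missing idea is to compare coset by coset of $T$.
\begin{itemize}
\item By Lemma~\ref{l:k=2_inv_L2}(v), every automorphism centralising or inverting $y$ lies in $\PGL_2(q)$. So $G_{1,y}\subseteq G^\ast:=G\cap\langle\PGL_2(q),\sigma\rangle$, a union of cosets of $T$ that contains $T$.
\item In each coset of $T$ in $\PGL_2(q)$, exactly $(q-1)/(2,q-1)$ elements centralise $y$ and exactly $(q-1)/(2,q-1)$ invert it. Indeed $C_{\PGL_2(q)}(y)$ is cyclic of order $q-1$ with $C_T(y)$ of index $(2,q-1)$, and the inverters form $gC_{\PGL_2(q)}(y)$ with $g\in T$.
\item For $x$ of order $m>2$ dividing $q+1$, the same counts are $(q+1)/(2,q-1)$.
\item For an involution $x$ with $q\equiv 3\pmod 4$, the counts are $q+1$: centralisers and inverters coincide, and $C_{\PGL_2(q)}(x)\cong D_{2(q+1)}$.
\end{itemize}
Hence $|G_{1,x}\cap C|\geqs|G_{1,y}\cap C|$ for every coset $C\subseteq G^\ast$, with strict inequality for $C=T$. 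This gives $|G_{1,x}|\geqs|G_{1,x}\cap G^\ast|>|G_{1,y}|$.

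Field automorphisms only enlarge $G_{1,x}$, so they need no analysis, and no computer check for small $q$ is required. The paper carries out exactly this count through its four cases, according to whether $G$ contains:
\begin{itemize}
\item no element $\rho\sigma$ with $\rho\in\Aut(T)$;
\item such elements, but none with $\rho\in\PGL_2(q)$;
\item one with $\rho\in\PGL_2(q)$, but not $\sigma$;
\item $\sigma$ itself.
\end{itemize}
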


\begin{proof}
By Theorem~\ref{t:H_diag}, the base size $b(G) = 3$, so it suffices to show that $\mathcal{G}(G) \leqs 3$. Recall the set $\mathcal{O}$ from Lemma~\ref{l:k=2_L2_pre}.
    Let $y\in T$ be an element of order $(q-1)/(2,q-1)$, and let $x\in T$ be an element of order $m \not\in \mathcal{O}$. We shall show that $|G_{1, x}| > |G_{1, y}|$: it then follows that the greedy algorithm will pick a second base point whose order lies in the set $\mathcal{O}$, and hence $\mathcal{G}(G) = 3$ by Lemma~\ref{l:k=2_L2_pre}. 
    By Lemma~\ref{l:k=2_inv_L2}, either $m = 2$ and $q\equiv 3\pmod 4$, or $m > 2$ and $m$ divides $q+1$. 
    
    Recall from \eqref{e:k=2_act} that $\sigma$ is the inversion map on $T$. We divide the proof into four cases:
    \begin{itemize}\addtolength{\itemsep}{0.2\baselineskip}
		\item[{\rm (a)}] There is no $\rho\in\Aut(T)$ such that $\rho\sigma\in G$.
        \item[{\rm (b)}] There is an element $\rho\in\Aut(T)$ such that $\rho\sigma\in G$, but there is no $\rho\in\PGL_2(q)$ such that $\rho\sigma\in G$.
		\item[{\rm (c)}] There is an element $\rho\in\PGL_2(q)$ such that $\rho\sigma\in G$, but $\sigma\notin G$.
        \item[{\rm (d)}] $\sigma\in G$.
    \end{itemize}
    
    In Case (a), the group $G\leqs \Hol(T)$, so $P = 1$ and Lemma~\ref{l:k=2_greedy_alt_spo} shows that $\mathcal{G}(G) = 3$.

    For Case (b), 
     Lemma~\ref{l:k=2_inv_L2}(v) (with $y$ in place of $x$), shows that every element in $\Aut(T)$ inverting $y$ lies in the coset $gC_{\PGL_2(q)}(y)\subseteq \PGL_2(q)$, where $g\in T$ is an element inverting $y$. Hence
    \begin{equation*}
        \{\varphi\sigma : \varphi  \in \Aut(T),  \ y^{\varphi} = y^{-1}\}\cap G = \{\varphi\sigma: \varphi \in \PGL_2(q),  \ y^{\varphi} = y^{-1}\}\cap G = \emptyset,
    \end{equation*}
    by  our assumptions on $G$. 
    Thus Lemma~\ref{l:k=2_stabiliser}(i) shows that 
    \begin{equation*}
        |G_{1,y}| = |C_{\Aut(T)}(y)\cap G| = |C_{\PGL_2(q)}(y)\cap G| = 
        \begin{cases}
            2(q-1) & \mbox{if }\PGL_2(q)\leqs G\\
            q-1 & \mbox{otherwise.}
        \end{cases}
    \end{equation*}
    However,
    \begin{equation*}
        |G_{1,x}| \geqs |C_{\PGL_2(q)}(x)\cap G| = 
        \begin{cases}
            2(q+1) & \mbox{if } \PGL_2(q)\leqs G\\
            q+1 & \mbox{otherwise}
        \end{cases}
        > |G_{1,y}|.
    \end{equation*}

    For Case (c), notice that $\PGL_2(q)\not\leqs G$. First assume that $m>2$, so that $m$ divides $q+1$. Then Lemma~\ref{l:k=2_inv_L2}(iv) with $L_0 = \PGL_2(q)$ shows that $C_{\PGL_2(q)}(x) \cong C_{q+1}$ and there exists $w\in T$ such that $x^w = x^{-1}$. Thus
    \begin{equation*}
        \{\varphi \in \PGL_2(q)\setminus T : x^\varphi = x^{-1}\} = wC_{\PGL_2(q)}(x)\setminus wC_{T}(x)
    \end{equation*}
    has size $|C_{T}(x)| = (q+1)/(2,q-1)$. It follows that
    \begin{equation*}
        |G_{1,x}| \geqs 2(q+1)/(2,q-1).
    \end{equation*}
    Arguing as in Case (b) shows that
    \begin{equation*}
        |\{\varphi \in\PGL_2(q)\setminus T :y^\varphi = y^{-1}\}| = |gC_{\PGL_2(q)}(y)\setminus gC_{T}(y)| = (q-1)/(2,q-1).
    \end{equation*}
    Now Lemma~\ref{l:k=2_stabiliser}(i) shows that
    \begin{equation*}
    \begin{aligned}
        |G_{1,y}| &= |(C_{\Aut(T)}(y)\cup \{\varphi\sigma : \varphi \in \Aut(T), \  y^{\varphi} = y^{-1}\})\cap G| \\
        &= |C_{\PGL_2(q)}(y)\cap G| + |\{\varphi\sigma : \varphi\in\PGL_2(q)\setminus T, \ y^{\varphi} = y^{-1} \}\cap G|\\
        &= 2(q-1)/(2,q-1).
    \end{aligned}
    \end{equation*}
    Hence $|G_{1,x}| > |G_{1, y}|$ if $m>2$. We therefore assume that $m = 2$ and $q\equiv 3\pmod 4$. Here Lemma~\ref{l:k=2_inv_L2}(iv) shows that $C_{\PGL_2(q)}(x) \cong D_{2(q+1)}$, so for an element $\varphi\in C_{\PGL_2(q)}(x)$, either $\varphi\in T$ and $\varphi\in G_{1,x}$, or $\varphi\in\PGL_2(q)\setminus T$ and $\varphi\sigma\in G_{1,x}$. Hence $|G_{1,x}| = 2(q+1) > |G_{1,y}|$.

    In Case (d), the group $G = \la T{:}L,\sigma\ra$ for some $L\leqs \Aut(T)$. Here  Lemma~\ref{l:k=2_stabiliser}(i) shows that $|G_{1, x}| = |I_L(x)|$, so it suffices to show that $|I_L(x)| > |I_L(y)|$. To see this, Lemma~\ref{l:k=2_inv_L2}(iv) implies that
    \begin{equation*}
        |I_L(x)| \geqs |I_{L_0}(x)| \geqs \ell(q+1) > \ell(q-1) = |I_L(y)|,
    \end{equation*}
    so the result follows.
\end{proof}

\subsection{Groups of Lie type in dimension at least three}

In this subsection we prove that if $T$ is a group of Lie type that is not isomorphic to $\LL_2(q)$ for any $q$,  then $\mathcal{G}(G) = 3$, hence proving Theorem~\ref{thm:greedy}(i) for these groups. Recall from Theorem~\ref{t:H_diag}(i) that $b(G) = 3$ for these groups, and that by Lemma~\ref{l:k=2_greedy_alt_spo} we may assume that $P = \Sn_2$.

We first verify Theorem~\ref{thm:greedy}(i) for some low-rank groups defined over small fields. Let $\mathcal{A}$ be the following collection of simple groups of Lie type: 
\begin{equation*}
\begin{aligned}
    \mathcal{A} = \{&\LL_3(q)\ (q\leqs 25),\ \LL_3(64),\ \LL_4(q)\ (q\leqs 17),\ \LL_5(2), \ \LL_6(2)\\
    &\UU_3(q)\ (q\leqs 32),\ \UU_4(q)\ (q\leqs 5),\ \UU_5(2), \ \UU_6(2),\\ 
    & \PSp_4(q)\ (3\leqs q\leqs 5),\ \PSp_6(2),\ \PSp_6(3), \ \PSp_8(2),\\ &  \POmega_7(3), \ \POmega_8^\pm(2), \ \POmega_8^+(3),\  \POmega^{-}_{10}(2),
    \ {^2}\mathrm{B}_2(8),\ {^2}\mathrm{B}_2(32), \ {^2}\mathrm{F}_4(2)',\ \mathrm{G}_2(3),\ {^3}\mathrm{D}_4(2)\}.
\end{aligned}
\end{equation*}

\begin{lem}
	\label{l:comp}
	If $T\in\mathcal{A}$ then $\mathcal{G}(G) = 3$.
\end{lem}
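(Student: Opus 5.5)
This is a finite computation, and I would organise it so that it reduces to checking that the minimal two-point stabilisers have a regular orbit. By Theorem~\ref{t:H_diag}(i), $b(G)=3$ for every $T\in\mathcal{A}$, since none of these groups is $\An_5$ or $\An_6$. So it suffices to show $\mathcal{G}(G)\leqs 3$. By Lemma~\ref{l:k=2_greedy_alt_spo} we may also assume $P=\Sn_2$. Because $G$ is transitive on $\Omega=T$, we may take the first base point to be $1$.

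For the second point, the greedy algorithm chooses $x$ in a longest orbit of $G_1$. Equivalently, $x$ is a point for which $|G_{1,x}|$ is minimal. So I must show that for every such $x$, the group $G_{1,x}$ has a regular orbit on $T$. By Lemma~\ref{l:k=2_stabiliser}(ii), it is enough to find $y\in T$ with $G_{1,x}\cap G_{1,y}=1$.

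The groups in question satisfy $T{:}\Inn(T)\normeq G\leqs W_0$ with $P=\Sn_2$. They correspond to subgroups $L\leqs\Aut(T)$ containing $\Inn(T)$, together with a choice of coset representative $\rho\sigma\in G$ with $\rho\in\Aut(T)$, taken modulo $L$ and subject to the obvious closure conditions. There are only finitely many such $G$ for each $T$.

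The computation for each $T\in\mathcal{A}$ and each $G$ runs as follows.
\begin{itemize}
\item[(1)] Compute representatives of the $\Aut(T)$-classes of elements of $T$ in {\sc Magma}.
\item[(2)] For each representative $x$, compute $|G_{1,x}|$ using Lemma~\ref{l:k=2_stabiliser}(i): it equals $|C_L(x)|$ plus the number of $\varphi\in\rho L$ with $x^\varphi=x^{-1}$.
\item[(3)] Identify the classes achieving the minimum.
\item[(4)] For one representative $x$ of each such class, search randomly for $y\in T$ such that no nontrivial element of $G_{1,x}$ fixes $y$. A nontrivial element $\varphi\in G_{1,x}\cap\Aut(T)$ fixes $y$ if $y^\varphi=y$, and a nontrivial element $\varphi\sigma$ fixes $y$ if $y^\varphi=y^{-1}$.
\end{itemize}
Minimal stabilisers should be very small, typically of order at most $|\Out(T)|\cdot 2$ times the order of a regular semisimple torus. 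So random $y$ should succeed almost immediately. To reduce the work, it is enough to handle each $x$ up to $N_W(G)$-conjugacy, and it is enough to treat the largest $G$ in each case. However, monotonicity fails because the minimising classes can change with $G$, so in practice I would simply loop over all $G$.

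The main obstacle is computational cost for the largest members of $\mathcal{A}$, such as $\POmega_{10}^-(2)$, $\PSp_8(2)$, $\POmega_8^+(3)$, ${^2}\mathrm{F}_4(2)'$ and $\LL_4(17)$. For these groups, computing $\Aut(T)$, its class representatives and centralisers in a faithful permutation or matrix representation is nontrivial. For these I would work in a small-degree permutation representation of $\Aut(T)$. I would obtain the minimising classes from character-table data: class sizes in $\Aut(T)$, together with power maps to decide whether $x$ is $\Aut(T)$-real. With this data I would only need to construct the few small groups $G_{1,x}$ explicitly, and then carry out the random search for $y$ there.
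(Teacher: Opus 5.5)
Your reduction is right: after fixing $1$, the greedy algorithm picks an $x$ minimising $|G_{1,x}|$, and it suffices that every such $G_{1,x}$ has a regular orbit on $T$. There is, however, a concrete error in your step (1), and your route differs from the paper's.

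\textbf{The error.} You compute $|G_{1,x}|$ for one representative of each $\Aut(T)$-class, and steps (3)--(4) then work class by class. This is only valid if $|G_{1,x}|$ is constant on $\Aut(T)$-classes, which in general it is not. It is invariant under $\Inn(T)$, under inversion, and under $N_{\Aut(T)}(G)$. For an arbitrary $\psi\in\Aut(T)$ one only has $G_{1,x^\psi}=\bigl((G^{\psi^{-1}})_{1,x}\bigr)^{\psi}$. You state the right principle (work up to $N_W(G)$), but step (1) uses the coarser equivalence.

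For several members of $\mathcal{A}$ the group $\Out(T)$ is non-abelian, for example $\LL_3(4)$, $\LL_3(7)$, $\UU_4(3)$, $\UU_6(2)$, $\POmega_8^+(2)$ and $\POmega_8^+(3)$. Then $G$ need not be normalised by $\Aut(T)$. Take $T=\POmega_8^+(2)$, $L=\langle \Inn(T),\gamma\rangle$ with $\gamma$ a graph involution, and $G=\langle T{:}L,\sigma\rangle$. For an involution $x$, Lemma~\ref{l:k=2_stabiliser}(i) gives $|G_{1,x}|=2|C_L(x)|$. On the three $T$-classes of involutions permuted by triality, this equals $4|C_T(x)|$ on the $\gamma$-stable class and $2|C_T(x)|$ on the other two. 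The same mechanism affects every triality-permuted class, so your minimum can be wrong and minimising $x$ can be missed.

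The fix is to index by $T$-classes, as the paper does for its set $S$, or by $N_{\Aut(T)}(G)$-orbits. Likewise, your character-table shortcut would need class data for $T.L$ and for the coset $\rho L$, not just $\Aut(T)$-class sizes and $\Aut(T)$-reality.

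\textbf{Comparison with the paper.} The paper never enumerates the groups $G$, so this issue never arises. It computes $v=\min_y|I_{\Aut(T)}(y)|$ and the set $S$ of $T$-class representatives $x$ with $|I_{\Aut(T)}(x)|\leqs v\,|\Out(T)|$. By Lemma~\ref{l:k=2_stabiliser}(iii), which rests on $|W_0:G|\leqs|\Out(T)|$ when $P=\Sn_2$, the set $S$ contains a representative of every minimiser of $|G_{1,x}|$, for every relevant $G$ simultaneously. It then checks that each $x\in S$ has some $x_0$ with $I_{\Aut(T)}(x)\cap I_{\Aut(T)}(x_0)=1$. By Lemma~\ref{l:k=2_stabiliser}(ii) this makes $(1,x,x_0)$ a base for all $G$ at once; it is the $G$-independent strengthening of your condition $G_{1,x}\cap G_{1,y}=1$. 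So the paper needs one computation in $\Aut(T)$ per $T$, using only $\Aut(T)$-invariant quantities.

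Your per-$G$ method is finer: it uses the exact minimisers and the weaker intersection condition, so it would still work if the paper's sufficient test failed for some $x\in S$. The price is enumerating all pairs $(L,\rho L)$ and computing separately inside each $T.L$. That is exactly the cost you identify as the main obstacle for the large members of $\mathcal{A}$, and it is what the paper's bound from Lemma~\ref{l:k=2_stabiliser}(iii) removes.
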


\begin{proof}
	For each group $T$, we use {\sc Magma} to compute the minimal value $v$ of
    $|I_{\Aut(T)}(y)|$ over all $y \in T$. 
	We then compute a set $S$ of $T$-class representatives of elements $x\in T$ satisfying the inequality $|I_{\Aut(T)}(x)| \leqs v \cdot |\Out(T)|$. By Lemma~\ref{l:k=2_greedy_alt_spo}, we may assume $P = \Sn_2$, so by Lemma~\ref{l:k=2_stabiliser}(iii) the set $S$ includes all $T$-conjugacy class representatives of all $x\in T$ with $|G_{1,x}|$ minimal. For each $x\in S$, we then check computationally that there exists $x_0\in T$ such that $I_{\Aut(T)}(x)\cap I_{\Aut(T)}(x_0) = 1$, and hence $\mathcal{G}(G) = 3$ by Lemma~\ref{l:k=2_stabiliser}(ii).
\end{proof}

The next result is our key tool for showing that every greedy base for the remaining groups $G$ of diagonal type with $k = 2$ has size three. 

\begin{lem}
    \label{l:k=2_probability}
    Let $\mathcal{P}$ be a set of $\Aut(T)$-class representatives of prime order elements in $\Aut(T)$, with $T$ non-abelian simple, and for each $y \in T$ let 
    \begin{equation*}
        \widetilde{Q}(T,y) = |I_{\Aut(T)}(y)|\cdot|\mathrm{Out}(T)|\cdot \sum_{z\in\mathcal{P}}\frac{|z^{\mathrm{Aut}(T)}\cap I_{\mathrm{Aut}(T)}(y)|}{|z^{\mathrm{Aut}(T)}|}.
    \end{equation*}
    If there exists an element $y \in T$ such that $\QQ(T, y) < 1$, then each primitive diagonal type group $G$ with socle $T^2$ satisfies $\mathcal{G}(G) = 3$.
\end{lem}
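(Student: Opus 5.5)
By Lemma~\ref{l:k=2_greedy_alt_spo} we may assume that $P = \Sn_2$. Since $G$ is transitive, we may take the first point of any greedy base to be $1 \in T = \Omega$. The second point $x$ then lies in a longest orbit of $G_1$, so $|G_{1,x}|$ is minimal among all two-point stabilisers of $G$. By Theorem~\ref{t:H_diag}(i) we have $b(G) \geqs 3$, so every greedy base has size at least $3$. It therefore suffices to show that, for every such $x$, there is some $x_0 \in T$ with $G_{1,x,x_0} = 1$. Then $x_0$ lies in a regular orbit of $G_{1,x}$, which is a longest orbit, so the greedy algorithm terminates at the third point. By Lemma~\ref{l:k=2_stabiliser}(ii) it is enough to find $x_0$ with $I_{\Aut(T)}(x) \cap I_{\Aut(T)}(x_0) = 1$.

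Write $A = \Aut(T)$, and note that each invertiliser $I_A(t)$ is a subgroup of $A$. Fix $y \in T$ with $\QQ(T,y) < 1$. We take $x_0 = y^g$ for a uniformly random $g \in A$; this lies in $T$ because $T \normeq A$, and $I_A(y^g) = I_A(y)^g$. The subgroup $I_A(x) \cap I_A(y)^g$ is nontrivial exactly when it contains an element of prime order. So, by a union bound, the probability that it is nontrivial is at most
\[
\sum_{z \in \mathcal{P}} \ \sum_{w \in z^A \cap I_A(x)} \Pr\bigl(w \in I_A(y)^g\bigr).
\]
For fixed $w \in z^A$, the element $w^{g^{-1}}$ is uniformly distributed over $z^A$ as $g$ ranges over $A$. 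Hence
\[
\Pr\bigl(w \in I_A(y)^g\bigr) = \frac{|z^A \cap I_A(y)|}{|z^A|}.
\]

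Next we bound $|z^A \cap I_A(x)|$ crudely by $|I_A(x)|$. Since $x$ gives a minimal two-point stabiliser and $P = \Sn_2$, Lemma~\ref{l:k=2_stabiliser}(iii) yields $|I_A(x)| \leqs |I_A(y)| \cdot |\Out(T)|$. Combining these bounds, the probability above is at most
\[
|I_A(y)| \cdot |\Out(T)| \cdot \sum_{z \in \mathcal{P}} \frac{|z^A \cap I_A(y)|}{|z^A|} = \QQ(T,y) < 1.
\]
Hence some $g \in A$ gives $I_A(x) \cap I_A(y^g) = 1$. By Lemma~\ref{l:k=2_stabiliser}(ii), $(1, x, y^g)$ is then a base, and so every greedy base has size exactly $3$.

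The only delicate step is making sure the bound is uniform over all possible second points $x$, since we do not know which $x$ the greedy algorithm picks. This is exactly what Lemma~\ref{l:k=2_stabiliser}(iii) provides: it replaces the unknown $|I_A(x)|$ by the fixed quantity $|I_A(y)| \cdot |\Out(T)|$, and that factor has been built into $\QQ$. The rest is the standard fixed-point-ratio counting argument.
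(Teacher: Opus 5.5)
Your argument is correct and is essentially the paper's proof. You reduce to $P=\Sn_2$ via Lemma~\ref{l:k=2_greedy_alt_spo}, take $x$ with $|G_{1,x}|$ minimal, use Lemma~\ref{l:k=2_stabiliser}(iii) to replace $|I_{\Aut(T)}(x)|$ by $|I_{\Aut(T)}(y)|\cdot|\Out(T)|$, and bound the probability that a random conjugate $y^g$ violates the hypothesis of Lemma~\ref{l:k=2_stabiliser}(ii). The only differences are that you derive the union bound $\sum_{z}|z^{\Aut(T)}\cap I_{\Aut(T)}(x)|\,|z^{\Aut(T)}\cap I_{\Aut(T)}(y)|/|z^{\Aut(T)}|$ directly rather than citing \cite[Lemma 2.1]{AB_reg}, and you spell out explicitly why a regular orbit of $G_{1,x}$ forces every greedy base to stop at the third point.
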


\begin{proof}
    Let $x\in T$ be such that $|G_{1,x}|$ is minimal amongst all two-point stabilisers in $G$. For $y\in T$, define $Q(T,x,y)$ to be the probability that a uniformly random conjugate $y^g$ of $y$ is such that
    \begin{equation*}
        I_{\Aut(T)}(x)\cap I_{\Aut(T)}(y^g) \ne 1.
    \end{equation*}
    By Lemma~\ref{l:k=2_greedy_alt_spo}, we may assume that $P = \Sn_2$, so Lemma~\ref{l:k=2_stabiliser}(iii) shows that $|I_{\Aut(T)}(x)| \leqs |I_{\Aut(T)}(y)|\cdot|\Out(T)|$. Hence we can use \cite[Lemma 2.1]{AB_reg} to bound
    \begin{equation*}
    \begin{aligned}
        Q(T,x,y) &\leqs \sum_{z\in\mathcal{P}}\frac{|z^{\Aut(T)}\cap I_{\Aut(T)}(x)|\cdot |z^{\Aut(T)}\cap I_{\Aut(T)}(y)|}{|z^{\Aut(T)}|}
        \\
        &\leqs |I_{\mathrm{Aut}(T)}(x)|\cdot \sum_{z\in\mathcal{P}}\frac{|z^{\mathrm{Aut}(T)}\cap I_{\mathrm{Aut}(T)}(y)|}{|z^{\mathrm{Aut}(T)}|}\\
        &\leqslant |I_{\Aut(T)}(y)|\cdot|\mathrm{Out}(T)|\cdot \sum_{z\in\mathcal{P}}\frac{|z^{\mathrm{Aut}(T)}\cap I_{\mathrm{Aut}(T)}(y)|}{|z^{\mathrm{Aut}(T)}|} = \widetilde{Q}(T,y).
    \end{aligned}
    \end{equation*}
    Thus if $\widetilde{Q}(T,y) < 1$ then $Q(T,x,y) < 1$. Finally,  Lemma~\ref{l:k=2_stabiliser}(ii) shows that if $Q(T,x,y) < 1$  for all such $x$ then $\mathcal{G}(G) \leqs 3$, 
    and hence $\mathcal{G}(G) = b(G) = 3$ by Theorem~\ref{t:H_diag}.
\end{proof}

Our notation for classical groups is as in \cite{BG_classical}: in particular for the orthogonal groups see \cite[Section 2.5]{BG_classical}. For example,  $\mathrm{O}^{\varepsilon}_n(q)$ denotes the group of isometries of a non-degenerate quadratic form (this group is called $\mathrm{GO}_n^\e(q)$ by some authors), and our $\mathrm{GO}_n^\e(q)$ is the group of similarities. 
By $\mathrm{Inndiag}(T)$ we denote the group of inner and diagonal automorphisms of $T$. More precisely, $\mathrm{Inndiag}(\LL_n^\e(q)) = \PGL_n^\e(q)$, $\mathrm{Inndiag}(\PSp_{2m}(q)) = \mathrm{PGSp}_{2m}(q)$, and if $T$ is orthogonal then $\mathrm{Inndiag}(T) = \mathrm{PSO}_n(q)$ if $n$ is odd, and an index two subgroup of the projective conformal group 
when $n$ is even. 

We now have a sequence of lemmas collecting data which we will eventually use to choose a suitable $y \in T$ for the remaining groups $T$ of Lie type, and then to bound $\QQ(T, y)$.

\begin{lem}[{\cite[Proposition~3.9]{B_fpr2}}]
	\label{l:order}
	The following inequalities hold for all $n,m\geqs 1$ and all prime powers $q$. 
	\begin{itemize}\addtolength{\itemsep}{0.2\baselineskip}
		\item[{\rm (i)}] $\frac{1}{2}q^{n^2-1} < |\PGL_n^\e(q)| < q^{n^2-1}$, $|\GL_n(q)| < q^{n^2}$ and $|\GU_n(q)| \leqs (q+1)q^{n^2-1}$.
		\item[{\rm (ii)}] $\frac{1}{2}q^{m(2m+1)} <  |\Sp_{2m}(q)| = |\mathrm{SO}_{2m+1}(q)| < q^{m(2m+1)}$.
		\item[{\rm (iii)}] $\frac{1}{2}q^{m(2m-1)}< |\mathrm{Inndiag}(\POmega_{2m}^\e(q))| = |\mathrm{SO}_{2m}^\e(q)| < q^{m(2m-1)}$.
		\item[{\rm (iv)}] If $q$ is odd then $|\mathrm{Inndiag}(\POmega_{2m}^\e(q))| = |\mathrm{PO}_{2m}^\e(q)|$.
	\end{itemize}
\end{lem}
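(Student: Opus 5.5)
This is a cited result (Burness, Proposition 3.9), which amounts to standard order inequalities. I would prove each inequality directly from the order formulas, writing every order as $q^N\prod_i(1\pm q^{-i})$ and bounding the product.

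For (i), I would use
\[
|\PGL_n(q)| = q^{n^2-1}\prod_{i=2}^n(1-q^{-i}),
\]
where the product is less than $1$. For the lower bound, Euler's pentagonal estimate gives $\prod_{i\ge2}(1-q^{-i})\ge 1-q^{-2}-q^{-3} > 1/2$. The same expansion gives $|\GL_n(q)| = q^{n^2}\prod_{i\ge1}(1-q^{-i})<q^{n^2}$.

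In the unitary case,
\[
|\PGU_n(q)| = q^{n^2-1}\prod_{i=2}^n(1-(-q)^{-i}).
\]
I would pair consecutive factors $(1+q^{-(2j-1)})(1-q^{-2j})$. Each pair is less than $1$ when the first index is at least $3$, and the leading factor $i=2$ equals $1-q^{-2}<1$. The lower bound follows from the alternating-product estimate $\prod\ge(1-q^{-2})(1-q^{-4})\cdots > 1/2$. For $|\GU_n(q)|$ I would write it as $(q+1)|\PGU_n(q)|$ and apply the upper bound just obtained.

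For (ii), I would use
\[
|\Sp_{2m}(q)| = q^{m(2m+1)}\prod_{i=1}^m(1-q^{-2i}),
\]
whose product lies in $(1-q^{-2}-q^{-4},1)$ and hence exceeds $1/2$. The equality $|\Sp_{2m}(q)|=|\mathrm{SO}_{2m+1}(q)|$ is the standard order formula.

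For (iii),
\[
|\mathrm{SO}^\e_{2m}(q)| = q^{m(2m-1)}(1-\e q^{-m})\prod_{i=1}^{m-1}(1-q^{-2i}),
\]
up to the factor $(2,q)$ conventions. The bounds follow as before. The main obstacle is the factor $1+q^{-m}$ when $\e=-$: I would absorb it against $1-q^{-2}$, using $(1+q^{-m})(1-q^{-2})\le 1$ for $m\ge2$. The small cases $m=1$ and $q=2$ need a direct check. I would also cite that $|\mathrm{Inndiag}(\POmega^\e_{2m}(q))| = |\mathrm{SO}^\e_{2m}(q)|$: both have order $|\POmega|\cdot d$ with matching index.

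For (iv), when $q$ is odd the order is $|\mathrm{PO}^\e_{2m}(q)| = |\mathrm{O}^\e_{2m}(q)|/2 = |\mathrm{SO}^\e_{2m}(q)|$, because the centre $\{\pm1\}$ has order $2$.
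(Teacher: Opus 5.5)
The paper gives no argument for this lemma; it is imported from Burness. Checking the inequalities directly from the order formulas is therefore the right approach. Your treatment of (ii), (iv), $|\GL_n(q)|$, the unitary lower bound, the reduction $|\GU_n(q)|=(q+1)|\PGU_n(q)|$, and (iii) for $m\geqs 2$ is fine. However, two of your estimates in (i) are false as written.

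First, $\prod_{i\geqs 2}(1-q^{-i})\geqs 1-q^{-2}-q^{-3}$ does not hold. Indeed $\prod_{i\geqs 2}(1-x^i)=1-x^2-x^3-x^4+\cdots$, and for instance $(1-2^{-2})(1-2^{-3})(1-2^{-4})=315/512<5/8$; at $q=2$ the infinite product is about $0.578$. Euler's bound $\prod_{i\geqs 1}(1-x^i)>1-x-x^2$ cannot simply be re-indexed to start at $i=2$.

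Second, your unitary pairs go the wrong way. We have $(1+q^{-(2j-1)})(1-q^{-2j})=1+(q-1)q^{-2j}-q^{1-4j}>1$, e.g.\ $(1+2^{-3})(1-2^{-4})=135/128$. For odd $n$ your scheme also leaves the factor $1+q^{-n}>1$ unpaired. So as written you have established neither the lower bound for $|\PGL_n(q)|$ nor $|\PGU_n(q)|<q^{n^2-1}$, and hence not the bound on $|\GU_n(q)|$.

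Both are easy to repair. For the lower bound, $\prod_{i=2}^n(1-q^{-i})\geqs 1-\sum_{i=2}^n q^{-i}>1-\frac{1}{q(q-1)}\geqs\frac12$; alternatively, divide Euler's bound by $1-q^{-1}$. For the unitary upper bound, pair each even-index factor with the following odd one: $(1-q^{-2j})(1+q^{-(2j+1)})=1-(q-1)q^{-2j-1}-q^{-4j-1}<1$. This leaves one extra factor $1-q^{-n}<1$ when $n$ is even.

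Two smaller points. Your proposed direct check of $m=1$ in (iii) cannot succeed, because the inequality is false there: $|\mathrm{SO}^-_2(q)|=q+1>q=q^{m(2m-1)}$. Similarly $|\PGL_1(q)|=1\not<q^{0}$ in (i). These parts must be read with $m\geqs 2$ and $n\geqs 2$ respectively, which covers every application in the paper. Also, ``up to the factor $(2,q)$ conventions'' is not harmless. For $q$ even the upper bound in (iii) holds only if $\mathrm{SO}^\e_{2m}(q)$ denotes the index-two subgroup $\Omega^\e_{2m}(q)$ of $\mathrm{O}^\e_{2m}(q)$. That is the reading forced by the equality with $|\mathrm{Inndiag}(\POmega^\e_{2m}(q))|$.
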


In the next two results, we shall use some ideas and notation from \cite[Section 3.2.1]{BG_classical}. 
For a prime $r$ and prime power $q$ such that $r \nmid q$, we write $\Phi(r, q)$ for the minimal $i \in \mathbb{Z}_{>0}$ such that $r \mid q^i-1$.
Each element $x \in \GL_n(q)$ of prime order $r$, with $(r, q) = 1$ and $i := \Phi(r, q)$, fixes a direct sum decomposition $V = U_1 \oplus \cdots \oplus U_s \oplus C_x$, where each $U_j$ is an $i$-dimensional space on which $x$ acts irreducibly, and $x$ acts trivially on $C_x$. Over $\bbF_{q^i}$ the element $x|_{U_j}$ has $i$ distinct eigenvalues, and we write
$$x = [\Lambda_1^{a_1}, \ldots, \Lambda_t^{a_t}, I_e]$$ to denote that over $\bbF_{q^i}$ the set $\Lambda_j$ of $i$ eigenvalues occurs with multiplicity $a_i$, so that $n = i\sum_{j}a_j + e$. This determines $x$ up to $\GL_n(q)$-conjugacy.

\begin{lem}\label{l:class_uni} 
Suppose that $T$ is a classical group that has natural module $V$ of dimension at least three, is not in $\mathcal{A}$ and is not equal to $\POmega_{2m}^+(q)$ with $m\geqs 4$.  Let $\ell = 1$ if  $T  \in \{\UU_{2m}(q), \Omega_{2m+1}(q)\}$, and $\ell = 0$ otherwise; and let $\alpha = q-1$ if $T = \LL_n(q)$, and $\alpha = q+1$ otherwise. 
Let $V_2$ be a non-degenerate $\ell$-space of $V$, and write $V = V_1 \perp V_2$, with $V_1$ of minus type when $T = \Omega_{2m+1}(q)$.
Then $\Aut(T) \cap \PGL(V)$ contains an element $\widehat{y}$ of order $c$, as given in 
Table~\ref{tab:class_uni}, such that
$\la \widehat{y} \ra$ acts irreducibly on $V_1$ and trivially on $V_2$.

Let $y$ be a generator of $\la \widehat{y} \ra \cap T$.
Then all of the following properties hold.
    \begin{itemize}\addtolength{\itemsep}{0.2\baselineskip}
		\item[{\rm (i)}] $C_{\Aut(T)}(y) = \la \widehat{y} \ra$ is cyclic of order $c$.
		\item[{\rm (ii)}] Each $t\in\Aut(T)$ that inverts $y$ is $\Aut(T)$-conjugate to an involution $t_0$ whose centraliser is described in Column 3, and there are at most $c$ such involutions $t$. 
        \item[{\rm (iii)}] The value of $b_0$ in Column 4 is a  lower bound for $|t_0^{\Aut(T)}|$ (we combine various cases for $q$ and $n$). 
    \end{itemize}
\end{lem}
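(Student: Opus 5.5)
This is a case-by-case verification over the families in Table~\ref{tab:class_uni}. In each family, $\widehat{y}$ is taken to generate a Singer-type torus on $V_1$: a cyclic subgroup of $\GL(V_1)$ of order $q^{n}-1$, or of order $q^{n}+1$ in the unitary, symplectic and orthogonal minus-type cases, acting irreducibly. We intersect it with the relevant isometry or similarity group, reduce modulo scalars, and extend trivially to $V_2$. Since $V_2$ has dimension $\ell\leqs 1$ and $V_1$ is chosen of minus type when $T=\Omega_{2m+1}(q)$, the group $\la\widehat{y}\ra$ is a maximal torus of $\mathrm{Inndiag}(T)$ acting irreducibly on $V_1$. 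Its order $c$ can then be read off by standard computations, for instance $|\la \widehat{y}\ra| = (q^n-1)/(q-1)$ for $\LL_n(q)$. The element $y$ generates $\la\widehat{y}\ra\cap T$, which has index dividing $|\mathrm{Inndiag}(T):T|$.

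For (i), the first step is to show that $y$ still acts irreducibly on $V_1$, or at least that it is regular semisimple with distinct eigenvalues forming a single Frobenius orbit. This uses a primitive prime divisor $r$ of $q^{e}-1$ for the appropriate $e$; Zsigmondy's theorem applies, and its exceptional cases are covered by $\mathcal{A}$ and the excluded small cases. Schur's lemma then gives $C_{\GL(V_1)}(y)=\bbF_{q^e}^\times$, so $C_{\mathrm{Inndiag}(T)}(y)=\la\widehat{y}\ra$. It remains to rule out centralising field and graph automorphisms. A field automorphism $\phi$ centralising $y$ would force the eigenvalue set of $y$ to be stable under $\lambda\mapsto\lambda^{p^i}$ with $\lambda$ fixed. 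Because $y$ has order divisible by the primitive prime divisor, this would force $\lambda$ into a proper subfield, which is a contradiction. Graph automorphisms of $\LL_n(q)$ (inverse-transpose) map $y$ to a conjugate of $y^{-1}$, and these are handled in the same way as inverting elements. This gives $C_{\Aut(T)}(y)=\la\widehat{y}\ra$.

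For (ii), the elements of $\Aut(T)$ inverting $y$ form a coset of $C_{\Aut(T)}(y)$, so there are at most $c$ of them. Such an element normalises $\la\widehat{y}\ra$, so it lies in $N_{\Aut(T)}(\la\widehat{y}\ra)$, which is an extension of the torus by a cyclic group of field-type automorphisms of $\bbF_{q^e}$ of order $e$, together with any field and graph automorphisms. The inverting elements are those acting on $\bbF_{q^e}$ by $\lambda\mapsto\lambda^{-1}$. In the unitary, symplectic and orthogonal cases this is the map $\lambda\mapsto\lambda^{q^{e/2}}$, and in the linear case it is a graph automorphism composed with the torus. Taking a square root of the torus component, every such element is torus-conjugate to an involution $t_0$. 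We identify $t_0$ from its $\pm1$-eigenspace dimensions on $V_1$ (half and half, adjusted by $V_2$), or as a graph involution in the linear case. Its centraliser type is then read from the standard tables of involution classes (e.g.\ \cite{BG_classical}) and recorded in Column~3.

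For (iii), $|t_0^{\Aut(T)}|\geqs |\mathrm{Inndiag}(T)|/|C_{\mathrm{Inndiag}(T)}(t_0)|$ divided by a small factor. We bound this below using Lemma~\ref{l:order}, taking upper bounds for centraliser orders and lower bounds for $|\mathrm{Inndiag}(T)|$. The main obstacle is (ii): making the inversion argument uniform across types and both parities of $q$, and keeping track of graph automorphisms for $\LL_n$ and of the $V_2$ summand for $\UU_{2m}$ and $\Omega_{2m+1}$.
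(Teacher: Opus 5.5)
\textbf{There is a genuine gap in the conjugacy claim of (ii).} First, how your route compares. The paper proves (i) from the fact that the centraliser of the irreducible element $y|_{V_1}$ in $\PGL(V_1)$ is a Singer cycle. It proves (iii) by bounding $|T:C_T(t_0)|$ with Lemma~\ref{l:order}. It does not derive Column~3 or the conjugacy statement in (ii) at all: both are quoted from \cite[Table~4 and p.~587]{LL_chiral}. Your primitive-prime-divisor/Schur argument for (i), your coset count giving at most $c$ inverting elements, and your (iii) are reasonable self-contained substitutes.

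The failing step is ``taking a square root of the torus component, every such element is torus-conjugate to $t_0$''. This only works when $c$ is odd. Since $t_0$ inverts $\la\wy\ra$, conjugation by $u\in\la\wy\ra$ sends $t_0$ to $t_0u^2$. So the $\la\wy\ra$-class of $t_0$ is $t_0\la\wy^2\ra$, which is only half of the coset $t_0\la\wy\ra$ when $c$ is even. In that case the reflections of the dihedral group $I_{\Aut(T)}(y)$ split into two classes. For $q$ odd this happens in the rows $\LL_n(q)$ with $n$ even, $\UU_n(q)$ with $n$ even, $\PSp_{2m}(q)$, $\Omega_{2m+1}(q)$ and $\POmega^-_{2m}(q)$. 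You would then need a further element of $\Aut(T)$ fusing the two classes, and in general none exists.

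Consider $T=\PSp_{2m}(q)$ with $q$ odd. Here $\wy\in\mathrm{PGSp}_{2m}(q)\setminus T$, so $I_{\Aut(T)}(y)\cap T$ is an index-$2$ dihedral subgroup containing exactly one of the two reflection classes. The other class lies outside the normal subgroup $T$, so it cannot be $\Aut(T)$-conjugate to the first.

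Explicitly, on $V=\bbF_{q^{2m}}$ the inverting maps $x\mapsto\lambda x^{q^m}$ square to the scalar $\nu=\lambda^{q^m+1}\in\bbF_q^\times$ and have multiplier $-\nu$. If $\nu$ is a square you get type $t_m$ ($\GL_m(q)$-type centraliser); otherwise you get type $t_m'$ ($\GU_m(q)$-type). Both occur: for instance, $\PSp_4(7)$ has $25$ inverting involutions of each kind. Column~3 records only the class meeting $T$, so in this row (ii) is not provable as literally stated. The paper's proof avoids the issue only by deferring to \cite{LL_chiral}.

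What does hold, and is all that Corollary~\ref{cor:q_bound} uses, is weaker: every involution in $I_{\Aut(T)}(y)\setminus C_{\Aut(T)}(y)$ has $\Aut(T)$-class of size at least $b_0$. This is true because $b_0$ is derived from an upper bound on centraliser orders that also covers the $\GU_m(q)$-type class. So in the even-$c$ rows you must identify both torus-classes separately and prove this weaker statement.

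A smaller point concerns the unitary rows, where $t_0\notin\PGU_n(q)$. The group $\la\wy\ra$ acts irreducibly on the odd-dimensional space $V_1$, and $\mathrm{Gal}(\bbF_{q^{2(n-\ell)}}/\bbF_{q^2})$ has odd order $n-\ell$. Hence no element of $\PGU_n(q)$ inverts $\wy$, and your map $\lambda\mapsto\lambda^{q^{e/2}}$ is $\bbF_{q^2}$-semilinear. So $t_0$ is a graph involution, and its class (centraliser $\mathrm{PSO}^\epsilon_n(q)$ or $[q^{n-1}].\Sp_{n-2}(q)$) cannot be read off from $\pm1$-eigenspace dimensions. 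You need the classification of graph involutions of $\UU_n(q)$ instead.
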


{\small
	\centering
	\begin{table}
		\caption{Triples $(T,y,t_0)$ with $T\notin\mathcal{A}$ a classical group, $y \in T$ and $t_0 \in I_{\Aut(T)}(y) \setminus C_{\Aut(T)}(y)$}
		\begin{tabular}{@{}llll@{}}
			\toprule
			$T$ & $c$  & $C_T(t_0)$ & $b_0$ \\ \midrule
            $\LL_3^\e(q)$ & $(q^n-\e)/\alpha$ & $\PGL_2(q)$ 
            & $q^2(q^3-\e)$ \\ \hline
			$\LL_n(q)$,  & $(q^n-1)/\alpha$ & $\mathrm{PSO}_n^\epsilon(q)$, $q$ or $n$ odd & $\frac{1}{2\alpha}q^{\frac{n^2}{2}+\frac{n}{2}-1}$\\
			$n\geqs 4$ && $[q^{n-1}].\Sp_{n-2}(q)$, $q$ and $n$ even & \\
            \hline
            $\UU_4(q)$ & $q^3 + 1$ & $[q^3].\Sp_2(q)$, $q$ even & $\frac{1}{4}q^2(q^3+1)(q^4-1)$
            \\
            & & $\mathrm{PSO}_4^\epsilon(q)$, $q$ odd & \\
            \hline 
			$\UU_n(q)$,  & $(q^n+1)/\alpha$ & $\mathrm{PSO}_n(q)$ & $\frac{1}{2\alpha}q^{\frac{n^2}{2} + \frac{n}{2}-1}$\\
            $n\geqs 5$ odd & \\
            \hline 
 			$\UU_n(q)$, & $q^{n-1}+1$ & $\mathrm{PSO}_n^\epsilon(q)$, $q$ odd & $\frac{1}{2\alpha}q^{\frac{n^2}{2} + \frac{n}{2}-1}$\\
			$n \geqs 6 $ even & & $[q^{n-1}].\Sp_{n-2}(q)$, $q$ even 
            \\
            \hline 
            $\PSp_4(q)$ & $q^2 + 1$ & $(\GU_2(q)/2).2$, $q \equiv 3 \pmod 4$ & $\frac{1}{2}q^3(q^2 + 1)(q - 1)$\\
            & & $(\GL_2(q)/2).2$, $q \equiv 1 \pmod 4$ & \\
            && $[q^4]$, $q$ even &\\
            \hline 
			$\PSp_{2m}(q)$ & $q^m+1$ & $(\GU_m(q)/2).2$, $q\equiv 3 \pmod 4$ & $\frac{1}{4\alpha}q^{m^2+m+1}$\\
            $m \geqs 3$ && $(\GL_m(q)/2).2$, $q \equiv 1 \pmod 4$ &\\
			&& $[q^{\frac{1}{2}(m^2+3m-2)}].\Sp_{m-2}(q)$, $q$ and $m$ even&\\
			&& $[q^{\frac{1}{2}(m^2+m)}].\Sp_{m-1}(q)$, $q$ even, $m$ odd&\\
            \hline 
			$\Omega_{2m+1}(q)$ & $q^m+1$ & $(\Omega_m^\epsilon(q)\times \Omega_{m+1}^{\epsilon'}(q)).2^2$ & $\frac{1}{4}q^{m^2+m}$\\
			\hline 
			$\POmega_{2m}^-(q)$ & $q^m+1$ & $(\Omega_m^\epsilon(q)\times\Omega_m^{-\epsilon}(q)).2^2$, $q$ odd & $\frac{1}{8}q^{m^2}$\\
			&& $[q^{\frac{1}{2}(m^2+m-2)}].\Sp_{m-2}(q)$, $q$ and $m$ even\\
			&& $[q^{\frac{1}{2}(m^2-m)}].\Sp_{m-1}(q)$, $q$ even, $m$ odd\\
			\bottomrule
		\end{tabular}
		\label{tab:class_uni}
	\end{table}
}

Unfortunately our main source \cite{LL_chiral} for Table~\ref{tab:class_uni} has an error in the lines for $\POmega^+_{2m}(q)$ (the centraliser contains the direct product $C_{q+1} \times C_{(q^{m-1}+1)/(q-1, 2)}$), so we shall consider those groups separately.

\begin{proof} 
If $T = \LL_3^\e(q)$, then the existence of $\wy$ is clear. 
Then 
$C_{\Aut(T)}(y) = \la \wy \ra$, and there is an involutory graph automorphism $t_0$ of $T$ that inverts $y$. In the notation of \cite[Sections 3.2.5 and 3.3.5]{BG_classical}, this involution $t_0$ is of type $\gamma_1$, and so has a centraliser in $\PGL^\e_3(q)$ of order $|\Sp_2(q)|$. 
Hence $|t_0^{\Aut(T)}| \geqs |t_0^{\PGL^{\e}_3(q)}| = |\PGL^\e_3(q)|/|\PGL_2(q)|$ 
is as given.

For $T\ne \LL_3^\e(q)$, the second and third column of Table~\ref{tab:class_uni} are essentially \cite[Table~4]{LL_chiral}. Since $\wy$ and $y$ act irreducibly on $V_1$, the centraliser in $\PGL(V_1)$ of $y|_{V_1}$ is a Singer cycle on $V_1$, and hence is cyclic. Since $\dim(V_2) \leqs 1$ it follows that the centraliser in $\Aut(T) \cap \PGL(V)$ of $y$ is cyclic. Furthermore, field automorphisms introduce no new centralising elements, so $C_{\Aut(T)}(y) = \la \wy \ra$ is cyclic.

For Part (ii), it is noted in \cite[p. 587]{LL_chiral} that all involutions in $\Aut(G)$ that invert $y$ are conjugate to $t_0$. Furthermore, in all cases $|I_{\Aut(T)}(y)| = 2c$, and in particular there are at most $c$ involutions which invert $y$, since these  lie in $I_{\Aut(T)}(y)\setminus C_{\Aut(T)}(y)$. 

For Part (iii), we note in all cases that $|t_0^{\Aut(T)}| \geqs |t_0^T| = |T:C_T(t_0)|$. 
Suppose first that $T = \LL_n^\e(q)$ with $n \geqs 4$. 
If $T \ne \UU_4(q)$ then Lemma~\ref{l:order} shows that $|C_T(t_0)| < q^{n(n-1)/2}$, whilst $|T| \geqs \frac{1}{2\alpha}q^{n^2-1}$, so the result follows.
If instead $T = \UU_4(q)$ then $|C_T(t_0)| = q^4(q^2-1)$ if $q$ is even and $\frac{1}{2}q^2(q^4-1)$ otherwise, which yields $|C_T(t_0)|\leqs q^4(q^2-1)$ in all cases. The result follows from $|T| \geqs  \frac{1}{4}q^6(q^2-1)(q^3+1)(q^4-1)$. 

Suppose next that $T = \PSp_{2m}(q)$. If $m = 2$ then $|C_T(t_0)| \leqs   (q + 1)q(q^2-1)$ if $q$ is odd, whilst $|C_T(t_0)| = q^4$ if $q$ is even, so
in all cases $|C_T(t_0)| \leqs (q+1)q(q^2-1)$. Furthermore $|\PSp_4(q)| \geqs\frac{1}{2}q^4(q^2-1)(q^4-1)$, so the result for $m = 2$ follows.
If $m\geqs 3$ then one can check using Lemma~\ref{l:order} that $|C_T(t_0)| < (q+1)q^{m^2-1}$ in all cases, whilst $|T| \geqs \frac{1}{2}|\Sp_{2m}(q)| \geqs\frac{1}{4}q^{m(2m+1)}$, so the result follows. 

We turn to the orthogonal groups. Lemma~\ref{l:order}(ii),(iii) shows that for all $n$ we can bound $|\Omega_{n}^\epsilon(q)| = \frac{1}{2}|\mathrm{SO}_{n}^\epsilon(q)| < \frac{1}{2}q^{n(n-1)/2}$. Thus, if $T = \Omega_{2m+1}(q)$ then $|C_T(t_0)| \leqs q^{m^2}$, whilst $|T| \geqs \frac{1}{4}q^{m(2m+1)}$ by Lemma~\ref{l:order}(ii), giving the required bound.
Finally, suppose $T = \POmega_{2m}^-(q)$. 
If $q$ is even, then $|C_T(t_0)| < q^{m^2-m}$ by 
Lemma~\ref{l:order}(ii), and arguing as in the previous case shows that $|C_T(t_0)| < q^{m^2-m}$ also holds if $q$ is odd. Noting that $|T| \geqs \frac{1}{4}|\mathrm{PO}^-_{2m}(q)| > \frac{1}{8}q^{m(2m-1)}$ gives the result. 
\end{proof}

{\small
	\centering
	\begin{table}
		\caption{Elements $z\in C_{\Aut(T)}(y)$ of prime order $r$ and their centralisers}
		\begin{tabular}{@{}lllll@{}}
			\toprule
			$T$ & $|C_L(z)|$  & $|z^{\Aut(T)}|\geqs$ & Conditions\\ \midrule
            $\LL_3(q)$ & $|\GL_1(q^3)|/(q-1)$ & $q^3(q^2-1)(q-1)$ & $i > 1$ \\
            & $3|\GL_1(q^3)|/(q-1)$ & $q^3(q^2-1)(q-1)/3$ & $i = 1$  \\
            \hline
			$\LL_n(q)$, & $|\GL_{n/i}(q^i)|/(q-1)$ & $\frac{1}{2}(q-1)q^{n^2/2-1}$ & $i > 1$  \\
            $n\geqs 4$  & $r|\GL_{n/r}(q^r)|/(q-1)$ & 
            $\frac{1}{4}(q-1)q^{n^2/2-1}$ & $i = 1$ \\
            \hline
            $\UU_3(q)$ & $|\GU_1(q^3)|/(q+1)$ & $q^3(q^2-1)(q+1)$ & $i > 2$\\
            & $3|\GU_1(q^3)|/(q+1)$ & $q^3(q^2-1)(q+1)/3$ & $i \leqs 2$\\
            \hline
            $\UU_n(q)$, &  $|\GU_{2n/i}(q^{i/2})|/(q+1)$ & $\frac{1}{2}q^{2n^2/3}$ & $i > 2$ \\
            $n\geqs 5$ odd &  $r|\GU_{n/r}(q^r)|/(q+1)$ & $\frac{1}{6}q^{2n^2/3}$ & $i \leqs 2$ \\
            \hline
            $\UU_n(q)$,  
            & $|\GU_{2(n-1)/i}(q^{i/2})|$ & $\frac{1}{4}q^{2(n^2+n-4)/3}$ & $i > 2$\\
            $n$ even  & $|\GU_{n-1}(q)|$ & $q^{n-1}(q^n-1)/(q+1)$ & $i \leqs 2$\\
            \hline
            $\PSp_{4}(q)$ & $|\GU_{4/i}(q^{i/2})|$ & $q^3(q^2+1)(q-1)$ & $i > 1$ \\
            & $2|\GU_m(q)|$ & $\frac{1}{2}q^3(q^2+1)(q-1)$ & $i = 1$ \\
                        \hline
            $\PSp_{2m}(q)$ & $|\GU_{2m/i}(q^{i/2})|$ & $\frac{1}{2}q^{m^2+m+1}/(q+1)$ & $i > 1$ \\
            $m \geqs 3$ & $2|\GU_m(q)|$ & $\frac{1}{4}q^{m^2+m+1}/(q+1)$ & $i = 1$ \\
            \hline
            $\Omega_{2m+1}(q)$ & $|\GU_{2m/i}(q^{i/2})|$ & $\frac{1}{2}q^{m^2+m+1}/(q+1)$ & $i > 1$ \\
            & $2|\mathrm{SO}_{2m}^-(q)|$ & $\frac{1}{2}q^m(q^m-1)$ & $i = 1$ \\
            \hline 
            $\POmega_{2m}^-(q)$  & $|\GU_{2m/i}(q^{i/2})|$ & $\frac{1}{2}q^{m^2-m+1}/(q+1)$ & $i > 1$ \\
            & $|\GU_m(q)|$ & $\frac{1}{2}q^{m^2-m+1}/(q+1)$ & $i = 1$, $m$ odd \\
            & $2|\mathrm{SO}_m^-(q^2)|$ & $\frac{1}{2}q^{m^2-m+1}/(q+1)$ & $i = 1$, $m$ even \\
			\bottomrule
		\end{tabular}
		\label{tab:class_centr}
	\end{table}
}

With $c$ as in Lemma~\ref{l:class_uni}, notice that if $r \mid c$ is  prime then 
$r \nmid q$. 
 
\begin{lem}\label{l:z_elts}
    Suppose $T$ is a classical group with natural module $V$ of dimension $n \geqs 3$,
    such that $T \not\in \mathcal{A}$ and $T \ne \POmega_{2m}^+(q)$ with $m\geqs 4$. Let $L = \mathrm{Inndiag}(T)$, let $y \in T$ be as described in Lemma~\ref{l:class_uni}, and let $z \in C_{\Aut(T)}(y)$ have prime order $r$. 
       
    
    Then $|C_L(z)|$ and a lower bound on $|z^{\Aut(T)}|$ are as described in Table~\ref{tab:class_centr}, where $i = \Phi(r,q)$. Moreover, $z$ is $\Aut(T)$-conjugate to the involution $t_0$ from Lemma~\ref{l:class_uni} if and only if $T = \PSp_{2m}(q)$ with $q\equiv 3\pmod 4$.
\end{lem}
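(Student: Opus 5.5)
Since $C_{\Aut(T)}(y) = \la \wy \ra$ is cyclic of order $c$ by Lemma~\ref{l:class_uni}(i), any $z \in C_{\Aut(T)}(y)$ of prime order $r$ is a power of $\wy$. Hence $z \in \PGL(V)$ acts on $V_1$ as an element of the Singer cycle $\la \wy|_{V_1}\ra$, and trivially on $V_2$. The plan is to read off the structure of $z$ from this. Put $i = \Phi(r,q)$. In the unitary cases work over $\bbF_{q^2}$, so $i$ is compared with $2$; in the symplectic and orthogonal cases the relevant cycle is $C_{q^m+1}$, so $i$ is even or equal to $1$.

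First I would treat the case where $i$ is large ($i>1$, respectively $i>2$ in the unitary cases). Then a preimage of $z$ in $\GL(V)$ can be taken of order $r$ with no eigenvalue $1$ on $V_1$. So $z|_{V_1}$ is a semisimple element $[\Lambda_1^{a_1},\ldots,\Lambda_t^{a_t}]$ all of whose eigenvalue blocks are of size $i$. Because $z$ is a power of a Singer cycle element, the Galois-orbit structure forces all $a_j$ equal, so $z$ is homogeneous. Its centraliser in the isometry group of $V_1$ is then $\GL_{n/i}(q^i)$, or $\GU_{2n/i}(q^{i/2})$ in the cases with a form, exactly as in \cite[Section 3.2.1]{BG_classical}. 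When $V_2\ne 0$, intersecting with the stabiliser of $V_2$ gives the entries in the even-dimensional unitary and odd-dimensional orthogonal rows. Quotienting by scalars gives $|C_L(z)|$.

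Next I would treat the small-$i$ case ($i=1$, or $i\leqs 2$ for unitary). Here $z$ is the image of a scalar-multiplied element: a preimage $\widehat{z}$ satisfies $\widehat{z}^r = \lambda I$, and $z$ has order $r$ only projectively. This is the situation where $r \mid (q-\e, n)$, or $r=2$ in the symplectic and orthogonal cases. So $C_L(z)$ contains the extra factor $r$ coming from elements permuting the eigenspaces of $\widehat{z}$ over $\bbF_{q^r}$. For example, in $\LL_n(q)$ one gets $r|\GL_{n/r}(q^r)|/(q-1)$; for $\PSp_{2m}(q)$ the involution $z$ is of type $\GU_m(q).2$, and $\Omega_{2m+1}(q)$ gives $2|\mathrm{SO}^-_{2m}(q)|$. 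I would identify the type of $z$ in each family by comparing with the involution and prime-order class tables in \cite[Chapter 3]{BG_classical}.

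The lower bounds on $|z^{\Aut(T)}|$ then follow from $|z^{\Aut(T)}|\geqs |z^{L}| = |L|/|C_L(z)|$, together with the order estimates of Lemma~\ref{l:order}. One divides by $r$ or $3$ in the scalar cases, using $r\leqs n$, and where needed uses $q^{i/2}$-bounds that are worst at the minimal admissible $i$. For the final assertion, note that $z$ can be conjugate to $t_0$ only if $r=2$. So I would compare the involution classes: $t_0$ has the centraliser in Column 3 of Table~\ref{tab:class_uni}, and $z$ has the centraliser just computed. These agree only for $\PSp_{2m}(q)$ with $q\equiv 3\pmod 4$, where both are of type $\GU_m(q).2$. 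In every other case the centraliser orders differ, or $z$ lies in $L$ while $t_0$ is a graph or outer automorphism, or $c$ is odd so that no involution $z$ exists.

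The main obstacle is the small-$i$ analysis. It has to be carried out uniformly, with care in the unitary and even-dimensional orthogonal cases, where the correct projective lifts and the type of $z$ (for instance $m$ odd versus $m$ even for $\POmega^-_{2m}(q)$) must be determined. It also requires checking that the resulting bounds, which are somewhat crude, genuinely hold for all $T\notin\mathcal{A}$; small cases may need direct verification.
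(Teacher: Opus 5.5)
Your plan is essentially the paper's proof. It has the same ingredients: homogeneity of $z$ on $V_1$ because $z\in\la\wy\ra$ is a power of an element acting irreducibly there; identification of the type of $z$ and of $|C_L(z)|$ from the tables of \cite{BG_classical}; the bound $|z^{\Aut(T)}|\geqs|L:C_L(z)|$ via Lemma~\ref{l:order}; and a comparison of involution centralisers for the final assertion.

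One slip needs correcting in your small-$i$ paragraph. When $V_2\neq 0$ (that is, $T=\UU_n(q)$ with $n$ even, or $T=\Omega_{2m+1}(q)$), the element $z$ acts as a scalar on $V_1$ and trivially on $V_2$. So $z$ is a genuine element of order $r$, of type $t_1$ or $[I_1,\lambda I_{n-1}]$ in the unitary case and of type $t_m'$ in the orthogonal case, not one of order $r$ only projectively. In particular, in the unitary case $|C_L(z)|=|\GU_{n-1}(q)|$ carries no extra factor $r$.
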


\begin{proof}
    Let $\beta = 2$ 
    if $T$ is unitary, and $\beta = 1$ otherwise, and let $\ell \in \{0, 1\}$ be as in Lemma~\ref{l:class_uni}.
    Throughout the proof, we shall make extensive use of the fact that $y$ and $\wy$ act irreducibly on the subspace $V_{1}$ from Lemma~\ref{l:class_uni}, which is
    of dimension $n-\ell$ over $\bbF_{q^\beta}$, and is non-degenerate if $T \neq \LL_n(q)$. 
    Hence from $z \in C_{\Aut(T)}(y) = \la \wy \ra$ we deduce that $z|_{V_{1}}$ is similar to a block diagonal matrix with all blocks identical, and so 
    $i \mid \beta(n-\ell)$. By a standard calculation (see for example \cite[Lemma A.1]{BG_classical}) if $r$ is an odd prime divisor of $q^m+1$ then $\Phi(r, q)$ is even, and  in this case $\Phi(r, q) \equiv 0 \pmod 4$ if and only if $m$ is even.
    In each case we shall first find $|C_{L}(z)|$, then use $|z^{\Aut(T)}| \geqs |z^L| = |L:C_L(z)|$. 

    Suppose $T = \LL_n(q)$, so $L = \PGL_n(q)$, $\ell = 0$ and $r$ divides $(q^n-1)/(q-1)$. Consulting \cite[Tables~B.1 and B.3]{BG_classical}, we see that if $i = 1$ then $r \mid n$, and in all cases $z$ is of type $[\Lambda^{n/r}]$ or $t_{n/2}'$, and  $|C_{L}(z)|$ is as given in Table~\ref{tab:class_centr}.  
    For $n = 3$ we easily get the given bound on $|z^{\Aut(T)}| \geqs |L:C_L(z)|$. 
    For $n \geqs 4$ we first use Lemma~\ref{l:order}(i) to bound $|L| \geqs \frac{1}{2}q^{n^2-1}$. Next, if $i > 1$ then $|C_L(z)| \leqs (q-1)^{-1}q^{n^2/i}$, whilst if $i = 1$ then $|C_L(z)|  \leqs r(q-1)^{-1}q^{n^2/r}$. The bound on $|z^{\Aut(T)}| \geqs |L:C_L(z)|$ follows in both cases.
    For $r = 2$, comparing centralisers with Lemma~\ref{l:class_uni} shows that $z$ is not conjugate to $t_0$.

    Our next case is $T = \UU_n(q)$, so $L = \PGU_n(q)$. Suppose first that $n$ is odd, so that $\ell = 0$. Then $|\wy| = (q^n + 1)/(q + 1)$ is odd, and hence $r$ is an odd  divisor of $q^n+1$, and $i \equiv 2 \pmod 4$.  We consult \cite[Table B.4]{BG_classical} and see that 
    if $i > 2$  then $z$ is of the form $[\Lambda^{2n/i}]$, whilst if $i = 2$ then $r$ divides $(q+1, n)$ and $z$ is of the form $[\Lambda^{n/r}]$. In both cases we find $|C_{L}(z)|$ as given, and use Lemma~\ref{l:order}(i) to bound
    \[ |L:C_L(z)| \geqs  \begin{cases}
        \frac{q^{n^2-1}/2}{q^{2n^2/i - 1}} \geqs \frac{1}{2}q^{\frac{2}{3}n^2} & \mbox{ if $i > 2$, so that $i \geqs 6$}\\
        \frac{q^{n^2 - 1}/2}{rq^{n^2/r - 1}} = \frac{1}{2r}q^{n^2(1 - 1/r)} \geqs \frac{1}{6}q^{\frac{2}{3}n^2} & \mbox{ if $i = 2$, so $r \mid (q+1)$.}\\
    \end{cases}
      \]  
      
    Suppose instead that $n$ is even, so 
    $\dim(V_1) = n-1$ and $|\wy| = q^{n-1}+1$, so that again if $r$ is odd then $i \equiv 2 \pmod 4$. 
    From the irreducibility of $\la \wy \ra$ on $V_{1}$ we deduce from \cite[Table~B.4]{BG_classical} that 
    if $i > 2$ then 
    $z$ is of the form $[\Lambda^{2(n-1)/i}, I_1]$ with $|C_{L}(z)|$ as given. Hence if $i > 2$ then we can bound $|L| \geqs \frac{1}{2}q^{n^2-1}$ and
    \begin{equation*}
    |C_L(z)| \leqs (q^{i/2}+1)q^{2(n-1)^2/i-i/2} \leqs 2q^{2(n-1)^2/i} \leqs 2q^{(n-1)^2/3}
    \end{equation*}
    to bound $|z^{\Aut(T)}|$. 
    If $i = 2$ or $r = 2$ (so $i = 1$) then $r \mid (q+1)$, the element $z$ is of the form $t_1$ 
   or $[I_1, \lambda I_{n-1}]$ for some primitive $r$th root of unity $\lambda$, 
    and $|C_L(z)|$ is as given. 
    Here we use the exact value for $|L| = |\PGU_n(q)|$ and $|C_L(z)| = |\GU_{n-1}(q)|$ to bound $|z^{\Aut(T)}|$. 
Again, comparing centralisers with Lemma~\ref{l:class_uni} shows that if $r = 2$ then $z$ is not conjugate to $t_0$.

Our next case is $T = \PSp_{2m}(q)$, for which $L = \mathrm{PGSp}_{2m}(q)$, the group $\la y \ra$ is again irreducible, and $r \mid q^{m}+1$, so that if $r$ is odd then again $i$ is even. In particular,  $i = 1$ if and only if $r = 2$, which happens if and only if $q$ is odd. If $i > 1$ then we see from \cite[Table B.7]{BG_classical} that $z$ is of the form $[\Lambda_s^{2m/i}]$, with $|C_L(z)|$ as given. Parts (i) and (ii) of Lemma~\ref{l:order} yield
	\begin{equation*}
	|L:C_L(z)| = \frac{|\mathrm{PGSp}_{2m}(q)|}{|\GU_{2m/i}(q^{i/2})|} \geqs \frac{|\mathrm{PGSp}_{2m}(q)|}{|\GU_{m}(q)|} \geqs \frac{1}{2}(q+1)^{-1}q^{m^2+m+1},
	\end{equation*}
    with a more precise calculation giving the value when $m = 2$.
If $i = 1$  and $r = 2$, then $z$ is an involution of type $t_{m}'$ (see \cite[Section 3.4.2.5]{BG_classical}), and so $|C_L(z)|$ and an upper bound on $|z^{L}|$ are as given.  Comparing centralisers, and looking in Table \cite[Table B.7]{BG_classical} at all other involution centralisers, we see that $z$ is $\Aut(T)$-conjugate to $t_0$ if and only if $q \equiv 3 \pmod 4$. 

Our final case is $T = \POmega_{n}^\varepsilon(q)$, with $n \in \{2m, 2m+1\}$ and $\epsilon \in \{\circ, -\}$.
Here $\la y \ra$ acts irreducibly on a non-degenerate $2m$-space $V_1$, and on its orthogonal complement, and $|\wy| = q^{m}+1$. Hence $i = 1$ if and only if $r = 2$, which happens if and only if $q$ is odd, and otherwise $i$ is even. 
If $i > 1$ then we see from \cite[Table~B.12]{BG_classical} that $z$ is of the form $[\Lambda^{(2m-\ell)/i},I_{\ell}]$, and hence $|C_{L}(z)|$ is as given, with the bound on $|z^{\Aut(T)}| \geqs |L:C_L(z)|$ following from an easy calculation.
If $i = 1$ (so $r = 2$) and $n = 2m+1$ then $z$ is an involution of type $t_m'$ (see \cite[Section 3.5.2.2]{BG_classical}), whence 
\begin{equation*}
|z^{\Aut(T)}|\geqs |L:C_L(z)| = \frac{|\mathrm{SO}_{2m+1}(q)|}{2|\mathrm{SO}_{2m}^-(q)|} = \frac{1}{2}q^m(q^m-1).
\end{equation*}
Finally, assume $i = 1$ (so $r = 2$) and $\varepsilon = -$. If $m$ is even then $z$ is an involution of type $t_{m/2}'$, whereas if $m$ is odd then $z$ is of type $t_m$ (see \cite[Sections 3.5.2.9 and 3.5.2.13]{BG_classical}). In both cases, Lemma~\ref{l:order} shows that $|C_L(z)| \le |\GU_{m}(q)| \le (q+1) q^{m^2-1}$, whilst $|L| \geqs \frac{1}{2}q^{m(2m-1)}$, so the result follows. 
       \end{proof}

\begin{cor}\label{cor:q_bound}
     Suppose that $T$ is a classical group with natural module $V$ of dimension $n \geqs 3$,
    such that $T \not\in \mathcal{A}$ and $T \ne \POmega_{2m}^+(q)$ with $m\geqs 4$.  Let $y \in T$, $t_0 \in \Aut(T)$ and $c \in \mathbb{Z}$ be as in Lemma~\ref{l:class_uni}, and let $C = C_{\Aut(T)}(y)$. Let $\beta = 2$ if $T = \UU_n(q)$, and $\beta = 1$ otherwise, and fix $a \ge  (q^\beta-1,c)$. 
    
    Let $b_0, b_1, b_2\in \mathbb{R}_{>0}$ be such that $|t_0^{\Aut(T)}| \ge b_0$, $|z^{\Aut(T)}| \ge b_1$ for all $z \in C$ of prime order with $\Phi(|z|,q) > \beta$, and $|z^{\Aut(T)}| \ge b_2$ for all other $z \in C$ of prime order. 
    If, for some choice of  $a, b_0, b_1, b_2$,
    \begin{equation*}
    2c\cdot |\Out(T)|\cdot \left(\frac{c}{b_0} + \frac{c}{b_1} + \frac{a-1}{b_2}\right) < 1,
    \end{equation*}
    then $\mathcal{G}(G) = 3$.
\end{cor}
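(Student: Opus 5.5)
We apply Lemma~\ref{l:k=2_probability} to the element $y$ of Lemma~\ref{l:class_uni}. It suffices to show that $\QQ(T,y)$ is bounded above by the left-hand side of the displayed inequality. By Lemma~\ref{l:class_uni}(i),(ii), $C = C_{\Aut(T)}(y) = \la \wy\ra$ is cyclic of order $c$, and every element of $I_{\Aut(T)}(y)\setminus C$ inverts $y$. Since $|I_{\Aut(T)}(y)| = 2c$ (as observed in the proof of Lemma~\ref{l:class_uni}), the prefactor $|I_{\Aut(T)}(y)|\cdot|\Out(T)|$ in $\QQ(T,y)$ equals $2c|\Out(T)|$.

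It remains to bound $\sum_{z\in\mathcal{P}} |z^{\Aut(T)}\cap I_{\Aut(T)}(y)|/|z^{\Aut(T)}|$ by $c/b_0 + c/b_1 + (a-1)/b_2$. We split the prime order elements of $I_{\Aut(T)}(y)$ into three families.
\begin{enumerate}
\item[(1)] Elements outside $C$. An element of $I_{\Aut(T)}(y)\setminus C$ inverts $y$ and is therefore an involution, since $I/C$ has order $2$ and the coset consists of elements $\varphi$ with $\varphi^2\in C$ centralising and inverting $y$. More precisely, Lemma~\ref{l:class_uni}(ii) says that every prime order element here is $\Aut(T)$-conjugate to $t_0$, and that there are at most $c$ of them. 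These lie in the single class $t_0^{\Aut(T)}$, so they contribute at most $c/b_0$.
\item[(2)] Elements $z\in C$ of prime order $r$ with $\Phi(r,q) > \beta$. Grouping by $\Aut(T)$-class, each class $z^{\Aut(T)}$ meets $C$ in at most $|C|-1 < c$ elements in total, summed over all classes in this family. Hence the total contribution is at most $\sum |z^{\Aut(T)}\cap C|/b_1 \leqs c/b_1$.
\item[(3)] The remaining elements $z\in C$ of prime order, those with $\Phi(r,q) \leqs \beta$. Then $r \mid q^\beta - 1$, and since $r$ also divides $c = |C|$, each such $z$ lies in the unique subgroup of the cyclic group $C$ of order $(q^\beta-1,c)$. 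There are therefore at most $(q^\beta-1,c) - 1 \leqs a-1$ of them, contributing at most $(a-1)/b_2$.
\end{enumerate}

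A subtlety is that some $z\in C$ may lie in the same $\Aut(T)$-class as $t_0$. By Lemma~\ref{l:z_elts} this happens exactly when $T = \PSp_{2m}(q)$ with $q\equiv 3\pmod 4$. In that case the class of $t_0$ meets both $C$ and $I\setminus C$. We bound $|t_0^{\Aut(T)}\cap I_{\Aut(T)}(y)|/|t_0^{\Aut(T)}|$ by splitting the numerator into the part in $C$ and the part outside $C$. The part outside $C$ is counted in family (1), and the part in $C$ in family (2) or (3) according to $\Phi$, where $b_1$ or $b_2$ is at most $|z^{\Aut(T)}| = |t_0^{\Aut(T)}|$. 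Summing the separate bounds therefore remains valid. In general, the sum over representatives of $|z^{\Aut(T)}\cap I|/|z^{\Aut(T)}|$ is subadditive under splitting $I$ into pieces, because each denominator is at least the relevant $b_j$. This is the only point needing care.

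Combining, we obtain
\[
\QQ(T,y)\leqs 2c|\Out(T)|\left(\frac{c}{b_0}+\frac{c}{b_1}+\frac{a-1}{b_2}\right) < 1,
\]
and Lemma~\ref{l:k=2_probability} gives $\mathcal{G}(G) = 3$. The main step to verify is the justification in family (1) that elements of the nontrivial coset of prime order are exactly the involutions counted in Lemma~\ref{l:class_uni}(ii); that lemma supplies this directly.
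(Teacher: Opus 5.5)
Your proposal is correct and is essentially the paper's proof. You apply Lemma~\ref{l:k=2_probability} to $y$, use $|I_{\Aut(T)}(y)| = 2c$, and split the prime-order elements of $I_{\Aut(T)}(y)$ into three families contributing at most $c/b_0$, $c/b_1$ and $(a-1)/b_2$.

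The only real difference is bookkeeping. You split by membership in $C$, and since the sum is really $\sum 1/|w^{\Aut(T)}|$ over prime-order $w$, the case $T=\PSp_{2m}(q)$ with $q\equiv 3 \pmod 4$ is handled automatically. The paper instead splits by conjugacy to $t_0$, so it must treat that case separately via $(c+1)/|t_0^{\Aut(T)}| \leqs c/b_0 + (a-1)/b_2$.

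One small correction: your claim that every element of $I_{\Aut(T)}(y)\setminus C$ is an involution is not justified by the argument you give. Only the prime-order elements of that coset need to be involutions, and this holds because an element of odd prime order lies in the group generated by its square, which is contained in $C$.
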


\begin{proof}
Let $I = I_{\Aut(T)}(y)$. We seek to bound the expression
\begin{equation}\label{eq:prob_again}
\widetilde{Q}(T,y) = |I|\cdot|\mathrm{Out}(T)|\cdot \sum_{z\in\mathcal{P}}\frac{|z^{\mathrm{Aut}(T)}\cap I|}{|z^{\mathrm{Aut}(T)}|},
\end{equation}
from Lemma~\ref{l:k=2_probability}. To do so we will divide the elements $z$ of prime order in $I$ into three sets: those for which  $\Phi(|z|, q) > \beta$, the remaining elements of prime order in $I$ 
that are not conjugate to $t_0$, and the involutions conjugate to $t_0$. By Lemma~\ref{l:class_uni} the group $I$ has shape $c{:}2$.

Firstly, every prime order $z\in I$ with $\Phi(|z|,q) > \beta$ has odd order, and hence lies in $C$. Therefore there are at most $|C| = c$ prime order elements $z\in I$ with $\Phi(|z|,q) > \beta$. Secondly, every prime order element of $I \setminus C$  is an involution, and by Lemma~\ref{l:class_uni} is conjugate to $t_0$.  

If $T \neq \PSp_{2m}(q)$ with $q \equiv 3 \pmod 4$, then Lemma~\ref{l:z_elts} shows that no element of $C$ is $\Aut(T)$-conjugate to $t_0$, so there are at most $c$ elements in $I$ that are $\Aut(T)$-conjugate to $t_0$. Each  $z\in C$ of prime order such that $\Phi(|z|,q) \leqs \beta$ has order dividing $q^\beta -1$, and so lies in the unique subgroup of $C$ of order $(q^\beta-1,c) \leqs a$. Thus, there are at most $a-1$ prime order elements in $I$ for which $\Phi(|z|,q) \leqs \beta$ and $z \not\sim_{\Aut(T)} t_0$.
Substituting in the three cases of $z \in I \setminus C$, $z\in C$ with $\Phi(|z|,q) > \beta$, and $z\in C$ with $\Phi(|z|,q) \leqs \beta$, into \eqref{eq:prob_again} gives an upper bound on $\widetilde{Q}(T,y)$. The result then follows from Lemma~\ref{l:k=2_probability}.

If $T = \PSp_{2m}(q)$ with $q\equiv 3\pmod 4$, then $(q^\beta-1,c) = (q-1, q^m+1) = 2 \leqs a$. Hence there is a unique $z\in C$ of prime order with $\Phi(|z|,q) \leqs \beta$, and this $z$ is conjugate to $t_0$ by Lemma~\ref{l:z_elts}: hence $b_2 \le |t_0^{\Aut(T)}|$ in this case.
Thus there are at most $c+1$ elements in $I$ that are conjugate to $t_0$, so 
\begin{equation*}
\frac{|t_0^{\Aut(T)}\cap I|}{|t_0^{\Aut(T)}|} \leqs \frac{c+1}{|t_0^{\Aut(T)}|} \leqs \frac{c}{b_0} + \frac{a-1}{b_2}.
\end{equation*}
The result follows from \eqref{eq:prob_again}.
%
%
%
\end{proof}

\begin{lem}
	\label{l:luso}
	Suppose $T = \LL_n(q)$ or $\UU_n(q)$ with $n\geqs 3$, or $T = \PSp_{2m}(q)$ with $2m \ge 4$, or $T = \POmega^\e_{n}(q)$ with $n\ge 7$ and $\e \in \{\circ, -\}$. Then $\mathcal{G}(G) = 3$.
\end{lem}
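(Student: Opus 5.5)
The plan is to apply Corollary~\ref{cor:q_bound} family by family, after first disposing of the groups in $\mathcal{A}$ via Lemma~\ref{l:comp}. The low-dimensional coincidences $\PSp_4(2)'\cong \An_6$, $\POmega_5(q)\cong\PSp_4(q)$ and $\UU_4(q)\cong \POmega_6^-(q)$ are either covered by Lemma~\ref{l:k=2_greedy_alt_spo} or treated by whichever description appears in Table~\ref{tab:class_uni}. For each remaining $T$ I take $y$, $c$ and $t_0$ as in Lemma~\ref{l:class_uni}, and read off $b_0$ from Column~4 of Table~\ref{tab:class_uni}. I take $b_1$ from the rows of Table~\ref{tab:class_centr} with $i>\beta$, and $b_2$ from the rows with $i\leqs\beta$. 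For $a$ I use the trivial choice $a=(q^\beta-1,c)$, which is small: for example $(q-1,(q^n-1)/(q-1))\leqs (q-1,n)$ in the linear case, $(q^2-1,(q^n+1)/(q+1))\leqs (q+1,n)$ in the odd unitary case, and at most $2$ in the symplectic and orthogonal cases. I bound $|\Out(T)|$ crudely, e.g. by $2(n,q-\e)f$ for $\LL^\e_n(q)$, by $2f$ (or $(2,q-1)f$) in the symplectic and odd-dimensional orthogonal cases, and by $2(4,q^m+1)f$ for $\POmega^-_{2m}(q)$, where $q=p^f$; in all cases $f\leqs \log_2 q$.

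The key step is then to verify
\[
2c\,|\Out(T)|\left(\frac{c}{b_0}+\frac{c}{b_1}+\frac{a-1}{b_2}\right)<1.
\]
Since $c$ is roughly $q^{n-1}$ (or $q^m$ in the symplectic and orthogonal cases), the left side is roughly $c^2\log q$ divided by $\min(b_0,b_1,b_2)$. In the generic situation each $b_j$ is of order $q^{n^2/2}$ (respectively $q^{m^2+m}$), so for $n\geqs 5$ or $m\geqs 3$ the inequality holds with a large margin. Concretely, for $\LL_n(q)$ with $n\geqs 4$ the dominant term is about $n\log q \cdot q^{2n-2}\cdot q^{1-n^2/2}$, which is tiny. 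I will write these estimates family by family, using Lemma~\ref{l:order} only where Table~\ref{tab:class_centr} already packages the needed bounds.

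The main obstacle is the small-rank families, namely $\LL_3^\e(q)$, $\UU_4(q)$ and $\PSp_4(q)$, where $c^2$ and the $b_j$ have comparable degree in $q$. For $\LL_3(q)$, for instance, $c\approx q^2$ and $b_0\approx q^5$, so the main term is about $2\cdot 6f\cdot q^4/q^5$; the term involving $b_2=q^3(q^2-1)(q-1)/3$ is similar in size. The inequality therefore holds only once $q$ exceeds a modest threshold, and that threshold is exactly what $\mathcal{A}$ was designed to cover ($q\leqs 25$ and $q=64$ for $\LL_3$, $q\leqs 32$ for $\UU_3$, $q\leqs 5$ for $\UU_4$ and $\PSp_4$). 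Here I would use the exact values of $b_0,b_1,b_2$ from the tables rather than the packaged bounds. I would also take care with $|\Out(T)|$: the factor $(3,q-\e)$ and the field-automorphism factor $f$ matter, and the exceptional $q=64$ indicates that large $f$ is where the inequality is tightest. For each family I then check the inequality numerically for all $q$ above the bound defining $\mathcal{A}$; for example, for $\LL_3(q)$ I would verify it for $q\geqs 27$, $q\ne 64$, using $f\leqs \log_2 q$. Finally, for $\PSp_4(q)$ with $q$ even I must make sure $q\leqs 5$ leaves only $q\in\{2,4\}$, and handle $q=4$ directly from the exact data, or note that it falls under $\mathcal{A}$ or the $k=2$ computations.
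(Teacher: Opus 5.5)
Your proposal follows the paper's proof: remove $T\in\mathcal{A}$ by Lemma~\ref{l:comp}, take $c,b_0$ from Lemma~\ref{l:class_uni} and $b_1,b_2$ from Lemma~\ref{l:z_elts}, choose $a$ and an upper bound on $|\Out(T)|$, and apply Corollary~\ref{cor:q_bound}. Your $a=(q^\beta-1,c)$ is just a sharper choice than the paper's $a\in\{q-1,\,q+1,\,2,\,(2,q-1)\}$, and one small caution for the family-by-family check is that $b_2$ is only about $q^{2n-2}$ for even-dimensional unitary groups and about $q^{2m}$ for $\Omega_{2m+1}(q)$ (not $q^{n^2/2}$ or $q^{m^2+m}$), though the corresponding term remains harmless since there $a-1$ is at most $q$, respectively $1$.
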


\begin{proof}  
If $T \in \mathcal{A}$ then the result is immediate from Lemma~\ref{l:comp}, so we may assume $T\notin \mathcal{A}$ and let $y$, $c$ and $b_0 \leqs |t_0^{\Aut(T)}|$ be as in Lemma~\ref{l:class_uni}.  Let $\beta =2$ if $T$ is unitary and $\beta = 1$ otherwise. Let $b_1$ and $b_2$ be the lower bounds on $|z^{\Aut(T)}|$ in Lemma~\ref{l:z_elts} for $z \in C_{\Aut(T)}(y)$ of prime order, with $\Phi(|z|,q) > \beta$ and $\Phi(|z|,q) \le \beta$, respectively.

If $T = \LL_n(q)$  then $c =(q^n-1)/(q-1)$, so  we may set $a = q-1 \geqs (q-1, c)$. We define
\[ \omega = 
\begin{cases}
   |\Out(T)| & \mbox{if $n = 3$ and $q \leqs 73$}\\
   6\log q & \mbox{if $n = 3$ and $q > 73$}\\
   8\log q & \mbox{if $n = 4$}\\
    2(q-1)\log q & \mbox{if $n \geqs 5$.} 
\end{cases}
\]

If $T = \UU_n(q)$ then $c = q^{n-1} + 1$ if $n$ is even, whilst  $c = (q^n+1)/(q+1)$ if $n$ is odd, so we may let $a = q+1 \geqs (q^2-1, c)$. We define 
\[ \omega = 
\begin{cases}
   |\Out(T)| & \mbox{if $n = 3$ and $q \leqs 73$, or $n = 4$ and $q\in\{7,8\}$}\\
   6\log q & \mbox{if $n = 3$ and $q > 73$}\\
   8\log q & \mbox{if $n = 4$ and $q \geqs 9$}\\
   2(q+1)\log q & \mbox{if $n \geqs 5$.} 
\end{cases}
\]

If $T = \PSp_{2m}(q)$ then $c = q^m+1$, and we may let $a = 2 \ge (q-1, c)$. 
We define $\omega = 2\log q$.

If $T = \POmega^\e_n(q)$ then $c = q^m+1$, so we may set $a = (2, q-1) = (c, q-1)$. If $T = 
\Omega_{2m+1}(q)$ we let $\omega = 2\log q$, and if $T = \POmega_{2m}^-(q)$ we let $\omega = 8 \log q$.
    
Then in all cases  $|\Out(T)| \leqs \omega$. Our assumption that $T \not\in \mathcal{A}$ eliminates many small values of $n$ and $q$, so
the result now follows from Corollary~\ref{cor:q_bound}.
\end{proof}

As mentioned before the proof of Lemma~\ref{l:class_uni},
for the orthogonal groups of plus type we work from scratch.

\begin{lem}
	\label{l:o+}
	Suppose $T = \POmega_{2m}^+(q)$ with $m \ge 4$. Then $\mathcal{G}(G) = 3$.
\end{lem}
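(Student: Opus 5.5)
We will follow the strategy of Lemma~\ref{l:luso}: find a suitable $y\in T$ with small invertiliser, and show $\QQ(T,y)<1$ so that Lemma~\ref{l:k=2_probability} applies. Cases in $\mathcal{A}$ (namely $\POmega_8^+(2)$ and $\POmega_8^+(3)$) are handled by Lemma~\ref{l:comp}, so we assume $T\notin\mathcal{A}$. Because the natural module of $\POmega^+_{2m}(q)$ has no irreducible semisimple element of the Singer type used above, we take the standard substitute. Let $\widehat{y}$ act irreducibly on a non-degenerate minus-type $(2m-2)$-space $V_1$ with order $q^{m-1}+1$, and irreducibly on the complementary minus-type $2$-space $V_2$ with order $q+1$. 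Let $y$ be a generator of $\la\widehat{y}\ra\cap T$. Then $C_{\Aut(T)}(y)$ contains the abelian group $C_{q+1}\times C_{(q^{m-1}+1)/(2,q-1)}$, up to scalars and a factor of at most $4$. This is the correction to \cite{LL_chiral} mentioned above.

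First, we will compute $C:=C_{\Aut(T)}(y)$ directly. Since the eigenvalues of $\widehat{y}$ on $V_1$ and $V_2$ are distinct for $m\geqs 4$, any centralising similarity preserves $V_1$ and $V_2$, so it lies in $\mathrm{GO}^-_{2m-2}(q)\times \mathrm{GO}^-_2(q)$ and centralises a Singer-type element in each factor. Field automorphisms give nothing new. Triality does not arise, since $y$ has two distinct eigenspace dimensions; when $m=4$ we still need to check that no triality graph automorphism centralises $y$. This will give $|C|=c\leqs 2(q+1)(q^{m-1}+1)$. Next, $I_{\Aut(T)}(y)=C.2$, where the inverting elements are involutions. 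Each of these acts as a reflection-type inversion on both $V_1$ and $V_2$, so it is $\Aut(T)$-conjugate to an involution $t_0$ of type $t_{m/2}$, $t_{m/2}'$, $t_m$ or $t_{m-1}$. We then bound $|t_0^{\Aut(T)}|\geqs b_0\approx \frac18 q^{m^2}$ using Lemma~\ref{l:order}(iii), in the same way as for $\POmega^-_{2m}(q)$ in Lemma~\ref{l:class_uni}.

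Second, we classify the prime order elements $z\in C$, following Lemma~\ref{l:z_elts}. If $\Phi(r,q)=i>2$, then $r\mid q^{m-1}+1$, and $z$ has the form $[\Lambda^{(2m-2)/i},I_2]$. Its centraliser is then at most $|\GU_{m-1}(q)|\cdot|\mathrm{SO}_2^-(q)|$, so $|z^{\Aut(T)}|\gtrsim q^{m^2-m}/(q+1)^2$. If $i\leqs 2$, then $z$ lies in the subgroup of order at most $a=(q^2-1,c)\leqs 2(q+1)$. The worst case is an element acting nontrivially only on $V_2$, with centraliser $\mathrm{SO}_{2m-2}^-(q)\times \mathrm{SO}_2^-(q)$, which gives $b_2\gtrsim \frac12 q^{2m-2}$. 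This smallish class size is the main obstacle. Because such $z$ number only about $q+1$, we will either use the crude count $(a-1)/b_2$, or, better, split off these at most $q$ elements of order dividing $q+1$ acting only on $V_2$. All other $z$ with $i\leqs 2$ have much larger classes.

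Finally, we substitute into the estimate
\[
\QQ(T,y)\leqs 2c\,|\Out(T)|\left(\frac{c}{b_0}+\frac{c}{b_1}+\frac{a-1}{b_2}\right),
\]
proved exactly as in Corollary~\ref{cor:q_bound}, using $|\Out(T)|\leqs 24\log q$ for $m=4$ and $8\log q$ otherwise. The dominant term is
\[
2c\cdot|\Out(T)|\cdot\frac{a}{b_2}\approx \frac{q^{m+1}\cdot q\log q}{q^{2m-2}},
\]
which is less than $1$ once $m\geqs 5$, or once $q$ is moderately large for $m=4$. This term is delicate for $m=4$ and small $q$, where we expect to need the refined count above or exact centraliser orders. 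Any remaining small cases, for example $\POmega_8^+(q)$ with $q\leqs 5$ or $\POmega_{10}^+(2)$, will be verified computationally as in Lemma~\ref{l:comp}. Lemma~\ref{l:k=2_probability} then gives $\mathcal{G}(G)=3$.
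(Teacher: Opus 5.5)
\paragraph{A different torus.} Your route differs from the paper's. The paper avoids the non-cyclic torus $C_{q+1}\times C_{(q^{m-1}+1)/(2,q-1)}$ altogether. It takes $y$ in a cyclic torus coming from a Singer cycle $A$ of $\GL_m(q)$, acting as $A$ on a totally singular $m$-space $E$ and as $A^{-T}$ on a complementary totally singular $F$. It shows $|C_{\Aut(T)}(y)|=q^m-1$ and $I_{\Aut(T)}(y)=\la C_{\Aut(T)}(y),t_0\ra$. It then proves one uniform bound $|z^{\Aut(T)}|\geqs b=q^{2m-2}(q^m-1)(q^{m-1}-1)/(2(q+1))$ for every prime-order $z\in I_{\Aut(T)}(y)$, so that $\QQ(T,y)<\omega a^2/b<1$ with no case split. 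Your torus could also be made to work, but as written the argument has a gap and does not close.

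\paragraph{The class-size estimates.} The main problem is quantitative.
\begin{itemize}
\item Take $z\in C$ of prime order acting nontrivially only on $V_2$. Its centraliser in $\mathrm{SO}^+_{2m}(q)$ is $\mathrm{SO}^-_{2m-2}(q)\times\mathrm{SO}^-_2(q)$, of index $q^{2m-2}(q^m-1)(q^{m-1}-1)/(q+1)$. This is of order $q^{4m-4}$, and is exactly $2b$. Your $b_2\approx\frac12q^{2m-2}$ drops the factor $(q^m-1)(q^{m-1}-1)$.
\item Likewise, for $i>2$ your own centraliser bound $|\GU_{m-1}(q)|\cdot|\mathrm{SO}^-_2(q)|$ gives $|z^{\Aut(T)}|\gtrsim\frac12q^{m^2+m}/(q+1)^2$, not $q^{m^2-m}/(q+1)^2$.
\end{itemize}
With the values you state, and the suppressed constants restored, the inequality fails for many groups outside $\mathcal{A}$. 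For example, with $c=(q+1)(q^{m-1}+1)$, $a=(q^2-1,c)$ and the exact $|\Out(T)|$, the single term $2c\,|\Out(T)|\,(a-1)/b_2$ already exceeds $1$ for $\POmega^+_{10}(q)$ with $2\leqs q\leqs 5$. For $\POmega^+_8(q)$ it still exceeds $1$ at $q=128$. Your displayed approximation of the dominant term is also inconsistent with your own inputs.

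So the claim that only a few small cases remain, to be settled by a computation as in Lemma~\ref{l:comp}, is unfounded, and those groups are far out of computational reach. With the correct indices the $V_2$-supported elements contribute only $O(q^{5-3m}\log q)$. The remaining elements with $\Phi(|z|,q)\leqs 2$ (for instance those with centraliser of type $\GU_m(q)$) have classes of size at least of order $q^{m^2-m}\geqs q^{4m-4}$ and number $O(q)$. The three-term estimate of Corollary~\ref{cor:q_bound} then closes easily for all $T\notin\mathcal{A}$.

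\paragraph{Graph automorphisms when $m=4$.} You leave open whether graph automorphisms enlarge $C_{\Aut(T)}(y)$. Your heuristic about distinct eigenspace dimensions does not settle it. Your torus corresponds to the $W(\mathrm{D}_4)$-class of signed cycle type $[\bar3\bar1]$. This is the only class with characteristic polynomial $(x^3+1)(x+1)$ on the reflection representation, so it is fixed by triality, and triality-type automorphisms do normalise your torus. Whether one of them centralises $y$ needs an actual argument. The paper does this for its own torus using \cite[Prop.~1.4.1]{Kleidman} and \cite[Table~4.5.1]{GLS_CFSG3}. Alternatively, you could allow $|C_{\Aut(T)}(y)|$ to be up to three times larger and bound the classes of the extra outer elements of prime order.
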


\begin{proof}
	Let $V$ be the natural module for $\mathrm{O}^+_{2m}(q)$, and let $A \in \GL_m(q)$ be of order $q^m-1$. Then as noted in \cite[Lemma 1.2.2]{Kleidman} the group $\mathrm{O}^+_{2m}(q)$ contains an element $\widetilde{y}$ which acts as $A$ on a totally singular $m$-space $E \le V$, and as $A^{-T}$ (inverse-transpose) on a complementary totally singular $m$-space $F \le V$. 
    Furthermore, if $q$ is even then $\widetilde{y} \in \Omega_{2m}^+(q)$, whilst if $q$ is odd then $\widetilde{y} \in \mathrm{SO}^+_{2m}(q) \setminus \Omega^+_{2m}(q)$.
    Let $\widehat{y}$ be the image of $\widetilde{y}$ in $\mathrm{PSO}_{2m}^+(q)$, and note that $\la \widetilde{y} \ra \cap Z(\mathrm{SO}_{2m}^+(q)) = \la -I\ra$, 
    so $|\widehat{y}| = 
    (q^m-1)/(2,q-1)$. 
    
    Let $y\in T$ be a generator of $\la \widehat{y} \ra \cap T$, so that $y = \widehat{y}^2$ if $q^m \equiv 1 \pmod 4$ and $y = \widehat{y}$ otherwise. 
    We now determine $|C_{\Aut(T)}(y)|$. 
    For each primitive element $\lambda$ of $\bbF_{q^m}$, let $\mathcal{E}_{\lambda} = \{\lambda, \lambda^{q}, \lambda^{q^{2}}, \ldots, \lambda^{q^{m-1}}\}$. Then over $\bbF_{q^m}$, for some choice of $\lambda$ the element $\widetilde{y}$ has eigenvalues $\mathcal{E}_{\lambda} \cup \mathcal{E}_{\lambda}^{-1}$. The only scalar multiples of $\widetilde{y}$ lying in $\mathrm{O}^+_{2m}(q)$ are $\widetilde{y}$ and (only if $q$ is odd) $-\widetilde{y} = \widetilde{y}^{(q^m-1)/2+1}$ 
    and a short calculation shows that
    \begin{equation*}
    \{x: x \in \mathcal{E}\} \cap \{\pm x : x \in \mathcal{E}^{-1}\} = \emptyset = \{x^2 : x \in \mathcal{E}\} \cap \{\pm x^{2} : x \in \mathcal{E}^{-1}\}.
    \end{equation*}
    Hence (see also the proof of \cite[Lemma 4.1.9]{KL_classical}) 
    $C_{\mathrm{O}^+_{2m}(q)}(\widetilde{y}) = C_{\GL_{2m}(q)}(\widetilde{y}) \cap \mathrm{O}^+_{2m}(q) = \la \widetilde{y} \ra$, and 
    \[ C_{\mathrm{PO}^+_{2m}(q)}(\wy) = C_{\mathrm{O}^+_{2m}(q)} (\widetilde{y})/\langle -I \rangle = \la \wy \ra \leqs \mathrm{PSO}^+_{2m}(q).\] 
    There exists an element of $\mathrm{GO}^+_{2m}(q) \setminus \mathrm{O}^+_{2m}(q)$ which acts as a primitive scalar on $E$ and trivially on $F$ (see \cite[p. 35]{BHRD}), and hence centralises $\widetilde{y}$. Hence the centraliser in $\mathrm{PGO}_{2m}^+(q)$ of $y$ 
    has order $(2, q-1)|\wy| = q^m-1$.

    It is clear that $C_{\mathrm{P\Gamma O}_{2m}^+(q)}(y) \leqs \mathrm{PGO}^+_{2m}(q)$, so to determine $|C_{\Aut(T)}(y)|$, all that remains is to consider the additional graph automorphisms when $m = 4$. By \cite[Prop 1.4.1]{Kleidman} no triality automorphism $\tau$ centralises 
    an element of order $(q^4-1)/(2, q-1)^2$, and consulting \cite[Table 4.5.1]{GLS_CFSG3} we see that the centralisers in $T$ of conjugates $\gamma_i^\ast$ and $\gamma_i^{\ast \ast}$ (for $i = 1, 2$) of the two representatives of $\gamma$ under $\tau$ stabilise orthogonal decompositions of $V$ into two odd-dimensional subspaces, and hence do not contain $y$. 
    Thus for all $m$ and $q$ we conclude that \[|C_{\Aut(T)}(y)| = |C_{\mathrm{PGO}_{2m}^+(q)}(y)| = q^m-1.\]
    
    Next, we observe that any involution $t_0$ inverting $y$ interchanges $E$ and $F$, and the obvious such $t_0$ is a product of $m$ reflections, and so lies in $\mathrm{PO}_{2m}^+(q)$. Hence $I_{\Aut(T)}(y)  = \langle C_{\Aut(T)}(y), t_0 \rangle \leqs \mathrm{PGO}_{2m}^+(q)$.  
    
	To complete the proof, we use Lemma~\ref{l:k=2_probability} to show that if $T \not\in \mathcal{A}$ then
    $\widetilde{Q}(T,y) < 1$, and hence $\mathcal{G}(G) = 3$. 
	Let $z\in I_{\Aut(T)}(y) \leqs \mathrm{PGO}_{2m}^+(q)$ be of prime order $r$. We will show that
	\begin{equation}\label{e:o+_b}
	|z^{\Aut(T)}| \geqs \frac{q^{2m-2}(q^m-1)(q^{m-1}-1)}{2(q+1)} =: b.
	\end{equation}
	
    To prove \eqref{e:o+_b}, first assume $r = 2$. It is shown in \cite[Proposition 2.7(vi)]{BH_Sylow} that if $q$ is odd then every involution in $\Aut(T)$, other than those of type $\gamma_1$, satisfies \eqref{e:o+_b}: involutions of type $\gamma_1$ centralise an $(n-1)$-space. If $q$ is odd then each choice of $t_0$ has a $(-1)$-eigenspace of dimension $m$, and hence $I_{\Aut(T)}(y)$ contains no involution of type $\gamma_1$, and \eqref{e:o+_b} follows.
    If $q$ is even then every involution $z$ in $I_{\Aut(T)}(y)$ lies in $I_{\Aut(T)}(y) \setminus C_{\Aut(T)}(y)$, and so is a possible $t_0$. By \cite[Proposition 3.5.16]{BG_classical} with $s = m = n/2$, 
    \[|C_{\Omega^+_{2m}(q)}(z)| \in \{q^{m(m-1)/2}|\Sp_m(q)|, q^{m(m-1)/2}|\Sp_{m-1}(q)|, q^{m(m+1)/2 - 1}|\Sp_{m-2}(q)|\}.\] 
    Hence $|C_{\mathrm{PO}^+_{2m}(q)}(t_0)| \leqs q^{m(m-1)/2 + m/2(m/2+1)} = q^{3m^2/4}$, and we conclude that \[|z^{\Aut(T)}| \geqs |z^T| \geqs q^{(5/4)m^2 - m} \geqs b.\]

     Assume next that $r$ is odd, and let $i = \Phi(r,q)$. In this case $z\in C_{\mathrm{PSO}_{2m}^+(q)}(y) = \la \widehat{y}\ra$, so $z$ stabilises the totally singular decomposition $E\oplus F$. It follows that $z$ is of type $[\Lambda^{2m/i}]$ if $i$ is even, and of type $[(\Lambda,\Lambda^{-1})^{m/i}]$ if $i$ is odd. By \cite[Table B.12]{BG_classical} 
	\begin{equation*}
	|C_{\mathrm{Inndiag}(T)}(z)| \in \{ |\GU_{2m/i}(q^{i/2})|, |\GL_{m/i}(q^i)|\},
	\end{equation*}
	so $|C_{\mathrm{Inndiag}(T)}(z)| \leqs |\GU_m(q)| \leqs (q+1)q^{m^2-1}$ by Lemma~\ref{l:order}(i), and so
	\begin{equation*}
	|z^{\Aut(T)}| \geqs |\mathrm{Inndiag}(T):C_{\mathrm{Inndiag}(T)}(z)| \geqs \frac{\frac{1}{2}q^{m(2m-1)}}{(q+1)q^{m^2-1}} > b.
	\end{equation*}
	This shows that \eqref{e:o+_b} holds in all cases.

    Next,  let $a = 2(q^m-1)$, so that 
	$|I_{\Aut(T)}(y)| \leqs a$.
	Finally, if $(m,q)\in\{(4,4),(5,2),(6,2)\}$ then let $\omega = |\Out(T)|$, otherwise let $\omega = 24\log q \geqs |\Out(T)|$. With the notation of Lemma~\ref{l:k=2_probability} we now see that 
	\begin{equation*}
	\begin{aligned}
	\widetilde{Q}(T,y) &< |I_{\Aut(T)}(y)| \cdot |\Out(T)| \cdot ab^{-1} 
	\leqs \omega \cdot a^2b^{-1} \\
	& = \frac{\omega \cdot 4(q^{m}-1)^2\cdot 2(q+1)}{q^{2m-2}(q^m-1)(q^{m-1}-1)} < 1
	\end{aligned}
	\end{equation*}
	in all cases with $(m, q) \not \in \{(4, 2), (4, 3)\}$, i.e. $T\notin\mathcal{A}$. Hence the result follows from Lemmas~\ref{l:comp} and \ref{l:k=2_probability}.
\end{proof}

{\small
	\centering
    \begin{table}
		\caption{Pairs $(T,y)$ with $T\notin \mathcal{A}$ an exceptional group}
		\begin{tabular}{@{}lll@{}}
			\toprule
			$T$ & $|y|$  & Condition \\ \midrule
			${^2}\mathrm{B}_2(q)$ & $q+\sqrt{2q}+1$ &\\
			${^2}\mathrm{G}_2(q)$ & $q+\sqrt{3q}+1$ &\\
			${^2}\mathrm{F}_4(q)'$ & $q^2+\sqrt{2q^3}+q+\sqrt{2q}+1$ & \\
			$\mathrm{G}_2(q)$ & $q^2-q+1$ & \\
			${^3}\mathrm{D}_4(q)$ & $q^4-q^2+1$ &\\
			$\mathrm{F}_4(q)$ & $q^4-q^2+1$ &$q\geqslant 3$\\
			& $17$ &$q=2$\\
			$\mathrm{E}_6^\e(q)$ & $(q^6+\e q^3+1)/(3,q-\e)$ &\\
			$\mathrm{E}_7(q)$ & $(q+1)(q^6-q^3+1)/(2,q-1)$ &$q\geqslant 3$\\
			& $129$ &$q=2$ \\
			$\mathrm{E}_8(q)$ & $q^8+q^7-q^5-q^4-q^3+q+1$ & \\
			\bottomrule
		\end{tabular}
		\label{tab:excep_uni}
	\end{table}
}

\begin{lem}
    \label{l:k=2_greedy_excep}
    Suppose $T$ is an exceptional group of Lie type that is not isomorphic to $\LL_2(q)$ for any $q$. Then $\mathcal{G}(G) = 3$.
\end{lem}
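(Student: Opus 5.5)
We follow the template used for classical groups: apply Lemma~\ref{l:k=2_probability} with $y$ chosen from Table~\ref{tab:excep_uni}. The groups ${}^2\mathrm{B}_2(8)$, ${}^2\mathrm{B}_2(32)$, ${}^2\mathrm{F}_4(2)'$, $\mathrm{G}_2(3)$ and ${}^3\mathrm{D}_4(2)$ lie in $\mathcal{A}$ and are handled by Lemma~\ref{l:comp}. The group ${}^2\mathrm{G}_2(3)'\cong \LL_2(8)$ is excluded by hypothesis, and $\mathrm{G}_2(2)'\cong \UU_3(3)$ is also covered by Lemma~\ref{l:comp}. By Lemma~\ref{l:k=2_greedy_alt_spo} we may assume $P = \Sn_2$. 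For each remaining $T$ we take $y$ to be a regular semisimple element generating (the image in $T$ of) a maximal torus of the order listed in Table~\ref{tab:excep_uni}. These are the cyclic tori familiar from the literature on regular orbits and chirality, such as \cite{LL_chiral}; for $\mathrm{F}_4(2)$ and $\mathrm{E}_7(2)$ we use the elements of prime order $17$ and $129$ instead.

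The first step is to determine $C_{\Aut(T)}(y)$ and $I_{\Aut(T)}(y)$. Since $y$ is regular semisimple generating a cyclic maximal torus $S$, we have $C_{\mathrm{Inndiag}(T)}(y)=S$ (or $S$ extended by the diagonal part for $\mathrm{E}_6^\e$ and $\mathrm{E}_7$). Field and graph automorphisms contribute at most a factor of $|\Out(T)|$, and in most cases none, since $|y|$ is divisible by a primitive prime divisor that no proper subfield subgroup sees. The normaliser quotient $N_T(S)/S$ is cyclic of small order: $4$, $6$, $12$, $6$, $4$, $12$, $9$, $18$ and $30$ respectively. Hence $|I_{\Aut(T)}(y)|\leqs 2|C_{\Aut(T)}(y)|\leqs 2|S|\cdot|\Out(T)|$, which is of order roughly $q^{\mathrm{rank}}$ up to a logarithmic factor in $q$.

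Next we bound the sum in $\QQ(T,y)$ crudely: $\sum_{z}|z^{\Aut(T)}\cap I|/|z^{\Aut(T)}|\leqs |I|/m(T)$, where $m(T)$ is the minimal size of an $\Aut(T)$-class of prime-order elements. Uniform lower bounds for $m(T)$ are available from standard tables (the smallest classes are long-root elements, of size roughly $q^{2h^\vee-2}$, for example $q^{22}$ in $\mathrm{E}_6$, $q^{34}$ in $\mathrm{E}_7$, $q^{58}$ in $\mathrm{E}_8$, $q^{6}$ in $\mathrm{G}_2$ and $q^{16}$ in $\mathrm{F}_4$). This gives $\QQ(T,y)\leqs |I|^2|\Out(T)|/m(T)$. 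In general this is about $q^{2r}(\log q)^3/q^{2h^\vee-2}$, which is less than $1$ comfortably for large rank and for all but very small $q$ in rank $2$.

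The main obstacle is the small-rank families ${}^2\mathrm{B}_2$, ${}^2\mathrm{G}_2$ and $\mathrm{G}_2$, where $|I|^2\approx q^2$ and $q^4$ respectively is close to $m(T)$, which is about $q^{5/2}$ for Suzuki groups. For these we refine the sum. The odd-order elements of $I$ lie in $S$, and their classes are regular semisimple of size at least $|T|/|S|$. The involutions of $I\setminus C$ form a single class or very few classes, whose sizes we read off from known centralisers (for example $q^2(q^2+1)(q-1)$ in ${}^2\mathrm{B}_2(q)$). This yields $\QQ\lesssim |I||\Out(T)|\bigl(|S|^2/|T|+|S|/|t_0^{\Aut(T)}|\bigr)$, which is less than $1$ once $q$ exceeds the excluded values in $\mathcal{A}$. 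Any remaining small cases are checked directly as in Lemma~\ref{l:comp}. Lemma~\ref{l:k=2_probability} then gives $\mathcal{G}(G)=3$.
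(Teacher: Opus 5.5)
Your route is the paper's: apply Lemma~\ref{l:k=2_probability} to a generator $y$ of the cyclic maximal torus in Table~\ref{tab:excep_uni}, and bound $\QQ(T,y)\le|\Out(T)|\,|I_{\Aut(T)}(y)|^2/\min_{z\in\mathcal{P}}|z^{\Aut(T)}|$ using minimal prime-order class sizes.

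\textbf{The gap: the centraliser step.} The paper takes from Leemans--Liebeck that $|C_{\Aut(T)}(y)|=d|y|$ with $d=|\mathrm{Inndiag}(T):T|$. So no field or graph automorphism centralises $y$, and $|I_{\Aut(T)}(y)|\le 2d|y|$. You only assert ``at most a factor $|\Out(T)|$, in most cases none'', and this causes trouble in both directions.
\begin{itemize}
\item Your small-rank refinement assumes that the odd-order elements of $I$ lie in $S$. That is exactly the sharp statement you have not proved.
\item With your bound $|I|\le 2|S|\,|\Out(T)|$, the crude estimate fails in families you never flag. For ${}^3\mathrm{D}_4(q)$ the ratio is about $4|\Out(T)|^3/q^2$, which exceeds $1$ for $q=2^f$ with $2\le f\le 7$.
\item Conversely, once the sharp centraliser is known, ${}^2\mathrm{B}_2$, ${}^2\mathrm{G}_2$ and $\mathrm{G}_2$ are not obstacles at all. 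For example, for ${}^2\mathrm{B}_2(128)$ we have $|\Out(T)|\,|I|^2\le 7\cdot 290^2=588700$. The smallest prime-order class (the involutions) has size $(q^2+1)(q-1)=2080895$.
\end{itemize}

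\textbf{Slips.}
\begin{itemize}
\item $129=3\cdot 43$ is not prime, though you only need an element of order $129$.
\item The involution class of ${}^2\mathrm{B}_2(q)$ has size $(q^2+1)(q-1)$, not $q^2(q^2+1)(q-1)=|T|$. So $m(T)\approx q^3$, not $q^{5/2}$. With your loose $|I|$, the refined estimate at $q=128$ comes out at about $0.99$, which is hardly comfortable.
\end{itemize}

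\textbf{Where refinement is genuinely needed.} The cases that really need more than the crude bound are ${}^3\mathrm{D}_4(3)$ and ${}^3\mathrm{D}_4(4)$. Here $|y|=73$ and $241$ respectively, and $y$ is inverted in $N_T(\langle y\rangle)$. Even with the exact value $|I_{\Aut(T)}(y)|=2|y|$, the long root classes have sizes $(q^8+q^4+1)(q^2-1)=53144$ and $986895$. These are below $|\Out(T)|\,|I|^2=63948$ and $1393944$. So no lower bound on $\min_{z\in\mathcal{P}}|z^{\Aut(T)}|$ can settle these two groups. The paper's one-line appeal to such bounds does not cover them either.

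Your refinement is exactly the right fix. Only classes meeting $I$ contribute: the elements of $\langle y\rangle$, which are regular semisimple with classes of size at least $|T|/|y|$, and the $|y|$ involutions inverting $y$. Bound the involution classes merely by the long root class size $m(T)$. Then $|I|\,|\Out(T)|\bigl(|y|^2/|T|+|y|/m(T)\bigr)$ is about $0.60$ and $0.71$ respectively.

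Rather than saying ``remaining small cases are checked directly'', you should name these cases and argue this way.
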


\begin{proof}
    Let $y\in T$ be a generator of a cyclic maximal torus of $T$ of order given in Table~\ref{tab:excep_uni}, which comes from \cite[Table~2]{LL_chiral}.
    We seek to apply Lemma~\ref{l:k=2_probability}. Note that
    \begin{equation*}
        \widetilde{Q}(T,y) \leqslant \frac{|\mathrm{Out}(T)|\cdot |I_{\mathrm{Aut}(T)}(y)|^2}{\min\{|z^{\mathrm{Aut}(T)}|: z\in \mathcal{P}\}},
    \end{equation*}
    so we only need to show that if $z \in \Aut(T)$ has prime order then
    \begin{equation}
    \label{e:prime_excep}
        |z^{\Aut(T)}| > |\mathrm{Out}(T)|\cdot |I_{\mathrm{Aut}(T)}(y)|^2.
    \end{equation}

    Let $d = |\mathrm{Inndiag}(T):T|$. Then, as discussed in \cite[Section 3]{LL_chiral}, the group $C_{\Aut(T)}(y)$ 
    has order $d|y|$, so $|I_{\Aut(T)}(y)|\leqs 2d|y|$.  By Lemma~\ref{l:comp} we may assume that $T \not\in \mathcal{A}$, so in particular $T \neq \mathrm{G}_2(2)' \cong \UU_3(3)$, 
    and $T \neq  {^2}\mathrm{G}_2(3)' \cong \LL_2(8)$ by our assumption that $T$ is not isomorphic to a two-dimensional linear group. Hence the bounds for $\min\{|z^{\mathrm{Aut}(T)}|:z\in \mathcal{P}\}$ given in \cite[Proposition 2.11]{BT_extremely} show that \eqref{e:prime_excep} holds for $y$.    
    
    For example, if $T = \mathrm{E}_7(q)$ with $q\geqs 3$, then $d = (2,q-1)$ and \cite[Proposition 2.11]{BT_extremely} shows that if $z \in \mathcal{P}$ then $|z^{\Aut(T)}| > q^{34}$. This gives 
    \begin{equation*}
        |z^{\Aut(T)}| > q^{34} > 4(q+1)^2(q^6-q^3+1)\log q \geqs |\Out(T)|\cdot |I_{\Aut(T)}(y)|^2
    \end{equation*}
    for any $q\geqs 3$ and any $z\in \Aut(T)$ of prime order, and so \eqref{e:prime_excep} holds. 
\end{proof}

\begin{proof}[Proof of Theorem~\ref{thm:greedy}(i)]
    The result follows from Lemma~\ref{l:k=2_greedy_alt_spo} if $T$ is alternating or sporadic, from Lemma~\ref{l:k=2_L2} if $T$ is isomorphic to $\PSL_2(q)$ for some $q$, from Lemmas~\ref{l:luso} and \ref{l:o+} if $T$ is classical in dimension at least three, and  Lemma~\ref{l:k=2_greedy_excep} if $T$ is exceptional. In particular,  $b(G) = \mathcal{G}(G)$ for each $T$ (see Theorem \ref{t:H_diag}(i)), and hence every greedy base for $G$ has the same size.
\end{proof}

\section{Greedy bases for the groups with $k \geqs 3$}\label{sec:k_ge_3}

In this section we consider the diagonal type groups $G$ with $k \geqs 3$, 
and hence complete the proof of Theorem~\ref{thm:greedy}.

\subsection{Some important partitions}\label{subsec:partitions}

We now introduce several important partitions of $[k]$ and analyse their stabilisers in $\Sn_k$ and $\An_k$. Recall that in Theorem~\ref{thm:greedy} we define $\ell = \lceil \log_{|T|} k \rceil$. We define $m:= \lceil k/|T| \rceil$, so that if $\ell \ge 2$ then $k \in \{(m-1)|T| + 1, \ldots, m|T|\}$, noting that $m \in \{|T|^{\ell-2} + 1, \ldots, |T|^{\ell-1}\}$. 

We say a partition $\Pi$ is of \textit{type} $[a_1^{b_1},\dots,a_s^{b_s}]$, written as 
\begin{equation*}
\Pi\sim [a_1^{b_1},\dots,a_s^{b_s}],
\end{equation*}
if  $0 \le a_1 < a_2 < \cdots < a_s$, for each $i$ there are precisely $b_i$ parts in $\Pi$ of size $a_i$, 
and these are all of the parts of $\Pi$. We also write $\Pi_1\sim\Pi_2$ if $\Pi_1$ and $\Pi_2$ are of same type.

Let 
$\minPart$
be a partition of $[k]$ into $|T|$ parts so that all parts have size $m$ or $m-1$,
noting that  $\minPart$ is well-defined up to $\sim$. 
Secondly,  for $k \geqs |T|+1$ we let $\Sigma$ be a partition of $[k]$ such that
\begin{equation}\label{eq:sigma}
\Sigma\sim
\begin{cases}
[(m-2)^1,(m-1)^{|T|-3},m^2] & \mbox{if $k = (m-1)|T|+1$}\\
[(m-2)^1,(m-1)^{|T|-4},m^3] & \mbox{if $k = (m-1)|T|+2$}\\
\minPart & \mbox{if $(m-1)|T|+3\leqs k\leqs m|T|-3$}\\
[(m-1)^3,m^{|T|-4},(m+1)^1] & \mbox{if $k = m|T|-2$}\\
[(m-1)^2,m^{|T|-3},(m+1)^1] & \mbox{if $k = m|T|-1$}\\
[(m-1)^2,m^{|T|-4},(m+1)^2] & \mbox{if $k = m|T|$.}\\
\end{cases}
\end{equation}

For a subgroup $H$ of $\Sn_k$ and partition $\Pi = \{\pi_t : t \in T\}$ of $[k]$, where some of the parts $\pi_t$ are allowed to be empty, we write $H_{\{\pi_t\}}$ for the setwise stabiliser of the subset $\pi_t$ of $[k]$, 
write
\begin{equation*}
H_{(\Pi)} = \bigcap_{t \in T} H_{\{\pi_t\}}
\end{equation*}
for the setwise stabiliser of each part of $\Pi$, and write $H_{\{\Pi\}}$ for the subgroup of $H$ that permutes the parts of $\Pi$.

We show next that if $k \ge |T| + 1$ then $\minPart$ is the unique type of partition with smallest stabiliser in $H \in \{\An_k, \Sn_k\}$.

\begin{lem}\label{l:min_part}
Suppose $k \ge |T| + 1$, and let $\Pi$ be a partition of $[k]$ into exactly $|T|$ 
parts (some of which may be empty) such that $\Pi \not\sim \minPart$. Let the smallest and largest parts of $\Pi$ have size $d$ and $e$, respectively. Let $H$ be $\An_k$ or $\Sn_k$ acting naturally on $[k]$. Then \[|H_{(\Pi)}| \geqs \frac{e}{d+1}|H_{(\minPart)}| >|H_{(\minPart)}|,\] with equality in the first bound 
if and only if $\Pi$ has either a unique part not of size $m$ or $m-1$, and this part has size $m-2$ or $m+1$, or exactly two such parts, one of size $m-2$ and one of size $m+1$. 
\end{lem}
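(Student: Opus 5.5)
Since $\Pi$ has parts $\pi_t$, we have $H_{(\Pi)} = H\cap \prod_t \Sn_{\pi_t}$. Because $k \geqs |T|+1$, the balanced partition $\minPart$ has all parts of size $m-1 \geqs 1$ or $m\geqs 2$, so it has a part of size at least $2$. Hence $\prod_t \Sn_{\pi_t}$ contains a transposition when $\Pi=\minPart$, and also when $\Pi\not\sim\minPart$ (then $e\geqs m\geqs 2$). In either case $|H_{(\Pi)}| = \prod_t |\pi_t|!/|\Sn_k:H|$ with the same index for all such $\Pi$. So it suffices to prove the statement for $H = \Sn_k$, i.e.\ to compare the products $N(\Pi):=\prod_{t}|\pi_t|!$.

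The key step is a balancing move. Let $\pi$ be a largest part (size $e$) and $\pi'$ a smallest part (size $d$). Since $\Pi\not\sim\minPart$, we have $e-d\geqs 2$. Move one point from $\pi$ to $\pi'$ to get $\Pi'$. Then
\[ N(\Pi) = N(\Pi')\cdot \frac{e!\,d!}{(e-1)!\,(d+1)!} = \frac{e}{d+1}N(\Pi'),\]
and $e/(d+1)>1$. Iterate: each move strictly decreases $\sum_t |\pi_t|^2$, so the process terminates, necessarily at a partition of type $\minPart$ (parts differing by at most $1$, with $|T|$ parts summing to $k$). Every later move has ratio $e_j/(d_j+1)\geqs 1$, because until we reach $\minPart$ we always have $e_j-d_j\geqs 2$. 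Hence $N(\Pi)\geqs \frac{e}{d+1}N(\minPart)$, which gives both inequalities.

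For the equality case, equality holds iff $\Pi'$ itself is of type $\minPart$, that is, a single move suffices. I would characterise this directly. Suppose $\Pi'\sim\minPart$. Then $\Pi$ is obtained from a balanced partition by adding one point to a part of size $m-1$ or $m$ and removing one point from a part of size $m$ or $m-1$. The constraint $e-d\geqs 2$ leaves exactly the listed possibilities:
\begin{itemize}
\item one part of size $m+1$ with all other parts of size $m$ or $m-1$;
\item one part of size $m-2$ with all others of size $m$ or $m-1$;
\item one part of size $m+1$ and one part of size $m-2$.
\end{itemize}
Conversely, each of these configurations becomes balanced after one move.

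The main obstacle is verifying that the equality characterisation is exactly right. When $\Pi'\not\sim\minPart$, the second move has ratio $e'/(d'+1)$ with $e'-d'\geqs 2$, so it is strictly greater than $1$; this makes the inequality strict, and that needs a careful check. The bookkeeping should also handle degenerate cases, for instance when every part of $\minPart$ has size $m$ (so $k=m|T|$), where "size $m-1$" parts are absent.
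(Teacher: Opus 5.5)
Your proposal is correct and follows the paper's own argument. You reduce to $\Sn_k$ via a transposition in every stabiliser, then repeatedly move a point from a largest part to a smallest part, each move multiplying the stabiliser order by $e/(d+1)>1$ until a partition of type $\minPart$ is reached. Your explicit treatment of the equality case goes slightly beyond the paper, which leaves this implicit: equality holds iff a single move already yields $\minPart$. For the converse you only need to note that the total $k$ forces a lone part of size $m+1$ to coexist with a part of size $m-1$, and a lone part of size $m-2$ to coexist with a part of size $m$.
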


\begin{proof}
   Suppose for now that $H = \Sn_k$.
   Let $\pi_1$ and $\pi_2$ be parts of $\Pi$ of size $d$ and $e$, respectively. Consider the partition $\Pi_1$ which is identical to $\Pi$ except one point from $[k]$ has been moved from $\pi_2$ to $\pi_1$. 
   Then \[\frac{|H_{(\Pi)}|}{|H_{(\Pi_1)}|} = \frac{d!e!}{(d+1)!(e-1)!} = \frac{e}{d+1}.\]
   Since not all parts of $\Pi$ have size $m$ or $m-1$, the difference $e-d \geqs 2$, so this is strictly greater than $1$. 
   If $\Pi_1 \sim \minPart$ then we are done, otherwise we may iterate this process to produce a sequence of partitions with decreasing stabiliser orders until no part has size greater than $m$ or less than $m-1$, namely a partition equivalent to $\minPart$.

   If $H = \An_k$, then from $k \geqs |T| + 1$ we see that at least one part of each $|T|$-part partition has size at least two, and so the stabiliser in $\Sn_k$ contains a transposition. Hence the stabilisers in $\An_k$ have order precisely half that of the corresponding stabilisers in $\Sn_k$, and the ratio of stabiliser orders is unchanged. 
\end{proof}

The next lemma shows that if $k \ge |T| + 1$ then,  up to $\sim$, the partition $\Sigma$ has at worst the third smallest stabiliser in $H \in \{\An_k, \Sn_k\}$ amongst all partitions of $[k]$, and that its stabiliser is less than twice as big as the smallest.

\begin{lem}
	\label{l:part}
	Suppose that $k \ge |T| + 1$ and  let $\Pi$ be a partition of $[k]$ into $|T|$ 
    parts such that $\Pi \not\sim \minPart$,  $\Pi \not\sim \Sigma$ and $\Pi \not \sim [(m-1)^1, m^{|T|-2}, (m+1)^{1}]$.  Let $H$ be $\An_k$ or $\Sn_k$ as before. Then $|H_{(\Pi)}| > |H_{(\Sigma)}|$. Furthermore, $|H_{(\Sigma)}| \leqs 2|H_{(\minPart)}|$.
\end{lem}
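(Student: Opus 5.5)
Throughout we may take $H=\Sn_k$. As in the proof of Lemma~\ref{l:min_part}, $k\geqs |T|+1$ forces some part of every $|T|$-part partition to have size at least $2$, so passing to $\An_k$ halves every stabiliser and leaves all ratios unchanged. For a partition $\Pi$ with part sizes $p_1,\dots,p_{|T|}$ we have $|H_{(\Pi)}|=\prod_j p_j!$. The plan is therefore to compare the numbers
\[
\rho(\Pi):=|H_{(\Pi)}|/|H_{(\minPart)}|
\]
directly. Write $\Lambda\sim[(m-1)^1,m^{|T|-2},(m+1)^1]$ for the excluded type. Set $u=k-(m-1)|T|\in\{1,\dots,|T|\}$, so that $\minPart\sim[(m-1)^{|T|-u},m^u]$.

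First I compute $\rho(\Sigma)$ in each of the six cases of \eqref{eq:sigma}:
\begin{itemize}
\item $u=1$: $\rho(\Sigma)=\frac{(m-2)!\,m!}{(m-1)!^2}=\frac{m}{m-1}$.
\item $u=2$: the same value $\frac{m}{m-1}$.
\item $3\leqs u\leqs |T|-3$: $\rho(\Sigma)=1$.
\item $u=|T|-2$ or $u=|T|-1$: $\rho(\Sigma)=\frac{m+1}{m}$.
\item $u=|T|$: $\rho(\Sigma)=\left(\frac{m+1}{m}\right)^2$.
\end{itemize}
Since $m\geqs 2$, the first five values are at most $2$. In the last case $\left(\frac{m+1}{m}\right)^2\leqs 16/9<2$ only when $m\geqs 3$. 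The case $m=2$, $k=2|T|$, gives ratio $9/4$, so I would recheck that case carefully. There I would compare against the actual definitions and against $\Lambda$, which has ratio $3/2$, and if necessary treat $m=2$ separately.

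For the main inequality I would classify partitions $\Pi\not\sim\minPart$ by how far they are from $\minPart$. Moving one point from a part of size $a$ to a part of size $b$ multiplies the stabiliser order by $(b+1)/a$. Every $\Pi$ is obtained from some representative of $\minPart$ by such moves, and the reverse of the process in Lemma~\ref{l:min_part} shows that every intermediate step has ratio greater than $1$. Lemma~\ref{l:min_part} then gives
\[
\rho(\Pi)\geqs \frac{e}{d+1},
\]
with equality exactly for the listed types. Hence any $\Pi$ with $e-d\geqs 4$, or with $e\geqs m+2$ or $d\leqs m-3$, has $\rho(\Pi)\geqs \frac{m+2}{m}$, and so is far away. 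A short comparison then shows this exceeds $\rho(\Sigma)$ in every case.

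This leaves a finite list of part-size multisets using only the sizes $m-2,\dots,m+1$ with few deviant parts. These are:
\begin{itemize}
\item a single part of size $m-2$;
\item a single part of size $m+1$;
\item one part of each of the sizes $m-2$ and $m+1$;
\item two parts of size $m-2$;
\item two parts of size $m+1$;
\item and so on.
\end{itemize}
Which of these exist is constrained by $u$, since the sizes must sum to $k$. Their ratios are products of factors $\frac{m}{m-1}$ (one factor for each part of size $m-2$ replacing one of size $m-1$, compensated elsewhere) and $\frac{m+1}{m}$.

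For each of the six ranges of $u$ I would list the admissible types with the smallest ratios. I expect them to be $\minPart$, then $\Lambda$ with ratio $\frac{m+1}{m-1}\cdot\frac{m-1}{m}\cdots$ (computed in context), then $\Sigma$. I would then verify that every remaining type has strictly larger product. The boundary cases $u\in\{1,2,|T|-2,|T|-1,|T|\}$ are exactly where $\minPart$ cannot be perturbed by a mere rearrangement. There $\Sigma$ is chosen as the cheapest perturbation forced by the part-count constraint.

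The main obstacle is this bookkeeping: making sure that in each boundary case no other type ties with or beats $\Sigma$. This will be done by comparing $\frac{m}{m-1}$ against $\frac{m+1}{m}$, using $\frac{m}{m-1}>\frac{m+1}{m}$, and keeping track of which deviations are forced.
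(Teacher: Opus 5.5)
You follow the paper's route: reduce to $\Sn_k$, use the bound $\rho(\Pi)\geqs\frac{e}{d+1}$ of Lemma~\ref{l:min_part} to discard partitions far from $\minPart$, then compare the nearby types residue by residue. Your six values of $\rho(\Sigma)$ are correct.

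\textbf{The main gap.} The decisive step (``I expect them to be $\minPart$, then $\Lambda$, then $\Sigma$'') is only asserted, and it is false. Take $u=2$, i.e.\ $k=(m-1)|T|+2$, and $\Pi\sim[(m-1)^{|T|-1},(m+1)^1]$, which is your ``single part of size $m+1$''. It is not of type $\minPart$ or $\Sigma$. It is not of type $\Lambda$ either, since the parts of $\Lambda$ sum to $m|T|$, so $\Lambda$ only occurs when $u=|T|$. Yet
\[
\rho(\Pi)=\frac{(m-1)!\,(m+1)!}{(m!)^2}=\frac{m+1}{m}<\frac{m}{m-1}=\rho(\Sigma),
\]
by exactly the inequality $\frac{m}{m-1}>\frac{m+1}{m}$ you planned to use. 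For $m=2$ the $\Sn_k$-stabilisers have orders $3!=6$ and $2^3=8$. So the main assertion fails whenever $k\equiv 2\pmod{|T|}$, and no bookkeeping can prove it: this type has to be added to the excluded ones. The paper's proof misses it through the claim ``$d\leqs m-2$ since $k=(m-1)|T|+a$'', which holds for $a=1$ but not for $a=2$.

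\textbf{The ``Furthermore'' part.} Your own computation $\rho(\Sigma)=\frac94$ at $k=2|T|$ is conclusive. The bound $|H_{(\Sigma)}|\leqs 2|H_{(\minPart)}|$ is simply false there, so treating $m=2$ separately cannot help; the correct uniform constant is $\frac94$. The paper's proof uses $\rho(\Sigma)=\frac{m+1}{m}$ also for $k=m|T|$, overlooking the second part of size $m+1$.

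\textbf{A fixable hole in the far-away step.} Since $\frac{m+2}{m}<\frac{(m+1)^2}{m^2}$ for every $m$, the threshold $\frac{m+2}{m}$ never beats $\rho(\Sigma)$ at $u=|T|$. At $m=2$ with $u\in\{1,2\}$ it only ties. At $u=|T|$ argue directly instead:
\begin{itemize}
\item a part of size $m+j$ contributes $\prod_{i=1}^{j}(m+i)\geqs (m+1)^j$;
\item a part of size $m-j$ contributes $1/\prod_{i=0}^{j-1}(m-i)\geqs m^{-j}$.
\end{itemize}
Hence $\rho(\Pi)\geqs\left(\frac{m+1}{m}\right)^P$, where $P$ is the total excess over $m$, with strict inequality if some part differs from $m$ by at least $2$. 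This leaves only $\Lambda$ (with $P=1$) and $\Sigma$.

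\textbf{What can be proved.} With the two amendments to the statement (exclude $[(m-1)^{|T|-1},(m+1)^1]$ when $k\equiv 2\pmod{|T|}$, and replace $2$ by $\frac94$), your method goes through. The amended lemma still suffices for Lemma~\ref{l:greedy_two_stab}. The new type has a unique part of its size, so as in the case $k\equiv\pm1\pmod{|T|}$ there, $|G_{D,D\mathbf{t}}|\geqs|T|\,|Q_{(\mathcal{P}^{\mathbf{t}})}|$. For $k=m|T|$, the factors $|T|$ (for $\minPart$) and $|C_T(z)|\geqs 3$ (for $\Lambda$) still exceed the required ratios $\left(\frac{m+1}{m}\right)^2$ and $\frac{m+1}{m}$.
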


\begin{proof}
	If $k \in \{(m-1)|T| + 3, \ldots, m|T|-3\}$ then $\Sigma\sim\Gamma_{k, |T|}$ and this is immediate from Lemma~\ref{l:min_part}, so suppose otherwise. Let $d$ and $e$ be the sizes of the smallest and largest parts of $\Pi$, respectively. Then Lemma~\ref{l:min_part} shows that $$|H_{(\Pi)}| \geqs \frac{e}{d+1}|H_{(\minPart)}|,$$ with equality only under very restricted conditions.  We compare this bound with the value of $|H_{(\Sigma)}|$ for the remaining values of $k$.

    Suppose $k = (m-1)|T| + a$ with $a = 1, 2$, where $\minPart \sim [(m-1)^{|T|-1}, m]$ or $\minPart \sim [(m-1)^{|T|-2}, m^2]$, respectively.
    Then Lemma~\ref{l:min_part} shows that
    $$|H_{(\Sigma)}| = \frac{m}{m-1}|H_{(\minPart)}| \leqs 2|H_{(\minPart)}|.$$
    Now $e \geqs m$, so if $d < m-2$ then the result follows for all $e$. Furthermore, $d \leqs m-2$ (since $k = (m-1)|T|+a$), so if $e \geqs m+1$ then the result follows. The only remaining case is $d = m-2$ and $e = m$, but now $\Pi \not\sim\Sigma$ shows that $\Pi$ has at least $a + 2$ parts of size $m-2$, so $|H_{(\Pi)}| > |H_{(\Sigma)}|$ in this case as well. 

    Our next cases are  $k = m|T|-a$ with $a = 2, 1, 0$, where $\minPart \sim [(m-1)^2, m^{|T|-2}]$, $ \minPart \sim [(m-1), m^{|T|-1}]$, or $\minPart \sim [m^{|T|}]$, respectively. This time Lemma~\ref{l:min_part} shows that  $$|H_{(\Sigma)}| = \frac{m+1}{m}|H_{(\minPart)}|  <  2|H_{(\minPart)}|. $$ 
    Now $e \geqs m$, so if $d \leqs  m-2$ then the result follows. Hence we may assume that $d = m-1$, and so (since $\Pi \not\sim \minPart$) we deduce that $e \geqs m+1$. Furthermore, if $e \geqs m+2$ then the result follows, so we may assume that $d =  m-1$ and $e = m+1$. But $\Pi \not\sim \Sigma$ and $\Pi \not\sim [(m-1)^1, m^{|T|-2}, (m+1)^{1}]$, so $\Pi$ has at least $2$ parts of size $m+1$ if $a \in \{1, 2\}$, and at least three if $a = 0$. If $a \in \{1, 2\}$ this completes the argument, so assume that $a = 0$ and $\Pi \sim [(m-1)^\alpha, m^{|T|-2\alpha},(m+1)^{\alpha}]$ for some $\alpha > 2$. Then 
    \[\frac{|H_{(\Pi)}|}{|H_{(\Sigma)}|} = \frac{((m-1)!)^{\alpha} (m!)^{|T|-2\alpha}((m+1)!)^{\alpha}}
    {((m-1)!)^{2} (m!)^{|T|-4}((m+1)!)^{2}} = \frac{((m-1)!)^{\alpha-2} ((m+1)!)^{\alpha-2}}{(m!)^{2\alpha-4}} 
    = \frac{(m+1)^{\alpha - 2}
    }{m^{\alpha-2}}\] which is strictly greater than $1$, as required.
    \end{proof}

\subsection{Proof of Theorem~\ref{thm:greedy}(iii)}

Recall from the introduction the groups $P \le \Sn_k$ and $Q \le G \cap \Sn_k$, and recall \eqref{e:GD} that
\begin{equation*}
    G_D = D \leqs \{(\varphi,\dots,\varphi)\sigma:\varphi\in\Aut(T),\ \sigma\in P\}.
\end{equation*}
We now analyse the two-point stabilisers in $G$. Throughout this section we shall assume that $\An_k \le P$. 
For each $\mathbf{t} := (t_1,\dots,t_k)\in T^k$, there is an associated partition $\mathcal{P}^{\mathbf{t}}$ of $[k]$ with parts $\{\mathcal{P}_s^{\mathbf{t}} : s\in T\}$
such that $j\in\mathcal{P}_s^{\mathbf{t}}$ if $t_j = s$: some parts of $\mathcal{P}^{\mathbf{t}}$ may be empty. We say that $s \in T$ is the \emph{label} of $\mathcal{P}^{\mathbf{t}}_s \subset [k]$, and write $P_{\mathcal{P}^{\mathbf{t}}_s}$ for the setwise stabiliser in $P$ of $\mathcal{P}^{\mathbf{t}}_s$.

\begin{lem}[{\cite[Lemma 3.3]{FHLR_Saxl},  \cite[Lemma 2.2]{H_diag}}]\label{l:H_diag_l:2.2}
	Let $k\geqs 2$, $\mathbf{t} = (t_1,\dots,t_k)\in T^k$, and let $\mathcal{P}^{\mathbf{t}}$ be as above. Suppose that $(\varphi,\dots,\varphi)\sigma\in G_{D\mathbf{t}}$. Then the following properties hold.
	\begin{itemize}\addtolength{\itemsep}{0.2\baselineskip}
		\item[{\rm (i)}] $\sigma\in P$ permutes the parts of $\mathcal{P}^{\mathbf{t}}$. 
        \item[{\rm (ii)}] If $\mathcal{P}_1^{\mathbf{t}} \neq \emptyset$, and $|\mathcal{P}_1^{\mathbf{t}}|\ne |\mathcal{P}_s^{\mathbf{t}}|$ for all $s \in T \setminus \{1\}$, then 
        \begin{enumerate}
            \item[(a)] $t_j^\varphi = t_{j^\sigma}$ for all $j$; and 
		    \item[(b)] for any $p\in \mathbb{Z}_{\geqs 0}$, the element $\varphi \in \Aut(T)$ setwise stabilises the set of elements of $T$ labelling parts of $\mathcal{P}^{\mathbf{t}}$ of size $p$.
	    \end{enumerate}
	\end{itemize}
\end{lem}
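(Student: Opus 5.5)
The plan is to compute directly with the action formula \eqref{e:GD}. The element $g = (\varphi,\dots,\varphi)\sigma \in G$ fixes $D\mathbf{t}$ exactly when $D(t_{1^{\sigma^{-1}}}^\varphi,\dots,t_{k^{\sigma^{-1}}}^\varphi) = D(t_1,\dots,t_k)$. By the description of $\Omega$, this holds if and only if there is some $u \in T$ such that
\[
t_{j^{\sigma^{-1}}}^{\varphi} = u\, t_j \quad \mbox{for all } j,
\]
up to the side on which the diagonal acts. The identification $\Omega = \{D(t_1,\dots,t_k)\}$ says that $D(s_1,\dots,s_k) = D(t_1,\dots,t_k)$ if and only if $(s_j t_j^{-1})$ is a constant element of the diagonal copy of $T$ in $D_0$. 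So the first step is to record this equivalence, namely that $g \in G_{D\mathbf{t}}$ if and only if there is $u \in T$ with $t_{j^{\sigma^{-1}}}^{\varphi} = u t_j$ for every $j$. I will take care over left versus right multiplication.

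For part (i), I reindex the equivalence as $t_{j^{\sigma}} = u^{-1} t_j^{\varphi}$, perhaps with $u$ replaced by $u^{\varphi}$. Consider the map $\psi \colon T \to T$ given by $s \mapsto u^{-1} s^{\varphi}$. It is a bijection of $T$, and it satisfies $t_{j^\sigma} = \psi(t_j)$. Hence if $t_j = s$ then $t_{j^\sigma} = \psi(s)$, so $\sigma$ maps $\mathcal{P}^{\mathbf{t}}_s$ into $\mathcal{P}^{\mathbf{t}}_{\psi(s)}$. Since $\sigma$ and $\psi$ are both bijections, this inclusion is an equality. Therefore $\sigma$ permutes the parts, and it carries the part labelled $s$ to the part labelled $\psi(s)$. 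Because $\sigma$ is a bijection on $[k]$, it also preserves part sizes.

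For part (ii), the assumption is that $\mathcal{P}^{\mathbf{t}}_1$ is nonempty and that its size is shared by no other part. Since $\sigma$ preserves part sizes, it must map $\mathcal{P}^{\mathbf{t}}_1$ to itself, so $\psi(1) = 1$. Now $\psi(1) = u^{-1}1^{\varphi} = u^{-1}$, so $u = 1$. The relation then becomes $t_{j^\sigma} = t_j^{\varphi}$, which is (a). For (b), $\psi = \varphi$ now, and $\sigma$ maps the part labelled $s$ onto the part labelled $s^\varphi$ while preserving sizes. So $\varphi$ maps the set of labels of parts of size $p$ into itself, and since $T$ is finite this is a setwise stabilisation. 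The case $p = 0$ is included: labels of empty parts go to labels of empty parts because $\varphi$ is a bijection on $T$.

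The only real obstacle is bookkeeping. I have to get the convention for the diagonal coset right, i.e.\ whether $u$ multiplies on the left or the right, and whether $\varphi$ is applied before or after the shift, so that $\psi$ comes out genuinely of the form $s \mapsto u' s^{\varphi}$ and the evaluation $\psi(1)$ really forces $u' = 1$. Everything else is immediate.
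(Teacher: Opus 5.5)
Your proposal is correct and takes essentially the same approach the paper relies on. With the paper's right-coset conventions, $D\mathbf{s} = D\mathbf{t}$ if and only if $s_j = u\,t_j$ for a single fixed $u \in T$, so your hedging resolves cleanly: the relation is exactly $t_j^{\varphi} = u\,t_{j^\sigma}$ with no twist by $\varphi$. The paper quotes this lemma from \cite{FHLR_Saxl,H_diag} without proof, but this is precisely the identity it invokes in the proof of Lemma~\ref{l:two_stab_sigma}, and your bijection $\psi(s) = u^{-1}s^{\varphi}$ with $\psi(1) = u^{-1}$ forcing $u = 1$ completes (i) and (ii).
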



\begin{cor}
	\label{c:stab_2-parts}
	Let $\mathbf{t} \in T^k$ and $\mathcal{P}^{\mathbf{t}}$ be as above. 
    \begin{itemize}\addtolength{\itemsep}{0.2\baselineskip}
        \item[{\rm (i)}] The group $Q_{(\mathcal{P}^{\mathbf{t}})}$ is a subgroup of the two-point stabiliser $G_{D , D\mathbf{t}}$.
        \item[{\rm (ii)}] If $G = W$ and $\mathcal{P}^{\mathbf{t}}$ has precisely two non-empty parts, labelled $1$ and $t$ and of distinct sizes, then 
	\begin{equation*}
	\begin{aligned}
	G_{D, D\mathbf{t}} &= \{(\varphi,\dots,\varphi)\sigma: \varphi\in C_{\Aut(T)}(t), \sigma\in P_{\mathcal{P}_1^{\mathbf{t}}, \mathcal{P}_t^{\mathbf{t}}}\} \cong C_{\Aut(T)}(t)\times P_{\mathcal{P}_1^{\mathbf{t}}}.
	\end{aligned}
	\end{equation*}
    \end{itemize}
\end{cor}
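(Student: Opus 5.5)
We have to prove Corollary~\ref{c:stab_2-parts}. The tools are the action formula \eqref{e:GD} and Lemma~\ref{l:H_diag_l:2.2}. Throughout we use $G_D = D$, so every element of $G_{D,D\mathbf{t}}$ has the form $(\varphi,\dots,\varphi)\sigma$ with $\sigma \in P$. Such an element maps $D\mathbf{t}$ to $D((t_{1^{\sigma^{-1}}})^\varphi,\dots,(t_{k^{\sigma^{-1}}})^\varphi)$.

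\emph{Part (i).} Take $\sigma \in Q_{(\mathcal{P}^{\mathbf{t}})}$. Then $\sigma = (1,\dots,1)\sigma \in G \cap D_0 = D = G_D$, so $\sigma$ fixes $D$. Since $\sigma$ fixes every part $\mathcal{P}^{\mathbf{t}}_s$ setwise, we have $t_{j^{\sigma^{-1}}} = t_j$ for all $j$. Applying \eqref{e:GD} with $\varphi = 1$ gives $(D\mathbf{t})^\sigma = D\mathbf{t}$, which proves (i).

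\emph{Part (ii), the inclusion $\supseteq$.} Assume $G = W$, write $A = \mathcal{P}^{\mathbf{t}}_1$ and $B = \mathcal{P}^{\mathbf{t}}_t$, and suppose $|A| \ne |B|$. Because $G = W$, the group $D$ is the full group $\{(\varphi,\dots,\varphi)\sigma : \varphi \in \Aut(T),\ \sigma \in P\}$. Take $\varphi \in C_{\Aut(T)}(t)$ and $\sigma \in P_{A,B}$. Then $\sigma$ preserves the labelling and $\varphi$ fixes both labels $1$ and $t$. Hence the formula gives $(D\mathbf{t})^{(\varphi,\dots,\varphi)\sigma} = D\mathbf{t}$.

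\emph{Part (ii), the inclusion $\subseteq$.} Take $(\varphi,\dots,\varphi)\sigma \in G_{D,D\mathbf{t}}$. By Lemma~\ref{l:H_diag_l:2.2}(i), $\sigma$ permutes the parts of $\mathcal{P}^{\mathbf{t}}$. There are only two non-empty parts and they have distinct sizes, so $\sigma$ fixes each of $A$ and $B$; thus $\sigma \in P_{A,B}$. The hypothesis of Lemma~\ref{l:H_diag_l:2.2}(ii) holds: $A \ne \emptyset$, and $|A|$ differs from the size of every other part, since the remaining parts are $B$ or empty and $A$ is non-empty. Part (ii)(a) then gives $t_j^\varphi = t_{j^\sigma}$. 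Choosing $j \in B$ gives $t^\varphi = t$, so $\varphi \in C_{\Aut(T)}(t)$. This proves the displayed equality.

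\emph{The isomorphism.} Since $A \cup B = [k]$, we have $P_{A,B} = P_{A}$. The map $(\varphi,\dots,\varphi)\sigma \mapsto (\varphi,\sigma)$ is an isomorphism, because diagonal automorphisms commute with permutations of coordinates. This gives $C_{\Aut(T)}(t) \times P_{A}$.

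No step is a real obstacle. The point needing most care is the invocation of Lemma~\ref{l:H_diag_l:2.2}(ii): one must check that empty parts do not violate its size hypothesis, and this holds because $A$ is non-empty.
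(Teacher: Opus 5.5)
Your proposal is correct and follows the paper's route. Part (i) is the same direct application of \eqref{e:GD}. For part (ii) you spell out what the paper compresses into ``an immediate corollary of Lemma~\ref{l:H_diag_l:2.2}'': parts (i) and (ii)(a) of that lemma give one inclusion, a direct check gives the other, and commutation of diagonal elements with permutations gives the direct product decomposition.
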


\begin{proof}
    For (i), if $(1,\dots,1)\sigma = \sigma \in Q_{(\mathcal{P}^{\mathbf{t}})}$ then $t_{i^\sigma} = t_i$ for all $i \in [k]$. Then \eqref{e:GD} shows that $\sigma\in G_{D\mathbf{t}}$, and so $Q_{(\mathcal{P}^{\mathbf{t}})} \leqs G_{D,D\mathbf{t}}$. Part (ii) is an immediate corollary of Lemma \ref{l:H_diag_l:2.2}.
\end{proof}

Let $\ell = \lceil\frac{\log k}{\log |T|}\rceil$. We first focus on the case where $k\geqs |T|+1$ (so $\ell\geqs 2$). Recall our assumption that $\An_k \le P$. In this setting, by \cite[Corollary 2.6]{H_diag}, 
\begin{equation}\label{eq:wreath}
    \mbox{the group $T \wr \An_k$ is a subgroup of $G$, and so $\An_k \le Q \le \Sn_k$.}
\end{equation}

To begin, we show that there is an element of $\Omega$, corresponding to a partition of type $\Sigma$ (as in \eqref{eq:sigma}), such that the resulting two-point stabiliser in $G$ is precisely the group $Q_{(\Sigma)} \leqs \Sn_k$: we shall then show  that 
the greedy algorithm chooses a second base point of this sort.

\begin{lem}
	\label{l:two_stab_sigma}
	Suppose $k\geqs |T|+1$. Then there exists an $\mathbf{s}\in T^k$ with $\mathcal{P}^{\mathbf{s}} = \Sigma$ such that $G_{D,D\mathbf{s}} = Q_{(\Sigma)}$.
\end{lem}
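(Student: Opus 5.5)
We construct $\mathbf{s}$ by choosing the labels of the parts of $\Sigma$ so that no nontrivial automorphism of $T$ preserves the induced labelling data. Then we use Lemma~\ref{l:H_diag_l:2.2} to force $\varphi = 1$ and $\sigma \in P_{(\Sigma)}$, so that $G_{D,D\mathbf{s}} = G \cap \Sn_k \cap P_{(\Sigma)} = Q_{(\Sigma)}$. The reverse inclusion $Q_{(\Sigma)} \leqs G_{D,D\mathbf{s}}$ is Corollary~\ref{c:stab_2-parts}(i), so only the inclusion $\subseteq$ needs work.

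Write $\Sigma = \{\pi_t : t\in T\}$ with exactly $|T|$ parts, all nonempty when $k \geqs |T|+1$ (sizes are $\geqs m-2\geqs 0$; if $m=2$ and the type has a part of size $0$, one part is empty, which we allow). Each part type in \eqref{eq:sigma} has a distinguished size class that is small. In the extreme cases it consists of a unique part of size $m-2$ or $m+1$. In the generic case $\Sigma\sim\minPart$ it consists of the parts of size $m-1$ or of size $m$, one class having at least $3$ members and the other being smaller. We assign the label $1$ to a part whose size is unique, when such a part exists. Then Lemma~\ref{l:H_diag_l:2.2}(ii) applies, so $\varphi$ satisfies $s_j^\varphi = s_{j^\sigma}$ and $\varphi$ stabilises, for each $p$, the set $X_p\subseteq T$ of labels of parts of size $p$.

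Any $\varphi \in \Aut(T)$ with $\varphi(X_p)=X_p$ for all $p$ and fixing $1$ is then trivial, by the following construction. Choose a pair $x,y$ with $\la x,y\ra = T$ and with no nontrivial automorphism fixing both $x$ and $y$. This is possible since $C_{\Aut(T)}(x)\cap C_{\Aut(T)}(y)=1$ for suitable generating pairs; for instance, \cite[Propositions 3.8, 3.14, 3.15]{FHLR_Saxl}-type results give a base of size $3$ for the $k=2$ group, which translates exactly into such a pair. Put $x$ and $y$ into distinct size classes, each otherwise consisting of labels chosen so that $x$ (respectively $y$) is singled out. The cleanest way is to make the class containing $x$ equal to $\{x\}$, using the unique-size parts in the extreme cases. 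Once $\varphi$ fixes $x$ and $y$, we get $\varphi=1$. Then $s_j = s_{j^\sigma}$ for all $j$, so $\sigma$ fixes every part of $\Sigma$, and $(1,\dots,1)\sigma\in G$ gives $\sigma\in Q_{(\Sigma)}$.

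The main obstacle is the generic case $\Sigma\sim\minPart$ and the cases $k=m|T|-1, m|T|$, where there may be no part of unique size. There, Lemma~\ref{l:H_diag_l:2.2}(ii) does not apply directly, and $\varphi$ may move $1$ to another label. Here we would invoke the argument of \cite[Theorem 4]{H_diag}: the diagonal action lets us replace $\mathbf{s}$ by $\mathbf{s}$ left-multiplied by a constant, which permutes labels by a translation. We therefore choose the labelling of the smallest size class, say $X=\{1,a,b\}$ or a larger such set, so that the sets $t^{-1}X$ for $t\in X$ are pairwise non-$\Aut(T)$-conjugate and have trivial setwise stabiliser in $\Aut(T)$. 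Counting arguments, comparing $|\Aut(T)|$ to the number of such subsets, show this choice exists once $|X|\geqs 3$; if $|X|\leqs 2$, the complementary class is large and we use it instead. This rules out translations and forces the element to lie in the $\varphi$-trivial case above. Since $k\geqs|T|+1$ forces $m\geqs 2$, all parts involved are large enough for this, and the statement follows.
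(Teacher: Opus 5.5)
Your proposal has a genuine gap in the ``extreme'' cases, and the remaining cases rest on an existence claim you assert but do not prove.

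\textbf{The extreme cases.} For $k\in\{(m-1)|T|+1,(m-1)|T|+2,m|T|-2,m|T|-1\}$ there is exactly one part of unique size, and you have already labelled it $1$. So you cannot also ``make the class containing $x$ equal to $\{x\}$''. The two remaining label classes have sizes $2$ and $|T|-3$, or $3$ and $|T|-4$. Since $\varphi$ fixes $1$, $\varphi$ stabilises the large class if and only if it stabilises the small one. So placing $y$ in the large class singles out nothing. What you actually need is a $2$- or $3$-subset of $T\setminus\{1\}$ with trivial setwise stabiliser in $\Aut(T)$. A pair with $C_{\Aut(T)}(x)\cap C_{\Aut(T)}(y)=1$ does not give this, because an automorphism may permute the small class. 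For a $2$-class $\{x,y\}$, nothing you chose prevents an automorphism swapping $x$ and $y$.

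There are two further problems here. When $m=2$ (that is, $k=|T|+1,|T|+2$), the unique-size part has size $m-2=0$. Then Lemma~\ref{l:H_diag_l:2.2}(ii), which needs $\mathcal{P}^{\mathbf{t}}_1\neq\emptyset$, does not apply as you claim. Also, your list of which cases have a unique part is off: $k=m|T|$, of type $[(m-1)^2,m^{|T|-4},(m+1)^2]$, has none, while $k=m|T|-1$ does.

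\textbf{The other cases.} Your condition on $X$ (with $1\in X$) is exactly that $X$ has trivial stabiliser in $\Hol(T)$. Its existence is \cite[Theorem 4]{H_diag}: $\Hol(T)$ has a regular orbit on $b$-subsets for $3\leqs b\leqs|T|-3$. A ``counting argument comparing $|\Aut(T)|$ to the number of subsets'' does not prove this. Comparing orders only bounds the number of orbits; it does not produce a regular orbit. That needs fixed-point estimates, which are delicate for $|X|=3$.

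\textbf{The fix.} Once you cite that theorem, neither the case split nor Lemma~\ref{l:H_diag_l:2.2}(ii) is needed; this is the paper's route.
\begin{itemize}
\item Let $X$ be the set of labels of parts of size $m'$, where $m'=m-1$ if $k\in\{(m-1)|T|+1,(m-1)|T|+2\}$ and $m'=m$ otherwise.
\item By \eqref{eq:sigma}, $3\leqs|X|\leqs|T|-3$ in every case, so you may choose $\mathbf{s}$ with $\mathcal{P}^{\mathbf{s}}=\Sigma$ and $X$ having trivial $\Hol(T)$-stabiliser.
\item If $(\varphi,\dots,\varphi)\sigma\in G_{D,D\mathbf{s}}$, there is a unique $g\in T$ with $s_j^\varphi=g\,s_{j^\sigma}$ for all $j$.
\item Since $\sigma$ permutes the parts preserving sizes, the holomorph element $t\mapsto g^{-1}t^\varphi$ stabilises $X$. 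Hence $g=1$ and $\varphi=1$, so $\sigma\in Q_{(\Sigma)}$.
\item The reverse inclusion is Corollary~\ref{c:stab_2-parts}(i).
\end{itemize}
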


\begin{proof}
	In view of \eqref{eq:sigma}, let $m' = m-1$ if $k \in \{(m-1)|T|+1, (m-1)|T|+2\}$, otherwise let $m' = m$. Let $b$ be the number of parts of $\Sigma$ of size $m'$.  Then by \cite[Theorem 4]{H_diag}, since $3\leqs b\leqs |T|-3$, there exists an $\mathbf{s} = (t_1,\dots,t_k)\in T^k$ with $\mathcal{P}^{\mathbf{s}} = \Sigma$ such that the setwise stabiliser of $\{t \in T :|\mathcal{P}_t^{\mathbf{s}}| = m'\}$ in $\Hol(T)$ is trivial. 
	
	Let $(\varphi,\dots,\varphi)\sigma\in G_{D,D\mathbf{s}}$, so that $\sigma\in P_{\{\Sigma\}}$ by Lemma~\ref{l:H_diag_l:2.2}(i). In particular, this implies that $\sigma$ setwise stabilises 
    $\{\mathcal{P}_t^{\mathbf{s}} : |\mathcal{P}_t^{\mathbf{s}}| = m'\}$. 
    There exists a unique $g\in T$ such that $t_j^\varphi = gt_{j^\sigma}$ for all $j\in [k]$. Hence, if $|\mathcal{P}_t^{\mathbf{s}}| = m'$, then $|\mathcal{P}_{g^{-1}t^\varphi}^{\mathbf{s}}| = m'$. Thus, $g^{\varphi^{-1}}\varphi$ is in the setwise stabiliser in $\Hol(T)$ of $\{t \in T :|\mathcal{P}_t^{\mathbf{s}}| = m'\}$, which is trivial by the choice of $\mathbf{s}$. It follows that $\varphi = 1$ and $g = 1$, and so $t_j = t_{j^\sigma}$ for all $j\in [k]$. This shows that $\sigma\in P_{(\Sigma)}\cap G = Q_{(\Sigma)}$, which yields $G_{D,D\mathbf{s}}\leqs Q_{(\Sigma)}$. Corollary~\ref{c:stab_2-parts}(i) showed that $Q_{(\Sigma)}\leqs G_{D,D\mathbf{s}}$, so the proof is complete. 
\end{proof}

 Now as promised we show that the greedy algorithm chooses a second base point corresponding to a partition of type $\Sigma$. This will be a key tool in what follows. 
 
\begin{lem}
	\label{l:greedy_two_stab}
	Suppose $k\geqs |T|+1$, and let $\mathbf{t}\in T^k$ be such that $|G_{D,D\mathbf{t}}|$ is minimum. Then $\mathcal{P}^{\mathbf{t}}\sim\Sigma$ and $G_{D,D\mathbf{g}} = Q_{(\mathcal{P}^{\mathbf{t}})}$. 
\end{lem}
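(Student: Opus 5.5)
The plan is to compare $|G_{D,D\mathbf{t}}|$ with $|Q_{(\Sigma)}|$. The two facts to be combined are the lower bound $|G_{D,D\mathbf{t}}|\geqs |Q_{(\mathcal{P}^{\mathbf{t}})}|$ from Corollary~\ref{c:stab_2-parts}(i) and the partition inequalities of Lemmas~\ref{l:min_part} and \ref{l:part}. (The conclusion should read $G_{D,D\mathbf{t}} = Q_{(\mathcal{P}^{\mathbf{t}})}$; the $\mathbf{g}$ is a typo.)

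\textbf{Step 1 (comparison).} By Lemma~\ref{l:two_stab_sigma} there is $\mathbf{s}$ with $G_{D,D\mathbf{s}} = Q_{(\Sigma)}$. Minimality of $\mathbf{t}$ then gives
\[ |Q_{(\mathcal{P}^{\mathbf{t}})}| \leqs |G_{D,D\mathbf{t}}| \leqs |Q_{(\Sigma)}|. \]
By \eqref{eq:wreath} we have $Q\in\{\An_k,\Sn_k\}$, so Lemma~\ref{l:part} applies with $H = Q$. It shows that $\mathcal{P}^{\mathbf{t}}$ has type $\Sigma$, or $\minPart$, or $[(m-1)^1,m^{|T|-2},(m+1)^1]$. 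The last type only arises in the cases $k\in\{m|T|-2,m|T|-1,m|T|\}$, and for $k = m|T|-1$ it equals $\Sigma$.

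\textbf{Step 2 (excluding the small partitions).} This is the main obstacle. If $\mathcal{P}^{\mathbf{t}}\sim\minPart$ or $\mathcal{P}^{\mathbf{t}}\sim[(m-1)^1,m^{|T|-2},(m+1)^1]$ with $\mathcal{P}^{\mathbf{t}}\not\sim\Sigma$, the goal is to show $|G_{D,D\mathbf{t}}| > |Q_{(\Sigma)}|$.

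The idea is that such a partition has at most two distinct part sizes with a very unbalanced multiplicity pattern, or (for $k = m|T|$) all parts of equal size. This forces extra symmetry: there is a nontrivial element $(\varphi,\dots,\varphi)\sigma$ with $\sigma\notin Q_{(\mathcal{P}^{\mathbf{t}})}$ lying in $G_{D,D\mathbf{t}}$.

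Concretely, consider a set $S\subseteq T$ of labels of the distinguished parts, meaning the parts of size $m$ or $m-1$ in a class of size at most $2$ or at least $|T|-2$. Such a set, or its complement, has size at most $2$, so it is preserved by a nontrivial element $g\varphi$ of $\Hol(T)$ (for instance translation-inversion $x\mapsto ax^{-1}b$ swapping two points, or fixing one point). Such an element maps parts to parts of the same size. Since parts of equal size can then be matched by some $\sigma\in\An_k$ or $\Sn_k$, adjusting by a transposition within a part of size at least $2$ if needed, this element lifts to the stabiliser. Hence $|G_{D,D\mathbf{t}}|\geqs 2|Q_{(\mathcal{P}^{\mathbf{t}})}|$.

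To conclude, compare with Lemma~\ref{l:min_part}, which gives the ratios $|Q_{(\Sigma)}|/|Q_{(\minPart)}|\in\{\frac{m}{m-1},\frac{m+1}{m}\}<2$. For the middle partition type, the corresponding ratio is $\frac{m+1}{m}\cdot\frac{m}{m+1}$-type comparisons, and these likewise fall below $2$. In each case the result is $|G_{D,D\mathbf{t}}|>|Q_{(\Sigma)}|$, contradicting minimality.

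When $\Sigma\sim\minPart$, i.e.\ $(m-1)|T|+3\leqs k\leqs m|T|-3$, nothing needs excluding. I expect verifying the existence of these nontrivial automorphism lifts, which depends on the multiplicity of $\An_k$ versus $\Sn_k$ and on the sign of $\sigma$, to be the fiddliest part.

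\textbf{Step 3 (equality).} Once $\mathcal{P}^{\mathbf{t}}\sim\Sigma$, we have $|Q_{(\mathcal{P}^{\mathbf{t}})}| = |Q_{(\Sigma)}|$, since stabilisers of partitions of the same type are conjugate in $\Sn_k$ and $\An_k\normeq\Sn_k$. The chain of inequalities in Step 1 then collapses, giving $|G_{D,D\mathbf{t}}| = |Q_{(\mathcal{P}^{\mathbf{t}})}|$. Combined with the containment in Corollary~\ref{c:stab_2-parts}(i), this yields $G_{D,D\mathbf{t}} = Q_{(\mathcal{P}^{\mathbf{t}})}$.
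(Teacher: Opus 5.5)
You have the same overall strategy as the paper, and Steps~1 and~3 are fine. One small correction to Step~1: the type $[(m-1)^1,m^{|T|-2},(m+1)^1]$ has $m|T|$ points, so it occurs only for $k=m|T|$, not for $k=m|T|-1$ or $m|T|-2$.

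The gap is quantitative, in Step~2. A single extra symmetry gives only $|G_{D,D\mathbf{t}}|\geqs 2|Q_{(\mathcal{P}^{\mathbf{t}})}|$. You therefore need $|Q_{(\Sigma)}|<2|Q_{(\mathcal{P}^{\mathbf{t}})}|$ strictly, and this fails in three cases:
\begin{itemize}
\item For $k\in\{|T|+1,|T|+2\}$ we have $m=2$, so the ratio $m/(m-1)$ equals $2$. For example, at $k=|T|+1$ we have $|(\Sn_k)_{(\Sigma)}|=4$ against $|(\Sn_k)_{(\minPart)}|=2$. You then get only $|G_{D,D\mathbf{t}}|\geqs |Q_{(\Sigma)}|$ and cannot exclude a minimiser of type $\minPart$. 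Lemma~\ref{l:part} only asserts $\leqs 2$, not $<2$.
\item For $k=m|T|$ and $\mathcal{P}^{\mathbf{t}}\sim[m^{|T|}]$, the ratio is $\frac{((m-1)!)^2((m+1)!)^2}{(m!)^4}=\bigl(\frac{m+1}{m}\bigr)^2$, not $\frac{m+1}{m}$. At $k=2|T|$ this equals $9/4>2$, so the bound $|H_{(\Sigma)}|\leqs 2|H_{(\minPart)}|$ in Lemma~\ref{l:part} is itself false there.
\end{itemize}

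The missing idea, which is how the paper argues, is to count the whole group of label symmetries rather than one element.
\begin{itemize}
\item First left-translate labels, which does not change $D\mathbf{t}$, so that the distinguished labels are $1$, or $1$ and $z$.
\item For type $[(m\pm 1)^1,m^{|T|-1}]$ or $[m^{|T|}]$, take every $x\in T$. For types $[(m\pm1)^2,m^{|T|-2}]$ and $[(m-1)^1,m^{|T|-2},(m+1)^1]$, take every $x\in C_T(z)$.
\item For each such $x$ there is $\sigma\in\An_k$ mapping $\mathcal{P}^{\mathbf{t}}_s$ onto $\mathcal{P}^{\mathbf{t}}_{s^x}$ for all $s$. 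You can fix the parity with a transposition inside a part of size $m\geqs 2$.
\item Then $(x,\dots,x)\sigma\in T\wr\An_k\leqs G$ by \eqref{eq:wreath}, and it fixes $D$ and $D\mathbf{t}$.
\item Distinct $x$ give distinct cosets of $Q_{(\mathcal{P}^{\mathbf{t}})}$. Hence $|G_{D,D\mathbf{t}}|\geqs |C_T(z)|\cdot|Q_{(\mathcal{P}^{\mathbf{t}})}|$, or $|T|\cdot|Q_{(\mathcal{P}^{\mathbf{t}})}|$ in the first two types.
\item Here $|C_T(z)|\geqs 3$, because no non-identity element of a non-abelian simple group has centraliser of order $2$. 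So the factor exceeds $2$ strictly, which beats every ratio above, including $9/4$ since $|T|\geqs 60$.
\end{itemize}

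Working only with inner automorphisms also repairs two further problems in your sketch. First, $x\mapsto ax^{-1}b$ is not an element of $\Hol(T)$. More generally, an element of $\Hol(T)$ with outer automorphism part need not be realised by an element of $G$. Second, in the type $[(m-1)^1,m^{|T|-2},(m+1)^1]$ the two distinguished labels sit on parts of different sizes. They must therefore each be fixed, not swapped, which is exactly what $C_T(z)$ does after translating the size-$(m-1)$ label to $1$.
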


\begin{proof}
	By Lemma~\ref{l:two_stab_sigma},  there exists an $\mathbf{s}\in T^k$ with $\mathcal{P}^{\mathbf{s}} = \Sigma$ and $|G_{D,D\mathbf{s}}|  =  |Q_{(\Sigma)}|$. Let $\mathbf{t} = (g_1,\dots,g_k) \in T^k$ be such that $\mathcal{P}^{\mathbf{t}} \not \sim\Sigma$. It suffices to  show that $|G_{D,D\mathbf{t}}| > |Q_{(\Sigma)}|$.

    Corollary~\ref{c:stab_2-parts}(i) shows $G_{D,D\mathbf{t}} \geqs Q_{(\mathcal{P}^{\mathbf{t}})}$, and Lemma~\ref{l:part}  shows that if $\mathcal{P}^{\mathbf{t}}\not \sim \minPart$ and $\mathcal{P}^{\mathbf{t}} \not \sim [(m-1)^1, m^{|T|-2}, (m+1)^{1}]$ then 
	$|Q_{(\mathcal{P}^{\mathbf{t}})}| > |Q_{(\Sigma)}|$, which is equal to $|G_{D,D\mathbf{s}}|$.
	Thus, we may assume that either $\mathcal{P}^{\mathbf{t}}\sim \minPart$ (with five possibilities for $k$ modulo $|T|$), or $k = m|T|$ and $\mathcal{P}^{\mathbf{t}} \sim [(m-1)^1, m^{|T|-2}, (m+1)^{1}]$.
	
    First suppose  $k = m|T| +  \epsilon 1$, with $\epsilon = \pm$. Then $\mathcal{P}^{\mathbf{t}}\sim \minPart \sim [(m + \epsilon 1)^1,m^{|T|-1}]$. Without loss of generality, we may assume $|\mathcal{P}_1^{\mathbf{t}}| = m + \epsilon 1$. Then for any $x\in T$,
	\begin{equation*}
	\{t:|\mathcal{P}_t^{\mathbf{t}}| = m\} = T\setminus\{1\} = (T\setminus\{1\})^{x} = \{t:|\mathcal{P}_t^{\mathbf{t}}| = m\}^{x}.
	\end{equation*}
	Since $m \ge 2$ there exists $\sigma\in \An_k$ such that
	\begin{equation*}
	j^\sigma = i\mbox{ if and only if }g_j^{x} = g_i.
	\end{equation*}
	Hence \eqref{eq:wreath} shows that $(x,\dots,x)\sigma\in T \wr \An_k \leqs G$, and  we readily check that
	\begin{equation*}
	D^{(x, \ldots, x)\sigma} = D \quad \mbox{ and } \quad  D\mathbf{t}^{(x,\dots,x)\sigma} = D(g_{1^{\sigma^{-1}}}^{x},\dots,g_{k^{\sigma^{-1}}}^{x}) = D(g_1,\dots,g_k) = D\mathbf{t}.
	\end{equation*}
	Hence $|G_{D,D\mathbf{t}}| \geqs |T|\cdot |Q_{(\mathcal{P}^{\mathbf{t}})}| > 2|Q_{(\mathcal{P}^{\mathbf{t}})}| \geqs |Q_{(\Sigma)}|$ by Lemma~\ref{l:part}, as required. 
	
	Next, suppose $k = m|T| + \epsilon 2$, so $\mathcal{P}^{\mathbf{t}}\sim [(m + \epsilon 1 )^2,m^{|T|-2}]$, and assume that $|\mathcal{P}_1^{\mathbf{t}}| = m + \epsilon 1$. Let $z\in T\setminus\{1\}$ be the other element with $|\mathcal{P}_z^{\mathbf{t}}| = m + \epsilon 1$. Again, for any $x\in C_T(z)$,
	\begin{equation*}
	\{t:|\mathcal{P}_t^{\mathbf{t}}| = m\} = T\setminus\{1,z\} = (T\setminus\{1,z\})^{x} = \{t:|\mathcal{P}_t^{\mathbf{t}}| = m\}^{x}.
	\end{equation*}
	Hence again there exists $\sigma\in \An_k \le Q$ such that
	\begin{equation*}
	j^\sigma = i\mbox{ if and only if }g_j^{x} = g_i,
	\end{equation*}
	and it is easy to check that $(x,\dots,x)\sigma\in G_{D,D\mathbf{t}}$. Since no element of a non-abelian simple group has a  centraliser of order $2$,  
    it follows that $|G_{D,D\mathbf{t}}|\geqs |C_T(z)|\cdot |Q_{(\mathcal{P}^{\mathbf{t}})}| > 2|Q_{(\mathcal{P}^{\mathbf{t}})}| \geqs |Q_{(\Sigma)}|$ by Lemma~\ref{l:part}, as required.
	
	Finally, if $k = m|T|$, then either $\mathcal{P}^{\mathbf{t}}\sim \minPart \sim [m^{|T|}]$ or $\mathcal{P}^{\mathbf{t}}\sim [(m-1)^1,m^{|T|-2},(m+1)^1]$. We leave it as an exercise in each case to show that $|G_{D,D\mathbf{t}}| > 2|Q_{(\mathcal{P}^{\mathbf{t}})}| \geqs |Q_{(\Sigma)}|$.
\end{proof}

\begin{lem}\label{l:ceil_divide}
    Let $m, n, r \in \mathbb{Z}_{\geqs 0}$. Then $\lceil m/n^r \rceil/n = \lceil m/n^{r+1} \rceil$.
\end{lem}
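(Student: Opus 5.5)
The identity is the special case $x = m/n^r$ of the standard fact that $\lceil \lceil x\rceil / n\rceil = \lceil x/n\rceil$ for a real $x$ and a positive integer $n$. We read the statement with $n \geqs 1$, since otherwise the divisions are undefined; in the paper it is applied with $n = |T|$. The plan is to characterise both sides by the set of integers they bound from below, rather than manipulating remainders.

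Put $a = \lceil m/n^r\rceil$ and $b = \lceil m/n^{r+1}\rceil$. For every integer $c$ we will establish the chain of equivalences
\[
c \geqs \lceil a/n\rceil \iff cn \geqs a \iff cn \geqs m/n^r \iff c \geqs m/n^{r+1} \iff c \geqs b .
\]
The first and last equivalences are the defining property of the ceiling: for real $y$ and integer $c$, we have $c \geqs \lceil y\rceil$ if and only if $c \geqs y$. The fourth step holds because dividing by $n^{r+1} > 0$ preserves the inequality.

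The only step using integrality in an essential way is the middle one, $cn \geqs a \iff cn \geqs m/n^r$. The forward direction holds because $a \geqs m/n^r$. For the converse, $cn$ is an integer and $a$ is the least integer that is at least $m/n^r$, so $cn \geqs m/n^r$ forces $cn \geqs a$. This is the main (though minor) point to check.

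It follows that $\lceil a/n\rceil$ and $b$ are integers with exactly the same set of integer upper bounds. Hence they are equal, which gives $\lceil m/n^r\rceil/n$ rounded up equal to $\lceil m/n^{r+1}\rceil$, i.e.\ the identity in the statement.
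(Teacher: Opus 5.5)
Your proof is correct, but it argues differently from the paper.

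The paper writes $m = an^{r+1} + bn^r + c$ with $0 \leqs b \leqs n-1$ and $0 \leqs c \leqs n^r-1$, and splits into two cases:
\begin{itemize}
\item if $c = 0$, then $\lceil m/n^r\rceil = an+b$ and both sides equal $a + \lceil b/n\rceil$;
\item if $c > 0$, then $\lceil m/n^r\rceil = an+b+1$ and both sides equal $a+1$.
\end{itemize}

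You use only the characterisation ``$c \geqs \lceil y\rceil \iff c \geqs y$ for integer $c$''. You show that the two quantities have the same integer upper bounds, so they are equal. Your route avoids the case split and the digit bookkeeping. It also proves the more general identity $\lceil \lceil x\rceil /n\rceil = \lceil x/n\rceil$ for every real $x$ and positive integer $n$, so integrality and non-negativity of $m$ play no role. The only integrality used is that $cn$ is an integer in the middle step, which you rightly single out as the key point. The paper's route is more hands-on and makes the actual common value explicit.

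Two small points.
\begin{itemize}
\item As printed, the lemma has no outer ceiling on the left, and it is then false: for $m=1$, $n=2$, $r=0$ it asserts $1/2 = 1$. The paper's own computation implicitly rounds up; it writes $(an+b)/n = a+\lceil b/n\rceil$. Its application in Lemma~\ref{lem:inductive_step} uses the rounded form $\bigl\lceil \lceil m'/|T|^{i-2}\rceil/|T| \bigr\rceil$. You prove the rounded form, which is the right statement. You should say explicitly that you are correcting the statement, rather than calling your conclusion ``the identity in the statement''.
\item Your first equivalence silently combines the ceiling property with multiplication by $n$. Your fourth step is division by $n$, not by $n^{r+1}$. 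Both are harmless.
\end{itemize}
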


\begin{proof}
    Write $m = an^{r+1} + bn^{r} + c$, where $b \in \{0, \ldots, n-1\}$ and $c \in \{0, \ldots, n^r-1\}$.
If $c = 0$ then $\lceil m/n^r \rceil/n = (an + b)/n = a + \lceil b/n \rceil = \lceil m/n^{r+1}\rceil$. Otherwise $\lceil m/n^r \rceil/n = (an + b + 1)/n = a + \lceil (b+1)/n \rceil = a + 1 = \lceil m/n^{r+1} \rceil$. 
\end{proof}

\begin{lem}\label{lem:inductive_step}
    Suppose $k\geqs |T|+1$. Let $\beta_i = (D, D\mathbf{t}_1, \ldots, D\mathbf{t}_i)$ be the first $i+1$ points chosen by the greedy algorithm, for some $i \in \{1, \ldots, \ell+1\}$. Let
    \begin{equation*}
        m' = 
        \begin{cases}
            m & \mbox{if $k\leqs m|T|-3$};\\
            m+1 & \mbox{if $k\in\{m|T|-2, m|T|-1, m|T|\}$}.
        \end{cases}
    \end{equation*}
    Then $G_{\beta_i} = Q_{(\Pi_i)}$, where $\Pi_i$ is a partition with largest part of size 
    $\lceil m'/|T|^{i-1} \rceil$.
    Furthermore, if $(k \pmod{|T|^i}) \not\in \{-1,-2\}$ then there are at least two parts of this largest size.
\end{lem}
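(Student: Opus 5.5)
The plan is induction on $i$, with Lemma~\ref{l:greedy_two_stab} as the base case and a partition-refinement argument for the inductive step.

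\textbf{Base case $i=1$.} Lemma~\ref{l:greedy_two_stab} gives $G_{\beta_1}=Q_{(\Pi_1)}$ with $\Pi_1\sim\Sigma$. Reading off \eqref{eq:sigma}, the largest part of $\Sigma$ has size $m$ when $k\leqs m|T|-3$ and size $m+1$ otherwise, i.e.\ size $m'$.

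For the count of largest parts: there is a unique largest part exactly in the rows $k=m|T|-2$ and $k=m|T|-1$, that is, when $k\equiv -2,-1\pmod{|T|}$. In every other row there are at least two parts of size $m'$. This covers the rows $[\dots,m^2]$, $[\dots,m^3]$, $[\dots,(m+1)^2]$, and $\minPart$ when $(m-1)|T|+3\leqs k\leqs m|T|-3$.

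\textbf{Inductive step: the stabiliser is a partition stabiliser.} Assume $G_{\beta_i}=Q_{(\Pi_i)}\leqs \Sn_k$, and describe its point stabilisers. A point of $\Omega$ is $D\mathbf{t}$, and $D\mathbf{u}=D\mathbf{t}$ if and only if $\mathbf{u}=(g t_1,\dots,g t_k)$ for some $g\in T$. Hence $\sigma\in Q_{(\Pi_i)}$ fixes $D\mathbf{t}$ if and only if $\sigma$ maps $\mathcal{P}^{\mathbf{t}}_s$ to $\mathcal{P}^{\mathbf{t}}_{gs}$ for a single $g$. In particular
\[
Q_{(\Pi_i\wedge\mathcal{P}^{\mathbf{t}})}\leqs (G_{\beta_i})_{D\mathbf{t}},
\]
where $\Pi_i\wedge\mathcal{P}^{\mathbf{t}}$ denotes the common refinement.

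Conversely, every refinement $\Pi'$ of $\Pi_i$ in which each part of $\Pi_i$ splits into at most $|T|$ pieces arises as $\Pi_i\wedge\mathcal{P}^{\mathbf{t}}$. To see this, label the pieces of each part by distinct elements of $T$.

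Let $\mu$ be the minimum of $|Q_{(\Pi')}|$ over such refinements. I will exhibit a minimising $\Pi'$ realised by a $\mathbf{t}$ that admits no nontrivial translation $g$. For this I will choose the labels generically, which should be easy because the refinement has many pieces and there is a lot of labelling freedom. Then $\min_{\mathbf{t}}|(G_{\beta_i})_{D\mathbf{t}}|=\mu$.

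The greedy algorithm chooses some $\mathbf{t}_{i+1}$ attaining this minimum. Then
\[
\mu\leqs |Q_{(\Pi_i\wedge\mathcal{P}^{\mathbf{t}_{i+1}})}|\leqs |G_{\beta_{i+1}}|=\mu .
\]
So equality holds throughout, $G_{\beta_{i+1}}=Q_{(\Pi_{i+1})}$ with $\Pi_{i+1}=\Pi_i\wedge\mathcal{P}^{\mathbf{t}_{i+1}}$, and $\Pi_{i+1}$ is a minimising refinement. The factor-$2$ difference between $\An_k$ and $\Sn_k$ is harmless here. It is uniform over the refinements being compared, as in Lemma~\ref{l:min_part}, unless some refinement consists of singletons. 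That can only happen at the last steps ($i$ near $\ell+1$), and I will treat it directly there.

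\textbf{Shape of a minimising refinement.} Stabiliser orders multiply over the parts of $\Pi_i$. So, exactly as in the exchange argument of Lemma~\ref{l:min_part}, a minimiser splits each part of size $s$ into $|T|$ pieces of sizes $\lfloor s/|T|\rfloor$ or $\lceil s/|T|\rceil$. Consequently the largest part of $\Pi_{i+1}$ has size $\lceil e/|T|\rceil$, where $e=\lceil m'/|T|^{i-1}\rceil$ is the largest part of $\Pi_i$. Lemma~\ref{l:ceil_divide} turns this into $\lceil m'/|T|^{i}\rceil$, as required.

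\textbf{Main obstacle: counting the largest parts.} I expect the hardest part to be the assertion that $\Pi_{i+1}$ has at least two parts of maximal size when $k\not\equiv -1,-2\pmod{|T|^{i+1}}$. To handle it I would strengthen the induction hypothesis to record the full type of $\Pi_i$: all parts of size $e$ or $e-1$, apart from the controlled perturbation inherited from $\Sigma$, with the number of each size determined by $k$ modulo $|T|^i$. The maximal size $\lceil e/|T|\rceil$ appears twice in $\Pi_{i+1}$ in any of the following situations:
\begin{itemize}
\item two parts of $\Pi_i$ have size $e$;
\item the unique part of size $e$ satisfies $e\bmod |T|\notin\{1\}$, so it splits into at least two maximal pieces;
\item a part of size $e-1$ exists with $\lceil (e-1)/|T|\rceil=\lceil e/|T|\rceil$.
\end{itemize}
Translating these alternatives into residues of $k$ modulo $|T|^{i+1}$ should show that failure occurs only for residues $-1$ and $-2$. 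This residue bookkeeping, especially near the $m'=m+1$ cases, is the step I expect to require the most care.
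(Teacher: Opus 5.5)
Your route is the paper's. It uses induction with base case Lemma~\ref{l:greedy_two_stab}, near-equal splitting of each part by the exchange argument of Lemma~\ref{l:min_part}, Lemma~\ref{l:ceil_divide} for the size of the largest part, and a separate count of largest parts. At the inductive step you are more careful than the paper. The paper simply asserts that the next stabiliser is $Q_{(\Pi_{i-1})}\cap Q_{(\Pi')}$. You notice that a permutation inducing a non-trivial left translation $g$ of the labels also fixes $D\mathbf{t}$, and your sandwich $\mu\leqs |Q_{(\Pi_i\wedge\mathcal{P}^{\mathbf{t}_{i+1}})}|\leqs |G_{\beta_{i+1}}|=\mu$ is the right way to handle this.

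However, the step you defer carries the real content, and ``many pieces, lots of freedom'' is not why it works. Suppose $\sigma\in Q_{(\Pi_i)}$ fixes $D\mathbf{t}$ via a translation $g$. Then $gS_\pi=S_\pi$ for every part $\pi$ of $\Pi_i$, where $S_\pi\subseteq T$ is the set of labels of the larger pieces of $\pi$. If every part of $\Pi_i$ had size divisible by $|T|$, all the $S_\pi$ would be empty. Every near-equal labelling would then be invariant under all of $T$, and no translation-free minimiser would exist. This is exactly why $\Sigma\not\sim\minPart$ when $k=m|T|$. So you must show two things.
\begin{enumerate}
\item[(a)] $\Pi_i$ always has a non-empty part whose size is not a multiple of $|T|$. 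This holds because $\Sigma$ has parts of sizes $m-1$ and $m$, and near-equal splitting of parts of sizes $a$ and $a+1$ produces pieces of sizes $\lfloor a/|T|\rfloor$ and $\lfloor a/|T|\rfloor+1$.
\item[(b)] For $1\leqs r\leqs |T|-1$ there is an $r$-subset $S\subseteq T$ with $\{g\in T: gS=S\}=1$. For $3\leqs r\leqs |T|-3$, use \cite[Theorem~4]{H_diag} as in Lemma~\ref{l:two_stab_sigma}. For $r\leqs 2$, take $\{1\}$ or $\{1,x\}$ with $|x|>2$. For $r\geqs |T|-2$, take complements of these.
\end{enumerate}
Labelling one part as in (a) by such an $S$ gives your translation-free minimiser.

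With this in place, the residue bookkeeping needs no strengthened hypothesis. A unique largest part of $\Pi_{i+1}$ can only come from a unique largest part of $\Pi_i$. So by induction $k=m|T|-a$ with $a\in\{1,2\}$ and $|T|^{i-1}\mid m$, and that part has size $e=m/|T|^{i-1}+1$. It has a unique maximal piece if and only if $e\equiv 1\pmod{|T|}$, that is, $|T|^{i}\mid m$, that is, $k\equiv -a\pmod{|T|^{i+1}}$. Your third alternative is subsumed by the second.
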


\begin{proof}
We prove the result by induction on $i$. 
For $i = 1$,  Lemma~\ref{l:greedy_two_stab} shows that $G_{\beta} = Q_{(\Pi_1)}$
for some partition $\Pi_1 \sim \Sigma$. We see from \eqref{eq:sigma} that the largest part of $\Pi_1$ has size $m'$, 
and that if $k \not\in \{m|T|-1, m|T|-2\}$ then $\Pi_1$ has at least two parts of size $m'$.

Now suppose $i > 1$ and $G_{\beta_{i-i}} = Q_{(\Pi_{i-1})}$, where  
the  
largest part of $\Pi_{i-1}$ has size $\lceil m'/|T|^{i-2} \rceil$. 
Then $G_{\beta_i} = G_{\beta_{i-1},D,D\mathbf{t}_i} = Q_{(\Pi_{i-1})} \cap Q_{(\Pi')}$, where $\Pi'$ is a partition of $[k]$ into $|T|$ parts such that the common refinement of $\Pi_{i-1}$ and $\Pi'$ has minimal order stabiliser in $Q$. 
Since $Q_{(\Pi_{i-1})} =  (\Sn_{a_1} \times \cdots \times \Sn_{a_\ell}) \cap Q$ is 
either $\Sn_{a_1} \times \cdots \times \Sn_{a_\ell}$ or the intersection of this with $\A_k$, it follows from Lemma~\ref{l:min_part} that $\Pi_i$ splits the $j$th part of $\Pi_{i-1}$ into $|T|$ parts, of sizes $\lfloor a_j/|T| \rfloor$ and $\lceil a_j/|T| \rceil$. In particular the largest part has size 
\[\left \lceil \frac{\lceil m'/|T|^{i-2} \rceil}{|T|} \right \rceil,\] which is equal to $\lceil m'/ |T|^{i-1} \rceil$ by Lemma~\ref{l:ceil_divide}. 

For the final claim, if $(k \mod |T|^{i-1}) \not\in \{-1, -2\}$ then there were at least two parts of the largest size in $\Pi_{i-1}$, and hence there will be at least two parts of the largest size in $\Pi_{i}$. Otherwise, if $\lceil m'/|T|^{i-1} \rceil  = \lceil (m+1)/|T|^{i-1} \rceil \neq a|T| + 1$ for some $a \in \mathbb{Z}$ there will also be at least two parts of the largest size in $\Pi_{i}$. Hence there is a unique part of the largest size if and only if $m/|T|^{i-1}$ is divisible by $|T|$, so that $m$ is divisible by $|T|^{i}$, and $(k \pmod{|T|^i}) \in \{-1, -2\}$, as required.
\end{proof}

\begin{prop}\label{p:|T|+1}
    Suppose $k\geqs |T|+1$. Then every greedy base has size $\mathcal{G}(G) \in \{\ell + 1, \ell + 2\}$, with $\mathcal{G}(G) = \ell + 2$ if and only if either $k = |T|^{\ell}$, or $Q = \Sn_k$ and $k \in \{ |T|^{\ell}-1, |T|^{\ell}-2\}$. 
\end{prop}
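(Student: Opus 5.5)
The plan is to track the greedy algorithm through the chain of partition stabilisers given by Lemma~\ref{lem:inductive_step}. By that lemma, after choosing $\beta_i = (D, D\mathbf{t}_1,\ldots,D\mathbf{t}_i)$ greedily we have $G_{\beta_i} = Q_{(\Pi_i)}$, where the largest part of $\Pi_i$ has size $\lceil m'/|T|^{i-1}\rceil$. Moreover, at each step the next point refines every part of $\Pi_{i-1}$ as evenly as possible into $|T|$ pieces (Lemma~\ref{l:min_part}). Every greedy choice is forced up to this type of partition, so every greedy base has the same length. That length is the least $i$ for which $Q_{(\Pi_i)} = 1$.

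Since $\An_k \leqs Q \leqs \Sn_k$ by \eqref{eq:wreath}, there are two cases. If $Q = \Sn_k$, then $Q_{(\Pi_i)} = 1$ if and only if every part of $\Pi_i$ has size at most $1$. If $Q = \An_k$, then $Q_{(\Pi_i)} = 1$ if and only if every part has size at most $1$, or exactly one part has size $2$ and the rest have size at most $1$. In the latter case the stabiliser in $\Sn_k$ is generated by a single transposition, which is odd. So the greedy base has size $i+1$, where $i$ is minimal with $\lceil m'/|T|^{i-1}\rceil \leqs 1$. In the $\An_k$ case we may also stop when $\lceil m'/|T|^{i-1}\rceil = 2$ and there is a unique part of size $2$. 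The final clause of Lemma~\ref{lem:inductive_step} tells us exactly when there is a unique largest part: only when $k \equiv -1,-2 \pmod{|T|^i}$.

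Next I will compute the index. Using $m = \lceil k/|T|\rceil$ and Lemma~\ref{l:ceil_divide}, we get $\lceil m/|T|^{i-1}\rceil = \lceil k/|T|^i\rceil$, which equals $1$ exactly when $i \geqs \ell$. So when $m' = m$ (that is, $k \leqs m|T|-3$) the base has size $\ell+1$. When $m' = m+1$, the quantity $\lceil (m+1)/|T|^{\ell-1}\rceil$ exceeds $1$ exactly when $m = |T|^{\ell-1}$, which means $k \in \{|T|^\ell-2, |T|^\ell-1, |T|^\ell\}$. In these cases the largest part at step $\ell$ has size $2$, so one more point is needed, giving $\ell+2$, unless $Q = \An_k$ and that size-$2$ part is unique. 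Uniqueness happens precisely for $k \equiv -1,-2 \pmod{|T|^\ell}$, that is, $k \in \{|T|^\ell-1, |T|^\ell-2\}$. For $k = |T|^\ell$ there are at least two parts of size $2$, since $\Sigma$ has two parts of size $m+1$ and these stay paired. Hence $\ell+2$ occurs exactly when $k = |T|^\ell$, or when $Q = \Sn_k$ and $k \in \{|T|^\ell-1, |T|^\ell-2\}$. I also need to check that after step $\ell+1$ all parts have size at most $1$, so the size never exceeds $\ell+2$.

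The main obstacle will be the bookkeeping at the boundary in the $\An_k$ case. I must verify precisely which parts of size $2$ survive at step $\ell$ when $m = |T|^{\ell-1}$, starting from the three special shapes of $\Sigma$ in \eqref{eq:sigma}. This requires checking that even refinement of $(m+1)$-parts and $m$-parts yields the claimed count of $2$-parts, and that a lone $2$-part together with singletons gives a trivial stabiliser in $\An_k$ but not in $\Sn_k$. A secondary point is to confirm that the greedy choice at each step $i \geqs 2$ really does refine evenly when $Q = \An_k$. This holds because all stabilisers involved contain a transposition until the final step, so the ratios of orders are the same as in $\Sn_k$, exactly as in the proof of Lemma~\ref{l:min_part}.
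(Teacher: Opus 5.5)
Your proposal is correct and follows the paper's proof essentially step for step. Like the paper, you track the largest part of $\Pi_i$ through Lemma~\ref{lem:inductive_step} and Lemma~\ref{l:ceil_divide}, and you use the ``unique part of size $2$'' criterion to separate $Q=\An_k$ from $Q=\Sn_k$ when $k\in\{|T|^\ell-2,|T|^\ell-1\}$.

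The one point you leave implicit is that, when $Q=\An_k$, the stabiliser is still nontrivial after $\ell$ points. The paper checks this by noting that the largest part of $\Pi_{\ell-1}$ can equal $2$ only when there are at least two parts of that size. Alternatively, it suffices to observe that every greedy base has size at least $b(G)\geqs\ell+1$ by Theorem~\ref{t:H_diag}.
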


\begin{proof}
    By Lemma~\ref{lem:inductive_step}, after the greedy algorithm has chosen $\ell$ base points,
    the remaining 
    stabiliser is $Q_{(\Pi_{\ell-1})}$, for some partition $\Pi_{\ell-1}$ of $[k]$ into parts of size at most $\lceil m'/|T|^{\ell-2} \rceil \geqs 2$. This is equal to $2$ only when $m \leqs 2|T|^{\ell-2} < |T|^{\ell}-2$, so there are at least two parts of the largest size. After one more base point is fixed, 
    the largest part of $\Pi_{\ell}$ has size at most $\lceil m'/|T|^{\ell-1} \rceil$, which is $1$ unless $m' = m+1 = |T|^{\ell-1} + 1$.  This happens only when $k \geqs |T|^{\ell} - 2$ by Lemma~\ref{lem:inductive_step}, so if $k < |T|^{\ell}-2$ then every greedy base has size $\ell +1$.

    Suppose $k \geqs |T|^{\ell} -2$, and notice that $Q_{(\Pi_{\ell + 1})} = 1$ by Lemma~\ref{lem:inductive_step}. If $k = |T|^{\ell}$ then there are at least two parts of size $2$ in $\Pi_{\ell}$, so $Q_{(\Pi_{\ell})} \neq 1$, and hence every greedy base has size $\mathcal{G}(G) = \ell + 2$. If $|T|^{\ell-1} - 2 \leqs k \leqs |T|^{\ell-1} - 1$ then $\Pi_{\ell}$ has a unique part of size $2$, and so $Q_{(\Pi_{\ell})} = 1$ if and only if $Q = \An_{k}$. The result follows. 
\end{proof}

We are now ready to prove Theorem~\ref{thm:greedy}.

\begin{proof}[Proof of Theorem~\ref{thm:greedy}] We proved Part (i) at the end of Section~\ref{sec:k=2}. As noted in the introduction,
if $b(G) = 2$ then $\mathcal{G}(G) = 2$, so 
    Part (ii) is immediate from Theorem~\ref{t:H_diag}(ii), and it remains to prove Part (iii) for the groups with $b(G) > 2$. By Theorem~\ref{t:H_diag}(iii), if $b(G) > 2$ then either $k\in \{|T|-2, |T|-1\}$ and $Q = \Sn_k$, or $k\geqs |T|$. The cases with $k\geqs |T|+1$ are handled in Proposition~\ref{p:|T|+1}. For all other cases it is shown in \cite[Proposition~3.5]{FHLR_Saxl} that every pair of distinct points in $\Omega$ can be extended to a base of size $3$, which yields $\mathcal{G}(G) = 3 = b(G)$. Part (iii) follows. 
    
    For the final statement, if $\mathcal{G}(G) = b(G)$ then every greedy base has size $b(G)$. The only situation where this does not occur is discussed in  Proposition~\ref{p:|T|+1}, which proves that every greedy base has size $\mathcal{G}(G)$. 
\end{proof}

\section{Relational complexity}\label{sec:RC}

In this section we prove Theorems~\ref{thm:RC4} and \ref{thm:RClog}, and also show that the lower bound in Theorem~\ref{thm:RC4} is attained infinitely often.

\begin{proof}[Proof of Theorem~\ref{thm:RC4}]
	By \cite[Corollary 1.4]{GLS_binary}, there is no primitive group $G$ of diagonal type with $\mathrm{RC}(G) = 2$, so it suffices to show that $\mathrm{RC}(G)\ne 3$. Let $G$ have socle $T^k$.
    
     If $T\in\{ \An_5, \An_6\}$ and $k = 2$ then we may check the result using {\sc Magma} \cite{MAGMA}, 
    so we may assume that if $k = 2$ then $T\notin\{ \An_5, \An_6\}$ from now on. If $k = 2$ then by Theorem~\ref{t:H_diag}(i) the group $G$ has a base of size $3$, so let $x, y \in T$ be such that
    $(D,D(x,1),D(y,1))$ is a base. 
     If $k \geqs 3$ then let $(x,y)$ be a generating pair of $T$. For all $k$, let
	\begin{equation*}
	\begin{aligned}
	\alpha &= D\\
	\beta &= D(x,1,\dots,1)\\
	\gamma &= D(y,1,\dots,1)\\
	\delta_1 &= D(xy^{-1},1,\dots,1)\\
	\delta_2 &= D(y^{-1}x,1,\dots,1)
	\end{aligned}
	\end{equation*}
    be elements of $\Omega$, and define $I = (\alpha,\beta,\gamma,\delta_1)$, $J = (\alpha,\beta,\gamma,\delta_2)$. 
    
    We claim that $xy^{-1} \neq  y^{-1}x$: suppose otherwise. If $k = 2$ then this implies that $(x,x)$ fixes $\alpha$, $\beta$, and $\gamma$, which contradicts our assumption that $(\alpha,\beta,\gamma)$ is a base for $G$. If $k \geqs 3$ then $xy^{-1} = y^{-1}x$ contradicts $\langle x, y \rangle = G$. 
    Hence these five points are 
    distinct. We shall show that $I \sim_3 J$ but $I \not\sim_4 J$, from which the result will follow. 

    We first calculate
	\begin{equation*}
	\begin{aligned}
	\b^{(y^{-1}x,xy^{-1},\dots,xy^{-1})} &= D(xy^{-1}x,xy^{-1},\dots,xy^{-1}) = D(x,1,\dots,1) = \b\\
	\g^{(y^{-1}x,xy^{-1},\dots,xy^{-1})} &= D(x,xy^{-1},\dots,xy^{-1}) = D(y,1,\dots,1) = \g\\
	\delta_1^{(y^{-1}x,xy^{-1},\dots,xy^{-1})} &= D(xy^{-1}y^{-1}x,xy^{-1},\dots,xy^{-1}) = D(y^{-1}x,1,\dots,1) = \delta_2.
	\end{aligned}
	\end{equation*}
    We therefore deduce that $(\alpha, \beta, \gamma)^1 = (\alpha, \beta, \gamma)$, whilst
	\begin{equation*}
	\begin{aligned}
	&(\alpha,\beta,\delta_1)^{(x,\dots,x)}  = (\alpha,\beta,\delta_2)\\
	&(\alpha,\gamma,\delta_1)^{(y,\dots,y)} = (\alpha,\gamma,\delta_2)\\
	&(\beta,\gamma,\delta_1)^{(y^{-1}x,xy^{-1},\dots,xy^{-1})} = (\beta,\gamma,\delta_2)
	\end{aligned}
	\end{equation*}
    and so $I \sim_3 J$. 
	
    It remains to show that $I$ and $J$ are not in the same $G$-orbit. For $k = 2$ this follows from 
    our assumption that $(\alpha, \beta, \gamma)$ is a base. For $k \geqs 3$, we may assume that
    $G = W$. Then by Corollary \ref{c:stab_2-parts}(ii), $G_{\alpha, \beta} = C_{\mathrm{Aut}(T)}(x)\times P_1$ and $G_{\alpha,\gamma} = C_{\mathrm{Aut}(T)}(y)\times P_1$. From $\langle x,y\rangle = T$ we deduce that $G_{\alpha,\beta,\gamma} = P_1$. However, $\delta_1$ and $\delta_2$ lie in distinct $P_1$-orbits, whence $I \not\sim_4 J$.
\end{proof}

It is now easy to show that the bound in Theorem~\ref{thm:RC4} is attained infinitely often.

\begin{prop}
	\label{p:RC4_inf}
	There are infinitely many diagonal type groups $G$ with $\mathrm{RC}(G) = 4$.
\end{prop}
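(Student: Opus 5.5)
The natural route combines Theorem~\ref{thm:RC4} with the upper bound $\RC(G)\leqs I(G)+1$ from \cite[(1.1) and (1.2)]{GillLodaSpiga}. Since Theorem~\ref{thm:RC4} already gives $\RC(G)\geqs 4$, it suffices to exhibit infinitely many primitive diagonal type groups with $I(G)\leqs 3$. For this we look in the family where bases are small. By Theorem~\ref{t:H_diag}(ii), groups with $P\notin\{\An_k,\Sn_k\}$ have $b(G)=2$. However, irredundant bases can be much longer than minimal ones, so we need direct control of chains of point stabilisers, not just of $b(G)$.

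The cleanest choice seems to be $k=2$ with $P=1$, so that $G=T{:}\Inn(T)\cong T\times T$ acting on $T$ by $t\mapsto a^{-1}tb$. Then $G_1\cong T$ acts by conjugation, and for a tuple of points the pointwise stabiliser is $C_T(x_1,\dots,x_r)$, up to the translation by the first point. An irredundant base therefore corresponds to a strictly decreasing chain
\[
T > C_T(x_1) > C_T(x_1,x_2) > \cdots > 1
\]
of centralisers of subsets. Thus $I(G)=1+\lambda(T)$, where $\lambda(T)$ is the maximal length of such a chain.

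We then need infinitely many $T$ with $\lambda(T)\leqs 2$. That means every proper nontrivial centraliser $C_T(x)$ has the property that the centraliser of any further element inside it is either all of $C_T(x)$ or trivial. Take $T=\LL_2(q)$ with $q=2^f\geqs 4$. There the centraliser of any non-identity element is abelian: it is either a Sylow $2$-subgroup (elementary abelian of order $q$) or a cyclic torus of order $q\pm1$. Moreover, distinct such centralisers intersect trivially, since these groups form a partition of $T$.

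The chain argument then runs as follows.
\begin{itemize}
\item If $x_1\neq 1$, then $C_T(x_1)=A$ is abelian.
\item Adding $x_2$ gives $A\cap C_T(x_2)$. Since $C_T(x_2)$ is either $T$ or another member of the partition, this intersection is either $A$ (if $x_2\in A$, which is redundant) or $1$.
\end{itemize}
Hence $\lambda(T)=2$ and $I(G)=3$, which gives $\RC(G)\leqs 4$, so $\RC(G)=4$ by Theorem~\ref{thm:RC4}. The groups are pairwise non-isomorphic as $f$ varies, so there are infinitely many.

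The main obstacle is checking the irredundant base count carefully, including the first step, since $G$ is transitive and the first point can be taken as $1$. One must confirm that the $\RC\leqs I+1$ convention in \cite{GillLodaSpiga} matches the definition of $I(G)$ used here. If it is off by one, I would fall back to showing $\RC(G)\leqs 4$ directly: given $4$-subtuple-complete tuples, map three well-chosen coordinates by a single group element. The coordinates would be chosen so that their pointwise stabiliser is trivial or a single torus, and the remaining coordinates would then be matched using the trivial intersection property of the centralisers.
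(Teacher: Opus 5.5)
Your proof is correct and follows the paper's argument. It uses the same groups $G = T^2$ with $T = \LL_2(2^f)$, $f \geqs 2$, the bound $\RC(G) \leqs I(G)+1$, and the lower bound from Theorem~\ref{thm:RC4}. The only difference is how you get $I(G)=3$: you prove it directly, since centralisers of non-identity elements of $\LL_2(2^f)$ partition $T$ and so every strict chain $T > C_T(x_1) > \cdots > 1$ has length two. The paper instead cites \cite[Theorem~1.1]{LMM_ibis} for $I(G)=b(G)$ and combines it with $b(G)=3$ from Theorem~\ref{t:H_diag}. Your version is self-contained, and your fallback is unnecessary, because the paper states $\RC(G)\leqs I(G)+1$ with its own definition of $I(G)$.
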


\begin{proof}
	Let $f\geqslant 2$ be an integer and let $G = T^2$ with $T = \mathrm{L}_2(2^f)$. Then \cite[Theorem~1.1]{LMM_ibis} shows that $I(G)= b(G)$, so $I(G)  = 3$ by Theorem~\ref{t:H_diag}. Therefore the bound $\RC(G) \leqs I(G) + 1$ yields  $\mathrm{RC}(G) \leqslant I(G) + 1 = 4$. Combining with Theorem~\ref{thm:RC4} now shows that $\mathrm{RC}(G) = 4$.
\end{proof}

We conclude this section by determining a lower bound on the relational complexity of a family of groups of diagonal type.

\begin{prop}
	\label{p:RC_to_inf}
	For all $m \ge 3$ and all $k \geqs 3$, every primitive group $G = \An_{m+2}^k{:}P$ of diagonal type satisfies $\mathrm{RC}(G) \geqs m$.
\end{prop}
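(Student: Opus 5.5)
The plan is to generalise the construction in the proof of Theorem~\ref{thm:RC4}. We find tuples $I,J\in\Omega^m$ with $I\sim_{m-1}J$ but $I\notin J^G$. Then $\RC(G)>m-1$, i.e.\ $\RC(G)\geqs m$. Throughout, $T=\An_{m+2}$ acts on $[m+2]$, $\Aut(T)=\Sn_{m+2}$, and $G\cap D_0$ only involves inner automorphisms. We may enlarge $P$ to $\Sn_k$: a larger group merges more orbits, so proving $I\notin J^G$ for the largest $G$ suffices. However, the $\sim_{m-1}$ witnesses must be built inside $T^k\leqs G$ so that they work for every $P$.

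\textbf{Construction.} Put $\alpha=D$, $\beta_i=D(x_i,1,\dots,1)$ with $x_i=(i,m+1,m+2)\in T$ for $1\leqs i\leqs m-1$, and $\beta'=D(x_m,1,\dots,1)$. Set $I=(\alpha,\beta_1,\dots,\beta_{m-1})$ and $J=(\alpha,\beta_1,\dots,\beta_{m-2},\beta')$.

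\textbf{Step 1: $I\notin J^G$.} Any element fixing $\alpha,\beta_1$ has, by Lemma~\ref{l:H_diag_l:2.2} (the partition $\mathcal{P}^{\mathbf t}$ has parts of sizes $1$ and $k-1\neq 1$, since $k\geqs 3$), the form $(\varphi,\dots,\varphi)\sigma$ with $\sigma$ fixing $1$. It acts on points $D(t,1,\dots,1)$ as $t\mapsto t^\varphi$, with $\varphi$ inner, so $\varphi\in \An_{m+2}$. Thus we need that no even $g$ satisfies $x_i^g=x_i$ for $i\leqs m-2$ and $x_{m-1}^g=x_m$. Such a $g$ must fix $m+1$ and $m+2$: they are the common points of the supports, and the cyclic order is preserved. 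It must also fix $1,\dots,m-2$ and send $m-1\mapsto m$. Hence $g=(m-1,m)$, which is odd.

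\textbf{Step 2: subtuples containing $\alpha$.} If the omitted entry is the last one, the identity works. If $\beta_j$ is omitted with $j\leqs m-2$, conjugate diagonally by the even $3$-cycle $(j,m-1,m)$, realised as $(g,\dots,g)\in T^k$. It fixes $x_i$ for $i\ne j,m-1$ and sends $x_{m-1}\mapsto x_m$.

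\textbf{Step 3: the subtuple omitting $\alpha$.} This is the main obstacle. We must map $(\beta_1,\dots,\beta_{m-1})$ to $(\beta_1,\dots,\beta_{m-2},\beta')$ by an element not fixing $D$. Following the proof of Theorem~\ref{thm:RC4}, I would first try elements $(a,b,\dots,b)$ with $a,b\in T$. Since $D(t,1,\dots,1)^{(a,b,\dots,b)}=D(b^{-1}ta,1,\dots,1)$, we need $b^{-1}x_ia=x_i$ for $i\leqs m-2$ and $b^{-1}x_{m-1}a=x_m$. Writing $a=bc$, these become $x_i^{b}c=x_i^{\,}$-type conditions, i.e.\ a solvability problem in $\An_{m+2}$. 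An equivalent way to see it: translating by $x_1^{-1}$ sends $\beta_1$ to $D$, which turns the problem into one of type Step 2 for the differences $x_ix_1^{-1}$. One then finds a suitable even permutation. If the $3$-cycle choice fails here, I would adjust the $x_i$ (e.g.\ to $(i,m+1)(m+2,\cdot)$-type elements, or placing $x_{m-1}$ and $x_m$ symmetrically) so that this last case also has an even witness, while keeping the rigidity used in Step~1.
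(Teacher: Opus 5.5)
Your construction works, and Steps 1 and 2 are correct. The aside $\mathrm{Aut}(T)=\mathrm{S}_{m+2}$ fails for $m=4$, but you only use inner automorphisms, so this is harmless. In fact, after applying $(x_1^{-1},1,\dots,1)\in T^k$ and relabelling $[m+2]$, your pair $(I,J)$ coincides with the paper's pair built from $t_i=(1,2,i+1)$, up to the order of entries.

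\textbf{The gap is Step 3.} You correctly identify it as the crux, but you never carry it out: you describe a strategy and then hedge that the $x_i$ might have to be changed. This is the one subtuple whose witness cannot fix $\alpha$, since such a witness, combined with the other entries, would contradict Step 1. It is exactly where the construction could fail. Without an explicit witness you have not shown $I\sim_{m-1}J$, so the proof as written is incomplete.

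\textbf{How to close it.} Your own translation idea suffices, with no change to the $x_i$. Using the paper's right-action conventions, put $y_i=x_ix_1^{-1}$. Then $y_1=1$ and $y_i=(1,m+2,i)$ for $2\le i\le m$.
\begin{itemize}
\item Every such $y_i$ has support $\{1,m+2,i\}$, so the point $m+1$ lies in none of these supports. This free point is exactly what was missing in Step 1, where its absence forced the odd permutation $(m-1,m)$.
\item Hence the even $3$-cycle $g=(m-1,m,m+1)$ centralises $y_i$ for $i\le m-2$ and conjugates $y_{m-1}$ to $y_m$.
\item Translating back, set $b=g$ and $a=x_1^{-1}gx_1=(m-1,m,m+2)$. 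Then
\[
b^{-1}x_ia=(g^{-1}y_ig)\,x_1 ,
\]
which equals $x_i$ for $i\le m-2$ and $x_m$ for $i=m-1$.
\item By your formula $D(t,1,\dots,1)^{(a,b,\dots,b)}=D(b^{-1}ta,1,\dots,1)$, the element $(a,b,\dots,b)\in T^k\le G$ therefore maps $(\beta_1,\dots,\beta_{m-1})$ to $(\beta_1,\dots,\beta_{m-2},\beta')$, for every $P$.
\end{itemize}
For example, when $m=3$ this is $a=(2,3,5)$ and $b=(2,3,4)$, which send $(1,4,5)\mapsto(1,4,5)$ and $(2,4,5)\mapsto(3,4,5)$.

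This element plays the role of the paper's $(s_2,s_1,\dots,s_1)$. With it added, your argument is complete and is essentially the paper's proof.
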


\begin{proof}
	Let $T =  \An_{m+2}$. 
    For $2\leqslant i\leqslant m+1$, let $t_i = (1,2,i+1)\in T$. Define $\alpha_1 = D$, $\alpha_i = D(t_i,1,\dots,1)$ for $2\leqslant i\leqslant m-1$, $\beta = D(t_{m},1,\dots,1)$ and $\gamma = D(t_{m+1},1,\dots,1)$. Let $I = (\alpha_1, \ldots, \alpha_{m-1}, \beta) \in \Omega^m$ and $J = (\alpha_1, \ldots, \alpha_{m-1}, \gamma) \in \Omega^m$. We shall show that $I \sim_{m-1} J$ but $J \not\in I^G$, from which the result follows. 
	
	Since $k \geqs 3$, we can apply Corollary \ref{c:stab_2-parts} with parts of size $1$ and $k-1$ to see that $G_{\alpha_1, \alpha_i} = C_T(t_i)\times P_1$, and $C_T(t_2,\dots,t_{m-1}) = C_{T}(\An_{m}) = 1$.  Therefore $I$ and $J$ are in distinct $G$-orbits. For $1\leqs i\leqs m$, let $s_i = (i,m+1,m+2) \in T$. Then $s_1^{-1} t_i s_2 = t_i$ for $i \in \{2, \ldots, m-1\}$, whilst $s_1^{-1} t_m s_2 = t_{m+1}$, so
	\begin{equation*}
	(\alpha_2,\dots,\alpha_{m-1},\beta)^{(s_2,s_1,\dots,s_1)} = (\alpha_2,\dots,\alpha_{m-1},\gamma).
	\end{equation*}
	Similarly, one can check that for all $2\leqs i\leqs m-1$,
	\begin{equation*}
	(\alpha_1,\dots,\a_{i-1},\a_{i+1},\dots,\alpha_{m-1},\beta)^{(s_{i+1},s_{i+1},\dots,s_{i+1})} = (\alpha_1,\dots,\a_{i-1},\a_{i+1},\dots,\alpha_{m-1},\gamma),
	\end{equation*}
	so $I \sim_{m-1} J$, which completes the proof. 
\end{proof}

\begin{proof}[Proof of Theorem~\ref{thm:RClog}]
    By Proposition~\ref{p:RC_to_inf}, for all $m \geqs 3$ and $k \geqs 3$ there is a diagonal type group $G$ with $\RC(G) \geqs m$ and degree $n = |\An_{m+2}|^{k-1}$. Setting $k = 3$ and using the bounds $d^d/e^{d-1} \leqs d! \leqs d^{d+1}/e^{d-1}$ (see, for example, \cite[p. 52]{Knuth}), we get inequalities
    \begin{equation*}
        \left(\frac{(m+2)^{m+2}}{2e^{m+1}}\right)^2 \leqs n  \leqs \left(\frac{(m+2)^{m+3}}{2e^{m+1}}\right)^2,
    \end{equation*}
    so for sufficiently large $m$ we can bound
    \begin{align*}
      \log n & \leqs 2((m+3) \log (m+2) - (m+1) \log e - 1) \leqs 2m \log (m+2) \\
      \log \log n  & \geqs \log \left((2m + 4)\log (m+2) - (2m+2) \log e - 2) \right) \geqs \log (2m+4) \geqs \log (m+2)
    \end{align*} 
    and hence \[ \log n/ \log \log n \leqs 2m,\]
    as required.
\end{proof}


\end{document}